\newcommand{\newsection}[1]{\setcounter{equation}{0} \section{#1}}
\newcommand{\bea}{\begin{eqnarray}}
\newcommand{\eea}{\end{eqnarray}}
\newcommand{\clb}{\mathcal{B}}
\newcommand{\cld}{\mathcal{D}}
\newcommand{\cle}{\mathcal{E}}
\newcommand{\clf}{\mathcal{F}}
\newcommand{\clg}{\mathcal{G}}
\newcommand{\clh}{\mathcal{H}}
\newcommand{\clk}{\mathcal{K}}
\newcommand{\clm}{\mathcal{M}}
\newcommand{\cln}{\mathcal{N}}
\newcommand{\clq}{\mathcal{Q}}
\newcommand{\cls}{\mathcal{S}}
\newcommand{\clt}{\mathcal{T}}
\newcommand{\D}{\mathbb{D}}
\newcommand{\Z}{\mathbb{Z}}
\newcommand{\z}{\bm{z}}
\def \qed {\hfill \vrule height6pt width 6pt depth 0pt}
\def\textmatrix#1&#2\\#3&#4\\{\bigl({#1 \atop #3}\ {#2 \atop #4}\bigr)}
\def\dispmatrix#1&#2\\#3&#4\\{\left({#1 \atop #3}\ {#2 \atop #4}\right)}
\newcommand{\be}{\begin{equation}}
\newcommand{\ee}{\end{equation}}
\newcommand{\ben}{\begin{eqnarray*}}
\newcommand{\een}{\end{eqnarray*}}
\newcommand{\bi}{\begin{itemize}}
\newcommand{\ei}{\end{itemize}}
\newcommand{\undertilde}[1]{\underset{\widetilde{}}{#1}}
\newcommand{\ut}{\undertilde}
\newtheorem{Theorem}{\sc Theorem}[section]
\newtheorem{Lemma}[Theorem]{\sc Lemma}
\newtheorem{Proposition}[Theorem]{\sc Proposition}
\newtheorem{Corollary}[Theorem]{\sc Corollary}
\newtheorem{Definition}[Theorem]{\sc Definition}
\newtheorem{Example}[Theorem]{\sc Example}
\newtheorem{Remark}[Theorem]{\sc Remark}
\newtheorem{Remarks}[Theorem]{\sc Remarks}
\newtheorem{Note}[Theorem]{\sc Note}
\newtheorem{Question}{\sc Question}
\newtheorem{ass}[Theorem]{\sc Assumption}
\newcommand{\bt}{\begin{Theorem}}
\def\beginlem{\begin{Lemma}}
\def\beginprop{\begin{Proposition}}
\def\begincor{\begin{Corollary}}
\def\begindef{\begin{Definition}}
\def\beginexamp{\begin{Example}}
\def\beginrem{\begin{Remark}}
\def\beginq{\begin{Question}}
\def\beginass{\begin{ass}}
\def\beginnote{\begin{Note}}
\newcommand{\et}{\end{Theorem}}
\def\endlem{\end{Lemma}}
\def\endprop{\end{Proposition}}
\def\endcor{\end{Corollary}}
\def\enddef{\end{Definition}}
\def\endexamp{\end{Example}}
\def\endrem{\end{Remark}}
\def\endq{\end{Question}}
\def\endass{\end{ass}}
\def\endnote{\end{Note}}
\begin{document}

\title[Isometric dilation and commutant lifting theorem]{Isometric dilation and Sarason's commutant lifting theorem in several variables}

\author[B.K. Das]{B. Krishna Das}
\address{Department of Mathematics, Indian Institute of Technology Bombay, Powai, Mumbai, 400076, India}
\email{bata436@gmail.com, dasb@math.iitb.ac.in}

\author[S. Panja]{Samir Panja}
\address{Department of Mathematics, Indian Institute of Technology Bombay, Powai, Mumbai, 400076, India}
\email{spanja@math.iitb.ac.in, panjasamir2020@gmail.com}

\subjclass[2020]{47A20, 47A13, 47A56, 47B32, 30H10} 
\keywords{Isometric dilation, commutant lifting theorem, von Neumann inequality, Hardy space, commuting contractions, commuting isometries, bounded analytic functions.}

\begin{abstract}
The article deals with isometric dilation and commutant lifting for a class of $n$-tuples $(n\ge 3)$ of commuting contractions. We show that operator tuples in the class dilate to tuples of commuting isometries of BCL type. As a consequence of such an explicit dilation, we show that their von Neumann inequality holds on a one dimensional variety of the closed unit polydisc. On the basis of such a dilation, we prove a commutant lifting theorem of Sarason's type by establishing that every commutant can be lifted to the dilation space in a commuting and norm preserving manner. This further leads us to find yet another class of $n$-tuples $(n\ge 3)$ of commuting contractions each of which possesses isometric dilation.

\end{abstract}

\maketitle

\newsection{Introduction}

In the last century, one of the elegant results in operator theory, due to Sz.-Nagy and Foias (\cite{NF}), is that any contraction acting on a Hilbert space dilates to an isometry. That is, for a contraction $T$ acting on a Hilbert space $\clh$, there exists an isometry $V$ acting on a Hilbert space $\clk$ such that $\clh \subseteq \clk$  and 
\[ T^m = P_{\clh}V^m|_{\clh} \quad (m\in \Z_+).\]     
%Here, the dilating space $\clk$ and isometry $V$ are explicit in the sense of the classical Wold and von Neumann decomposition \cite{NF}. 
Generalizing this, Ando (\cite{Ando}) proved that for any pair of commuting contractions $(T_1, T_2)$ on $\clh$, there exists a pair of commuting isometries $(V_1, V_2)$ on $\clk$  such that $\clh \subseteq \clk$ and 
\[ T_1^{m_1} T_2^{m_2} = P_{\clh}V_1^{m_1} V_2^{m_2}|_{\clh} \quad ((m_1, m_2)\in \mathbb{Z}^2_+).\] 
The pair $(V_1,V_2)$ is called an isometric dilation of $(T_1,T_2)$. Different proofs of the Ando dilation have surfaced recently, see \cite{MB, DS, BSS, Sau}. However, further generalization of Ando's result for an $n$-tuples of commuting contractions $(n\ge 3)$ fails and a counterexample %is
was discovered in \cite{Par}. It is still not very clear why the result fails in general and it is essential to understand subtle differences between the case $n=2$ and $n>2$. %a very little has been understood about the subtlety  
As a matter of course, it is important to identify $n$-tuples of commuting contractions each of which has an isometric dilation. By an isometric dilation of an $n$-tuple of commuting contractions $(T_1,\dots,T_n)$ on $\clh$ we mean an $n$-tuple of commuting isometries $(V_1,\ldots, V_n)$ on $\clk$ such that $\clh \subseteq \clk$ and 
\[T_1^{\alpha_1}\cdots T_n^{\alpha_n} = P_{\clh}V_1^{\alpha_1} \cdots V_n^{\alpha_n}|_{\clh},\]
for all $(\alpha_1,\ldots ,\alpha_n)\in \mathbb{Z}^n_+$. A fair amount of research has been carried out to identify different classes of $n$-tuples of commuting contractions each of which possesses an isometric dilation. Often such a class is obtained by imposing certain positivity condition. For more details  see \cite{BD, BDHS, BDS, BDDP, Breh, CVPoly, CV} for the polydisc setting, and for the setting of the unit ball in $\mathbb C^n$, see \cite{MulVas, Arve}, and references therein. Also see \cite{AEM, AE} for some general domains in $\mathbb C^n$.

One of the main contributions of this article is the discovery of a new class of $n$-tuples of commuting contractions $(n\ge 3)$ each of which possesses an isometric dilation. Our dilation result emerged from the study of a commutant lifting theorem (CLT), a very close neighbour of dilation theory. Indeed, this article is as much a study of dilation theory as of CLT. Commutant lifting theorem was first studied in the context of Nevanlinna-Pick interpolation by D. Sarason in \cite{Sara}. Soon after, Sz-Nagy and Foias extended the result in a general setting and proved that if $V$ on $\clk$ is an isometric dilation of a contraction $T$ acting on $\clh$, and  $X$ is a contraction on $\clh$ such that $X T=T X$, then there exists a contraction $Y$ on $\clk$ satisfying
\[Y V=V Y,\  Y^*|_\clh=X^* \text{ and } \|Y\|=\|X\|.
\]
Because of the second identity above, $Y$ is called a {\em co-extension or lift} of $X$. This result is known as Sz-Nagy and Foias' commutant lifting theorem and it is known to be equivalent to Ando dilation (\cite{Parrott}). Multivariable analogue of Sz-Nagy and Foias' commutant lifting theorem does not hold in general. See \cite{MUl} and also see \cite{Arch, BLTT, BTV, KDT, BDB, DavLe, EP} for related results. The best result known in the polydisc setting is due to Ball, Li, Timotin and Trent (\cite{BLTT}), where the authors considered a class of $n$-tuples of commuting contractions along with their specific isometric dilations and found a necessary and sufficient condition for the commutant lifting theorem to hold. This shows that even if we restrict ourselves to a suitable class of $n$-tuples of commuting contractions and their isometric dilations, it may not always possible to lift every commutant in a commuting and norm preserving way. Therefore, it raises the following natural question: 

\vspace{0.1cm}
\textit{ 
Is there a class of $n$-tuples of commuting contractions so that each of the $n$-tuples has an isometric dilation and every commutant can be lifted to the isometric dilation space in a commuting and norm preserving way?}
%each of the elements of $\mathcal F$ possesses isometric dilation and for which Sz-Nagy and Foias type commutant lifting theorem holds? every commutant of $\mathcal F$ }
\vspace{0.1cm}

\noindent Needless to say that, the class of $n$-tuples requires to be nice enough for the above question to have a positive answer. We do not know the existence of any such class in the literature. In this article, we answer this question affirmatively by exhibiting a class and prove an exact analogue of Sz-Nagy and Foias' commutant lifting theorem, which is an another main contribution of this article. As a consequence, this leads us to identify a class of commuting contractions each of which possesses an isometric dilation. 

We now proceed to describe some of our results. This requires to set few notations and terminologies. For a Hilbert space $\clh$, we denote by $\clb(\clh)$ the space of all bounded operators acting on $\clh$. The set of all $n$-tuples of commuting contractions on $\clh$ is denoted by $\clt^n(\clh)$, that is,
\[\clt^n(\clh)=\{(T_1,\ldots,T_n):T_i\in\clb(\clh), \|T_i\|\leq 1, T_i T_j=T_j T_i,1\leq i,j \leq n\}.\] For $T=(T_1, \ldots,T_n)\in\clt^n(\clh)$, we denote by $P_T$ the product contraction of $T$, that is, $P_T:=T_1 \cdots T_n$, and
for $A\in \clb(\clh)$, we denote by $C_A$ the completely positive map on $\clb(\clh)$ induced by $A$ defined as 
\[
C_A:\clb(\clh)\to \clb(\clh),\ X\mapsto AXA^*.
\]
 For any $\gamma=(\gamma_1. \ldots, \gamma_n) \in\Z^n_+ $ and $T=(T_1, \ldots,T_n)\in\clt^n(\clh)$,
we define
\[\Delta^\gamma_{T}:=(I-C_{T_1})^{\gamma_1} \cdots (I-C_{T_n})^{\gamma_n}(I_\clh).\]
With this notation, it is easy to see that for $i\neq j$,
\[\Delta^{\bm{e}_i+\bm{e}_j}_{T}=(I-C_{T_i})(I-C_{T_j})(I_\clh)=I-T_i T^*_i-T_j T^*_j+T_i T_j T^*_i T^*_j,\]
where for $i=1, \ldots, n$,
\[\bm{e}_i=(0, \ldots,0, \underbrace{1}_{i\text{th place}} 0, \ldots, 0).\]
%{\color{red} we need to change the notations through out}
%\[\Delta^{\bm{e}_i+\bm{e}_j}_{T^*}=\mathbb{S}^{-1}_2(T_i, T_j):=I-T_i T^*_i-T_j T^*_j+T_i T_j T^*_i T^*_j,\] for all $i, j=1,\ldots, n$. 
A contraction $S$ acting on $\clh$ is pure if $S^{* n}\to 0$ in the strong operator topology (SOT).
The class of $n$-tuples of commuting contractions that we consider is defined as follows: 
\begin{align*}
\mathfrak{B}^n_0(\clh):=\big\{T\in \clt^n(\clh): \Delta^{\bm{e}_i+\bm{e}_j}_{T}=0\ \text{ for all } i\neq j \text{  and  } P_T \text{  is pure}\big\}.    
\end{align*}
For the base case $n=2$, if $(T_1, T_2)$ is a pair of commuting contractions on $\clh$ such that $T_1$ is a co-isometry and $T_2$ is a pure contraction then clearly $(T_1,T_2)\in \mathfrak{B}^2_{0}(\clh)$. In fact, it turns out that if $\clh$ is finite dimensional then an arbitrary element of $\mathfrak{B}^2_0(\clh)$ is a direct sum of pairs of the above form (see Proposition \ref{finite}). However, in infinite dimension, it is no longer the case. We have demonstrated this by finding an example in Section \ref{Sec5}. It can be shown without much difficulty that for any $T\in \mathfrak{B}^n_0(\clh)$ and for a subset $\{i_1, \ldots, i_k\}$ of $\{1, \ldots, n\}$, 
\begin{align}
\Delta^{\bm{e}_{i_1}+\cdots+\bm{e}_{i_k}}_{T}=0.
%\mathbb{S}^{-1}_k(T_{i_1}, \ldots, T_{i_k}):=\sum_{F\subseteq \{i_1, \ldots, i_k\}}
%(-1)^{|F|} T_F T^*_F=0,
\end{align}
%for all subsets $\{i_1, \ldots, i_k\}$ of the set $\{1, \ldots, n\}$, 
%where for $F=\{n_1, \ldots, n_j\}\subseteq \{i_1, \ldots, i_k\}$, $T_F:= T_{n_1}\cdots T_{n_j}$, and by convention $T_{\emptyset}:=I_\clh$. 
In other words, $\mathfrak{B}^n_0(\clh)$ is a subclass of Brehmer $n$-tuples on $\clh$ (see \cite{Breh}). Therefore, any element of $\mathfrak{B}^n_0(\clh)$ has an isometric dilation. However, we show that any $T\in \mathfrak{B}^n_0(\clh)$ has an isometric dilation of the form $(M_{\Phi_1},\dots, M_{\Phi_n})$ acting on the Hardy space $H^2_{\cle}(\mathbb D)$ for some co-efficient Hilbert space $\cle$, where 
\[
\Phi_i(z)=U_i (P_i^\perp+ z P_i) \,\, ( z\in \D)
\]
and for all $i=1,\dots,n$, $U_i\in \clb(\clh)$ is a unitary, $P_i\in \clb(\cle)$ is an orthogonal projection so that 
\begin{equation}\label{BCL}
\Phi_i(z)\Phi_j(z)=\Phi_j(z)\Phi_i(z)\ \text{ and } \Phi_1(z)\cdots\Phi_n(z)=zI_{\cle}\ \quad (z\in\mathbb D).
\end{equation}
Moreover, the tuple of isometries also belongs to the same class, that is, \[(M_{\Phi_1},\dots, M_{\Phi_n})\in \mathfrak{B}^n_0(H^2_{\cle}(\D)).\]  
In case the reader is wondering what properties are required for $U_i$ and $P_i$ so that ~\eqref{BCL} holds, see \cite{BCL, BDF} or Section \ref{Sec2} below. Here, the Hardy space $H^2_{\cle}(\D)$ is defined as 
 \[H^2_{\cle}(\D)=\{f(z)=\sum_{k=0}^\infty a_k z^k: a_k \in \cle,  z \in \D,\|f\|^2=\sum_{k=0}^\infty \|a_k\|^2 <\infty\},\]
and the isometry $M_{\Phi_i}:H^2_{\cle}(\D) \to H^2_{\cle}(\D)$ is the multiplication operator associated to $\Phi_i$ defined as 
\[M_{\Phi_i} f(z)=\Phi_i(z)f(z)\quad (f \in H^2_{\cle}(\D)).\]
It is evident from ~\eqref{BCL} that $(M_{\Phi_1},\dots, M_{\Phi_n})$ is an $n$-tuple of commuting isometries and $M_{\Phi_1}\cdots M_{\Phi_n}=M_z$, where $M_z$ is the shift on $H^2_{\cle}(\D)$. Thus, $M_z$ can be factorised as the product of the $n$-tuple $(M_{\Phi_1},\dots, M_{\Phi_n})$ and it is a remarkable result of Berger, Coburn and Lebow (\cite{BCL}) that any commuting contractive factor of $M_z$ arises in this way. For this reason, $(M_{\Phi_1},\cdots, M_{\Phi_n})$ is commonly known as a BCL $n$-tuple. Now coming back to the present context, the explicit isometric dilation of $T\in \mathfrak{B}^n_0(\clh)$ to a BCL $n$-tuple plays a pivotal role in what follows and aids to establish a sharp von Neumann type inequality. For a contraction $T\in \clb(\clh)$, the defect operator $D_T$ and the defect space $\cld_T$ of $T$ are defined as 
\[D_T:=(I-T T^*)^{1/2} \text{ and }\cld_T:= \overline{\text{ran}}\, D_T. \]
If an $n$-tuple of commuting contractions $T$ has an isometric dilation, then its von Neumann inequality holds on $\overline{\D}^n$ (see \cite{NF}), that is,
for all polynomials $p\in \mathbb{C}[z_1, \ldots, z_n]$,
\begin{align}\label{von_inequ}
    \|p(T)\|_{\clb(\clh)} \leq \sup_{\z\in \overline{\D}^n } |p(\z)|.
\end{align}
The above inequality is refined and a significant reduction in terms of the dimension of the set where the inequality holds is obtained.   
\begin{Theorem}
Let $T=(T_1,\ldots,T_n)\in \mathfrak{B}^n_0(\clh)$ and $\text{dim}\, \cld_{T_i}< \infty$ ($i=1,\ldots,n$). Then there exists a one-dimensional symmetric algebraic variety $W$ in $\mathbb{C}^n$ such that for all polynomials $p\in \mathbb{C}[z_1, \ldots, z_n]$,
\[\|p(T)\|_{\clb(\clh)} \leq \sup_{\z\in W\cap \overline{\D}^n} |p(\z)|.\]
\end{Theorem}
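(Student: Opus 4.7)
The plan is to transfer the norm estimate on $T$ to a norm estimate on its explicit BCL dilation via the dilation theorem established earlier in the paper, and then exploit the fact that on the unit circle $\mathbb{T}$ the symbols $\Phi_i$ are unitary on a \emph{finite}-dimensional coefficient space, so their joint spectrum traces out a one-dimensional algebraic set.

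First, I would invoke the dilation theorem for $\mathfrak{B}^n_0(\clh)$ to obtain a BCL $n$-tuple $(M_{\Phi_1},\ldots,M_{\Phi_n})$ on $H^2_\cle(\D)$, with $\Phi_i(z)=U_i(P_i^\perp+zP_i)$ and $\Phi_1(z)\cdots\Phi_n(z)=zI_\cle$. The coefficient space $\cle$ is constructed from the defect spaces $\cld_{T_i}$, so the hypothesis $\dim\cld_{T_i}<\infty$ forces $\dim\cle<\infty$. Since the dilation is a co-extension, for every polynomial $p\in\C[z_1,\ldots,z_n]$,
\[
\|p(T)\|_{\clb(\clh)}\ \le\ \|p(M_{\Phi_1},\ldots,M_{\Phi_n})\|_{\clb(H^2_\cle(\D))}
\ =\ \sup_{z\in\mathbb{T}}\|p(\Phi_1(z),\ldots,\Phi_n(z))\|_{\clb(\cle)},
\]
the last equality being the standard boundary-value formula for matrix-valued analytic multipliers on the Hardy space of the disc.

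Next, I would use the fact that for each $z\in\mathbb{T}$, the factor $P_i^\perp+zP_i$ is unitary, so each $\Phi_i(z)$ is a unitary on the finite-dimensional space $\cle$, and the tuple $(\Phi_1(z),\ldots,\Phi_n(z))$ is a commuting family of unitaries. By the finite-dimensional spectral theorem they are simultaneously diagonalizable, so
\[
\|p(\Phi_1(z),\ldots,\Phi_n(z))\|_{\clb(\cle)}
\ =\ \max\bigl\{|p(\lmd_1,\ldots,\lmd_n)|:(\lmd_1,\ldots,\lmd_n)\in\sigma(\Phi_1(z),\ldots,\Phi_n(z))\bigr\}.
\]
Define the set
\[
W\ :=\ \bigcup_{z\in\mathbb{T}}\sigma(\Phi_1(z),\ldots,\Phi_n(z))\ \subseteq\ \mathbb{T}^n,
\]
and observe that on the joint spectrum $\lmd_1\cdots\lmd_n=z$, so $W\subset\overline{\D}^n$ and the chain of inequalities above yields $\|p(T)\|\le\sup_{W\cap\overline{\D}^n}|p|$.

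Third, I would show that $W$ is a symmetric one-dimensional algebraic variety. The characteristic polynomials $q_i(\lmd_i,z):=\det(\lmd_i I_\cle-\Phi_i(z))$ are polynomials in $(\lmd_i,z)$ because $\Phi_i(z)$ is linear in $z$ on a finite-dimensional space, and on $W$ one has the polynomial relation $z=\lmd_1\cdots\lmd_n$. Thus $W$ is contained in the algebraic set cut out by $q_i(\lmd_i,\lmd_1\cdots\lmd_n)=0$ for $i=1,\ldots,n$, together with the simultaneous-eigenvector condition, all of which are polynomial equations in $(\lmd_1,\ldots,\lmd_n)$. The natural projection $W\to\mathbb{T}$, $(\lmd_1,\ldots,\lmd_n)\mapsto\lmd_1\cdots\lmd_n$, has fibres of cardinality at most $\dim\cle$, so $W$ is one-dimensional. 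Symmetry follows from the identity $\Phi_i(\bar z)=\Phi_i(z)^*$ on $\mathbb{T}$: if $(\lmd_1,\ldots,\lmd_n)\in\sigma(\Phi_1(z),\ldots,\Phi_n(z))$ then $(\bar\lmd_1,\ldots,\bar\lmd_n)\in\sigma(\Phi_1(\bar z),\ldots,\Phi_n(\bar z))$, so $W$ is invariant under componentwise conjugation, which on $\mathbb{T}^n$ is the inversion symmetry $(\lmd_1,\ldots,\lmd_n)\mapsto(\lmd_1^{-1},\ldots,\lmd_n^{-1})$.

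The main obstacle I expect is the third step: pinning down explicit defining polynomials for $W$, verifying globally (not merely generically) that the affine variety it generates is one-dimensional, and formulating the precise notion of ``symmetric'' that the authors intend (complex conjugation on $\mathbb{T}^n$ versus an involution induced by the BCL data). The first two steps, by contrast, are essentially an application of the dilation theorem combined with the spectral theorem in finite dimensions.
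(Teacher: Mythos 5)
Your first two steps reproduce the paper's argument exactly: dilate to the BCL tuple $(M_{\Phi_1},\dots,M_{\Phi_n})$ on $H^2_{\cle}(\D)$ with $\dim\cle<\infty$, pass to the unitary boundary values on $L^2_{\cle}(\mathbb T)$, and use simultaneous diagonalization of the commuting unitary matrices $(\Phi_1(z),\dots,\Phi_n(z))$ together with $\Phi_1(z)\cdots\Phi_n(z)=zI_{\cle}$ to reduce the norm to a supremum over joint eigenvalues. This part is correct and is the same route the paper takes.

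The genuine gap is in your third step. First, the set you define, $W=\bigcup_{z\in\mathbb T}\sigma(\Phi_1(z),\dots,\Phi_n(z))\subseteq\mathbb T^n$, cannot be the $W$ of the theorem: a subset of the torus is not a one-dimensional algebraic variety in $\mathbb C^n$ (already for $n=1$ the circle is not Zariski closed in $\mathbb C$). The paper instead defines $W=\{\z\in\mathbb C^n:\z\in\sigma(\Phi_1(\underline{\z}),\dots,\Phi_n(\underline{\z}))\}$ with $\underline{\z}=z_1\cdots z_n$, i.e.\ it extends the joint-spectrum condition off the torus to all of $\mathbb C^n$; your torus set is only $W\cap\mathbb T^n$, which is all the norm estimate needs but not what the algebraicity claim is about. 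Second, your symmetry argument does not work: the identity $\Phi_i(\bar z)=\Phi_i(z)^*$ is false (one has $\Phi_i(z)^*=(P_i^\perp+\bar zP_i)U_i^*$, not $U_i(P_i^\perp+\bar zP_i)$), and in any case on $\mathbb T^n$ the map $(\lambda_1,\dots,\lambda_n)\mapsto(1/\bar\lambda_1,\dots,1/\bar\lambda_n)$ is the identity, so verifying invariance there says nothing about the symmetry of the variety in $\mathbb C^n$, which is precisely where the content lies. The paper does not prove any of this itself: it quotes \cite[Theorem 7.3]{BKS}, which establishes that the set $W$ above is a one-dimensional symmetric algebraic variety. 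If you want a self-contained argument you would need to carry out the program you sketch (defining equations via vanishing of the maximal minors of the stacked matrices $\Phi_i(\underline{\z})-z_iI$, finiteness of the fibres of $\z\mapsto\underline{\z}$ for the dimension count, and a genuine argument for the inversion symmetry), but as written that step is not established.
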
 
This result is proved in Theorem~\ref{sharp vN inequality} below. Such a refined von Neumann inequality has been studied earlier for various other class of $n$-tuples of commuting contractions (see \cite{AgMc, BDHS, DS}) and most often such a result is obtained on the basis of finding explicit and nice enough isometric dilations.   

Using the explicit isometric dilation, as indicated above, we answer the question raised earlier and prove an analogue of Sz-Nagy and Foias' commutant lifting theorem for $\mathfrak{B}^n_0(\clh)$.

\begin{Theorem}\label{CLT--}
Let $T=(T_1,\ldots,T_n)\in \mathfrak{B}^n_0(\clh)$ and let $(M_{\Phi_1},\dots, M_{\Phi_n})\in \mathfrak{B}^n_0(H^2_{\cle}(\D))$ be an isometric dilation of $T$ for some Hilbert space $\cle$. If  $XT_i=T_iX$ ($i=1,\dots,n$) for some contraction $X\in \clb(\clh)$,  then there exists $Y\in \clb(H^2_{\cle}(\D))$ such that $Y$ is a lift of $X$, 
\[
YM_{\Phi_i}=M_{\Phi_i}Y \quad(i=1,\ldots,n), \text{ and } \|Y\|=\|X\|.
\]
\end{Theorem}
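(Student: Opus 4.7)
The plan is to reduce to the classical one-variable Sz.-Nagy--Foias commutant lifting theorem applied to the product $P_T$, and then upgrade the resulting lift to commute with each factor $M_{\Phi_i}$. Multiplying the dilation intertwinings yields that $M_z = M_{\Phi_1}\cdots M_{\Phi_n}$ on $H^2_\cle(\D)$ is an isometric dilation of the pure contraction $P_T=T_1\cdots T_n$, and $XT_i=T_iX$ for each $i$ implies $XP_T=P_TX$. The classical Sz.-Nagy--Foias theorem thus produces $Y\in\clb(H^2_\cle(\D))$ with $YM_z=M_zY$, $Y^*|_\clh=X^*$ and $\|Y\|=\|X\|$. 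Since $Y$ commutes with the shift on a vector-valued Hardy space, $Y=M_\Psi$ for some $\Psi\in H^\infty_{\clb(\cle)}(\D)$.

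The main task is then to verify that $M_\Psi M_{\Phi_i}=M_{\Phi_i}M_\Psi$ for each $i$. Being a BCL dilation, it is a co-extension, so $\clh$ is coinvariant under each $M_{\Phi_i}$ with $M_{\Phi_i}^*|_\clh=T_i^*$, and by the lift property $\clh$ is also coinvariant under $Y$ with $Y^*|_\clh=X^*$. For $h\in\clh$ one computes
\[
Y^*M_{\Phi_i}^*h \;=\; Y^*T_i^*h \;=\; X^*T_i^*h \;=\; T_i^*X^*h \;=\; M_{\Phi_i}^*Y^*h,
\]
so the commutator $C_i:=YM_{\Phi_i}-M_{\Phi_i}Y$ sends $H^2_\cle(\D)$ into $\clh^\perp$. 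Since both $Y$ and $M_{\Phi_i}$ commute with $M_z$, so does $C_i$, hence $C_i=M_{\Xi_i}$ for some $\Xi_i\in H^\infty_{\clb(\cle)}(\D)$, and the problem reduces to showing $\Xi_i\equiv 0$.

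The main obstacle is exactly this last step. To propagate commutation from $\clh$ to all of $H^2_\cle(\D)$, I would use the explicit form $\Phi_i(z)=U_i(P_i^\perp+zP_i)$ together with the canonical dilation map $\Pi:\clh\to H^2_\cle(\D)$, which satisfies $\Pi T_i^*=M_{\Phi_i}^*\Pi$ and whose range captures the action of $\Psi$ because $P_T$ is pure. Comparing Taylor coefficients in the desired identity $\Psi\Phi_i=\Phi_i\Psi$ and pinning them down via $Y^*\Pi=\Pi X^*$ produces a recursion on the coefficients of $\Psi$. The hypothesis $(M_{\Phi_1},\dots,M_{\Phi_n})\in\mathfrak{B}^n_0(H^2_\cle(\D))$ supplies the identities $\mathbb{S}^{-1}_2(M_{\Phi_i},M_{\Phi_j})=0$ which, combined with the BCL factorization $\Phi_1\cdots\Phi_n=zI_\cle$, should provide the algebraic rigidity forcing the recursion to terminate at $\Xi_i\equiv 0$. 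The delicate point is that the given dilation need not be minimal, so a priori $\clh^\perp$ could support a nonzero analytic Toeplitz cocycle; it is precisely the BCL factorization and the defining identities of $\mathfrak{B}^n_0$ that should rule this out.
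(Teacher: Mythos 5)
There is a genuine gap, and it sits exactly where you locate it yourself: the step $\Xi_i\equiv 0$. Applying the one–variable Sz.-Nagy–Foias theorem to the pair $(P_T, M_z)$ produces \emph{some} lift $Y=M_\Psi$ of $X$ commuting with $M_z$, but this lift is far from unique, and a generic choice will not commute with the individual factors $M_{\Phi_i}$. Indeed, by Proposition \ref{Form of commutant} any operator in the commutant of the full BCL tuple is forced to be block diagonal, $\oplus_{j} M_{\Theta_j}$, with respect to $H^2_{\cle_1}(\D)\oplus\cdots\oplus H^2_{\cle_n}(\D)$, whereas an arbitrary $\Psi\in H^\infty_{\clb(\cle)}(\D)$ obtained from the classical theorem has no reason to respect this decomposition. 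Your computation only shows that the commutator $C_i=YM_{\Phi_i}-M_{\Phi_i}Y$ has range in $\clh^\perp$ (identifying $\clh$ with $\Pi(\clh)$), which is consistent with $C_i\neq 0$; the identities $\mathbb{S}^{-1}_2(M_{\Phi_i},M_{\Phi_j})=0$ and $\Phi_1\cdots\Phi_n=zI_\cle$ constrain the tuple $(M_{\Phi_1},\dots,M_{\Phi_n})$, not the lift $\Psi$, so they cannot by themselves kill $\Xi_i$. What is really needed is to \emph{select} a particular lift, and that selection is the entire content of the theorem; the "recursion on Taylor coefficients" you invoke is not carried out and there is no reason to expect it terminates for the $Y$ you started with.

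The paper's proof is constructive rather than reductive. It first establishes the operator decomposition $I-XX^*=G_1+\cdots+G_n$ with $G_j=T_iG_jT_i^*$ for $i\neq j$ (Theorem \ref{Nessry_cond}), then, for each $j$, uses $G_j$ and the defect identity $D_{T_j}^2+P_TG_jP_T^*=XD_{T_j}^2X^*+G_j$ to build a colligation $R$ on $\clk_j\oplus\clg$ whose transfer function $\Theta_j=\tau_{R^*}$ satisfies $\Pi_jX^*=M_{\Theta_j}^*\Pi_j$ (Lemma \ref{Lifting_id}) and commutes with the unitaries $W_j^{(i)}$ by design (Theorem \ref{comm_lifting}); the block-diagonal operator $\oplus_jM_{\Theta_j}$ is then automatically in the commutant of the BCL tuple and is a contractive lift of $X$. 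If you want to salvage your approach, you would have to show that among all classical lifts of $X$ relative to $M_z$ there exists a block-diagonal one intertwined with $\Pi$ — which amounts to redoing the paper's construction.
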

This result is proved in Theorem~\ref{Comm_lift_theo} below. The proof heavily relies on a detailed analysis on the structure of commutants of $T$ as well as the structure of commutants of its isometric dilation $(M_{\Phi_1},\dots, M_{\Phi_n})$. For instance, we show that if $T=(T_1,\ldots,T_n)\in \mathfrak{B}^n_0(\clh)$ and $XT_i=T_iX$ for some contraction $X\in \clb(\clh)$, then 
\[
 I-X X^*=G_1 +\cdots + G_n \text{ and }  (I-C_{T_i})(G_j)=
 G_j -T_i G_j T^*_i =0
 \quad (i\neq j),
\]
where $G_i$'s are positive operators on $\clh$ (see Theorem~\ref{Nessry_cond} below). The above identity has some resemblance to the necessary and sufficient condition for CLT obtained in ~\cite{BLTT}. In fact, Theorem~\ref{CLT--} is motivated from the above identity and the result of ~\cite{BLTT}. However, the proof of the theorem is entirely different from that of ~\cite{BLTT}, partly because the isometric dilation and the dilation map are different, and one can not directly use the result of ~\cite{BLTT} in this context as well. Finally, as CLT and dilation are closely related, the above theorem provides a new class of $n$-tuples of commuting contractions such that each of the tuples has an isometric dilation. The following result is proved in Theorem~\ref{main 2} below.

\begin{Theorem}
Let $(T_1,\dots,T_n, T_{n+1})$ be an $(n+1)$-tuple of commuting contractions on $\clh$ such that $(T_1,\dots,T_n)\in \mathfrak{B}^n_0(\clh)$. Then $(T_1,\dots,T_n, T_{n+1})$ has an isometric dilation and consequently, satisfies the von Neumann inequality. 
\end{Theorem}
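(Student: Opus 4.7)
The argument is in two stages. First I apply Theorem~\ref{CLT--} to absorb $T_{n+1}$ into the setting of the BCL isometric dilation of $(T_1,\ldots,T_n)$; then I produce a further isometric dilation of the resulting lifted contraction while preserving commutation with the BCL tuple, thereby isometrically dilating the full $(n+1)$-tuple.

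By the isometric dilation theorem for $\mathfrak{B}^n_0(\clh)$ established earlier in the paper, there exist a Hilbert space $\cle$, a BCL tuple $(M_{\Phi_1},\ldots,M_{\Phi_n})\in \mathfrak{B}^n_0(H^2_\cle(\D))$ and an embedding $\clh\hookrightarrow H^2_\cle(\D)$ realising the adjoints of $T_i$ as restrictions of the $M_{\Phi_i}^*$. Since $T_{n+1}$ is a contraction commuting with every $T_i$, Theorem~\ref{CLT--} supplies a contraction $Y\in \clb(H^2_\cle(\D))$ with $Y^*|_\clh=T_{n+1}^*$ and $YM_{\Phi_i}=M_{\Phi_i}Y$ for all $i$. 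In particular $Y$ commutes with the shift $M_z=M_{\Phi_1}\cdots M_{\Phi_n}$, so $Y=M_F$ for some $F\in H^\infty_{\clb(\cle)}(\D)$ with $\|F\|_\infty\le 1$ and pointwise $F(z)\Phi_i(z)=\Phi_i(z)F(z)$. The $(n+1)$-tuple $(M_{\Phi_1},\ldots,M_{\Phi_n},M_F)$ then consists of commuting contractions on $H^2_\cle(\D)$ and co-extends $(T_1,\ldots,T_n,T_{n+1})$, so it suffices to isometrically dilate this tuple.

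For the second stage I would enlarge the coefficient space to $\cle'=\cle\oplus\cln$ for a suitable auxiliary Hilbert space $\cln$, extend each $\Phi_i$ block-diagonally as $\widetilde{\Phi_i}(z)=\Phi_i(z)\oplus\Phi_i'(z)$ so that $(\widetilde{\Phi_1},\ldots,\widetilde{\Phi_n})$ is a BCL tuple on $\cle'$ with product $zI_{\cle'}$, and construct an inner function $G\colon \D\to \clb(\cle')$ of block lower-triangular form with top-left block $F$. The block-triangularity ensures that $M_G$ is an isometric co-extension of $M_F$ (with $H^2_\cle(\D)$ coinvariant and compression $M_F^*$), and I would arrange the lower blocks of $G$ together with the $\Phi_i'$ to enforce the pointwise commutation $G(z)\widetilde{\Phi_i}(z)=\widetilde{\Phi_i}(z)G(z)$ for every $i$. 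Once $G$ is produced, $(M_{\widetilde{\Phi_1}},\ldots,M_{\widetilde{\Phi_n}},M_G)$ is a commuting tuple of isometries on $H^2_{\cle'}(\D)$; composing the embeddings $\clh\hookrightarrow H^2_\cle(\D)\hookrightarrow H^2_{\cle'}(\D)$ yields the desired isometric dilation of $(T_1,\ldots,T_n,T_{n+1})$, and the von Neumann inequality on $\overline{\D}^{n+1}$ follows by spectral calculus.

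The main obstacle lies in this second stage: simultaneously arranging that $G$ is inner, that $M_G$ co-extends $M_F$, and that each $M_{\widetilde{\Phi_i}}$ commutes with $M_G$, all while keeping $(\widetilde{\Phi_i})$ in BCL form. The commutation forces the intertwining identities $B(z)\Phi_i(z)=\Phi_i'(z)B(z)$ and $C(z)\Phi_i'(z)=\Phi_i'(z)C(z)$, where $B$ and $C$ denote the lower blocks of $G$, and these must be resolved in concert with the inner condition $G(z)^*G(z)\equiv I_{\cle'}$ on $\partial\D$. The natural way to decouple them is to model $\cln$ on the defect data of $F$ together with copies indexed by the BCL parameters of $(\Phi_i)$ and let $\Phi_i'$ act by permuting those copies, mirroring the explicit BCL isometric dilation of $\mathfrak{B}^n_0$-tuples constructed earlier in the paper.
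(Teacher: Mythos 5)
Your first stage coincides with the paper's: Theorem~\ref{Comm_lift_theo} supplies a contractive multiplier $M_\Theta$ on $H^2_{\cle}(\D)$ commuting with the dilating BCL tuple and lifting $T_{n+1}$, so that $(M_{\Phi_1},\dots,M_{\Phi_n},M_\Theta)$ is a commuting tuple of contractions co-extending $(T_1,\dots,T_n,T_{n+1})$, with the first $n$ entries isometries. The gap is in your second stage, which is a plan rather than a proof. You propose to enlarge the coefficient space to $\cle'=\cle\oplus\cln$, extend each $\Phi_i$ block-diagonally, and manufacture an inner, block lower-triangular symbol $G$ whose multiplication operator co-extends $M_\Theta$ and commutes pointwise with the extended BCL tuple; but you never produce $G$ or the auxiliary blocks. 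You yourself flag the simultaneous constraints (innerness of $G$, the intertwining identities for its off-diagonal blocks, and keeping the extended tuple in BCL form) as ``the main obstacle'' and only indicate how they ``must be resolved.'' As written, nothing guarantees that such a $G$ exists, so the commuting $(n+1)$-tuple of isometries is never exhibited and the dilation is not established.

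The paper bypasses all of this with a short abstract observation (Proposition~\ref{Lifting_pro}): if $(V_1,\dots,V_n)$ are commuting isometries and $X$ is a contraction commuting with each $V_i$, let $Y$ on $\clk=\bigvee_{k\in\Z_+}Y^k(\clh)$ be the minimal isometric dilation of $X$ and set $\tilde{V}_i\big(\sum_k Y^k h_k\big)=\sum_k Y^k V_i h_k$. The well-definedness computation
\[
\langle Y^k V_i h, Y^l V_i h'\rangle=\langle X^{k-l}V_i h, V_i h'\rangle=\langle V_i X^{k-l}h, V_i h'\rangle=\langle X^{k-l}h,h'\rangle
\]
uses only that $V_i$ is an isometry commuting with $X$; the $\tilde{V}_i$ are then commuting isometric extensions of the $V_i$ that commute with $Y$, and $(\tilde{V}_1,\dots,\tilde{V}_n,Y)$ dilates $(M_{\Phi_1},\dots,M_{\Phi_n},M_\Theta)$ and hence $(T_1,\dots,T_n,T_{n+1})$. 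No explicit inner function, enlarged BCL structure, or transfer-function realization is required. To salvage your concrete Hardy-space route you would have to actually construct $G$ and verify every one of the identities you list, which is substantially harder than the abstract argument and is not carried out in your proposal.
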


 This article is organized as follows. In the next section, we discuss some preparatory results. Section \ref{Sec3} is devoted to find an explicit isometric dilations for $\mathfrak{B}^n_0(\clh)$ and to obtain a sharper version of the von Neumann inequality for operator $n$-tuples in $\mathfrak{B}^n_0(\clh)$ with finite dimensional defect spaces. In Section \ref{Sec4}, we prove CLT for $\mathfrak{B}^n_0(\clh)$ and, as a consequence, we find a new class of $n$-tuples of commuting contractions each of which possesses an isometric dilation. Examples on $n$-tuples in $\mathfrak{B}^n_0(\clh)$ are considered in Section \ref{Sec5}.

\newsection{Preparatory results}\label{Sec2}

%In this section, we introduce the notion of isometric dilation and revisit the structure of $n$-tuple of commuting isometries whose product is pure.
%Let us begin with notations which will be used throughout this report. 
We set few definitions and conventions and also establish some preliminary results, in this section, which we need subsequently.
 All Hilbert spaces are assumed to be separable and over the complex field $\mathbb{C}$.

%In this article, we consider the following convention as a definition of isometric dilation of the tuple. 
%For an $n$-tuple $ T=(T_1,T_2,\ldots ,T_n)\in \clb(\clh)^n$ of commuting
%contraction, we also denote 
%\[
% T_F= T_{n_1}\cdots T_{n_j}\,\, \text{and} \,\, T^*_F:= T^*_{n_1}\cdots T^*_{n_j}\quad (F=\{n_1, \ldots, n_j\}\subseteq \{1, \ldots, k\})
%\]

\subsection{\textbf{Isometric dilation.}} 
 Let $T\in\clt^n(\clh)$. If there exist an $n$-tuple $V=(V_1, \ldots,V_n)$ of commuting isometries on a Hilbert space $\clk$ and an isometry  $\Pi:\clh \to \clk$ such that 
\[
\Pi T^*_i=V^*_i \Pi, \quad (i=1,\ldots,n)
\]
then 
\[(T_1,\ldots,T_n)\cong (P_{\clq}V_1|_{\clq}, \ldots,P_{\clq}V_n|_{\clq}),\]
where $\clq=\mbox{ran}\Pi$ is a joint $(V_1^*,\dots,V_n^*)$-invariant
subspace of $\clk$ and the unitary equivalence is induced by $\Pi$. Here, two tuples $(A_1, \ldots, A_n)\in \clt^n(\clh)$ and $(B_1, \ldots, B_n)\in \clt^n(\clk)$ is said to be unitarily equivalence, denoted by $ (A_1,\ldots,A_n)\cong (B_1,\ldots,B_n)$, if there exists a unitary $U:\clh \to \clk$ such that $UA_i=B_i U$ for all $i=1,\ldots,n$. Since $(V_1,\dots,V_n)$ is a co-extension of $(P_{\clq}V_1|_{\clq}, \ldots,P_{\clq}V_n|_{\clq})$ and co-extension is a special type of dilation, we say that $V$ is an isometric dilation of $T$. In such a case, the isometry $\Pi$ is called the dilation map. Moreover, $V$ is a \textit{minimal isometric dilation} of $T$ if 
\[
\clk= \bigvee_{\alpha\in \mathbb Z_+^n}V_1^{\alpha_1}\cdots V_n^{\alpha_n}(\clq).
\]Throughout the article, most of the isometric dilations we obtained are of the above form.

\subsection{\textbf{Commuting tuples in $\mathfrak{B}^n_0(\clh)$}}
 Let $T\in \clt^n(\clh)$, and let $P_T:=T_1 \cdots T_n$ be the product contraction of $T$. Recall that for all $k=2,\ldots, n$, and $\{i_1, \ldots, i_k\}\subseteq \{1, \ldots, n\}$,
\begin{align}
\Delta^{\bm{e}_{i_1}+\cdots+\bm{e}_{i_k}}_{T}=(I-C_{T_{i_1}})\cdots(I-C_{T_{i_k}})(I_\clh),
 %  \mathbb{S}^{-1}_k(T_{i_1}, \ldots, T_{i_k}):=\sum_{F\subseteq \{i_1, \ldots, %i_k\}} (-1)^{|F|} T_F T^*_F,
\end{align}
%for all subsets $\{i_1, \ldots, i_k\}$ of the set $\{1, \ldots, n\}$, 
%where for $F=\{n_1, \ldots, n_j\}\subseteq \{i_1, \ldots, i_k\}$, $T_F:= T_{n_1}\cdots T_{n_j}$, and by convention $T_{\emptyset}:=I_\clh$.
%(Note that for $F=\{1, \ldots, n\}$, $T_F=P$).
%For an instance a pair of commuting contractions $(T_1, T_2)\in\clb(\clh)^2$, 
%\[\mathbb{S}^{-1}_2(T_1, T_2)=I-T_1 T^*_1-T_2 T^*_2+T T^*.\]
and 
\begin{align*}
\mathfrak{B}^n_0(\clh):=\big\{T\in\clt^n(\clh) :\Delta^{\bm{e}_i+\bm{e}_j}_{T}=0\ \text{ for all } i\neq j \text{  and  } P_T \text{  is pure}\big\}.    
\end{align*}
%For the shake of completeness, 
We consider couple of elementary properties of $\mathfrak{B}^n_0(\clh)$ in the next two lemmas. Both the lemmas remain valid even if we do not assume $P_T$ to be pure in the definition of $\mathfrak{B}^n_0(\clh)$. The first lemma shows that any element of $\mathfrak{B}^n_0(\clh)$ satisfy Brehmer positivity.
\begin{Lemma}\label{Breh_0}
If $T=(T_1, \ldots, T_n)\in \mathfrak{B}^n_0(\clh)$, then for all $k=3,\dots, n$ and $\{i_1, \ldots, i_k\}\subseteq\{1, \ldots, n\}$, 
\begin{align*}
\Delta^{\bm{e}_{i_1}+\cdots+\bm{e}_{i_k}}_{T}=0.
   % \mathbb{S}^{-1}_k(T_{i_1}, \ldots, T_{i_k})=0.
\end{align*}
\end{Lemma}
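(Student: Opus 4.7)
The plan is to exploit a super-operator factorization of $\mathbb{S}^{-1}_k$. For each $i$, define the linear map $\Delta_i:\clb(\clh)\to\clb(\clh)$ by $\Delta_i(X):=X-T_iXT_i^*$. The first step is to observe that the commutativity of the $T_i$'s forces the $\Delta_i$'s to commute pairwise as super-operators: indeed
\[
\Delta_i\Delta_j(X)=X-T_iXT_i^*-T_jXT_j^*+T_iT_jXT_j^*T_i^*,
\]
and the symmetric expression $\Delta_j\Delta_i(X)$ coincides with it because $T_iT_j=T_jT_i$ and $T_i^*T_j^*=T_j^*T_i^*$.

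The second step is to identify $\mathbb{S}^{-1}_k(T_{i_1},\ldots,T_{i_k})$ with the composition $\Delta_{i_1}\Delta_{i_2}\cdots\Delta_{i_k}(I)$. Expanding inductively, each resulting term has the form $(-1)^{|F|}T_{n_1}\cdots T_{n_j}T_{n_j}^*\cdots T_{n_1}^*=(-1)^{|F|}T_FT_F^*$ with $F=\{n_1,\ldots,n_j\}\subseteq\{i_1,\ldots,i_k\}$, where the final equality uses that $T_F=T_{n_1}\cdots T_{n_j}$ is well-defined (by commutativity) and $T_F^*=T_{n_j}^*\cdots T_{n_1}^*$. Summing over subsets gives exactly the defining sum of $\mathbb{S}^{-1}_k$.

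The final step closes the argument. Since the $\Delta_i$'s commute, for $k\ge 3$ we may write
\[
\mathbb{S}^{-1}_k(T_{i_1},\ldots,T_{i_k})=\Delta_{i_3}\cdots\Delta_{i_k}\bigl(\Delta_{i_1}\Delta_{i_2}(I)\bigr)=\Delta_{i_3}\cdots\Delta_{i_k}\bigl(\mathbb{S}^{-1}_2(T_{i_1},T_{i_2})\bigr).
\]
By hypothesis $\mathbb{S}^{-1}_2(T_{i_1},T_{i_2})=0$, and each $\Delta_i$ is linear with $\Delta_i(0)=0$, so the whole expression vanishes.

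There is no real obstacle here; the only delicate point is verifying that the $\Delta_i$'s genuinely commute and that the iterated expansion yields exactly the stated alternating sum with the correct $T_FT_F^*$ ordering. Both hinge on the elementary fact that $T_iT_j=T_jT_i$ implies $T_i^*T_j^*=T_j^*T_i^*$, after which the argument is purely formal.
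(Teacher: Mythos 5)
Your proof is correct. It is in essence the same argument as the paper's: the paper's key identity \eqref{Sezgo_Id_1}, namely $\mathbb{S}_{k}^{-1}(T_{i_1},\ldots,T_{i_k})=\mathbb{S}_{k-1}^{-1}(T_{i_1},\ldots,T_{i_{k-1}})-T_{i_k}\,\mathbb{S}_{k-1}^{-1}(T_{i_1},\ldots,T_{i_{k-1}})\,T_{i_k}^*$, is exactly the statement that $\mathbb{S}_k^{-1}=\Delta_{i_k}\bigl(\mathbb{S}_{k-1}^{-1}\bigr)$ in your notation, and the induction in the paper is your iterated application of the $\Delta$'s to $\mathbb{S}_2^{-1}(T_{i_1},T_{i_2})=0$. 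The only methodological difference is how the factorization is justified: the paper obtains \eqref{Sezgo_Id_1} by applying Agler's hereditary functional calculus to a polynomial identity, whereas you verify the super-operator identity $\mathbb{S}^{-1}_k=\Delta_{i_1}\cdots\Delta_{i_k}(I)$ by direct algebraic expansion together with the (correctly checked) pairwise commutativity of the $\Delta_i$'s. Your route is more self-contained and elementary; the paper's buys uniformity with the other hereditary-calculus computations it performs elsewhere (e.g.\ in Lemma~\ref{S=0} and Lemma~\ref{Lemma_ness}). Either way the argument is complete.
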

\begin{proof}
 Since $\Delta_{T}^{\bm{e}_i+\bm{e}_j}=0$ for all $i \neq j$, the proof of the lemma follows from the definition of $\Delta^{\bm{e}_{i_1}+\cdots+\bm{e}_{i_k}}_{T}$, that is,
\begin{align*}
    \Delta^{\bm{e}_{i_1}+\cdots+\bm{e}_{i_k}}_{T}&=(I-C_{T_{i_1}})\cdots(I-C_{T_{i_k}})(I_\clh)\\
    &=(I-C_{T_{i_1}})\cdots(I-C_{T_{i_{k-2}}})( \Delta^{\bm{e}_{i_{k-1}}+\bm{e}_{i_{k}}}_{T}).
\end{align*}
\end{proof}

\begin{Lemma}\label{S=0}
If $T=(T_1,\ldots, T_n)\in \mathfrak{B}^n_0(\clh)$,
%be an $n$-tuple of commuting contractions such that 
%\[\mathbb{S}^{-1}_2(T_{i}, T_{j})=0\quad (i\neq j).\] 
then for any  $\emptyset\neq F\subseteq \{1, \ldots,i-1,i+1,\dots, n\}$,
%with $i\notin F$,
\[
\Delta^{(1,1)}_{(T_i, T_F)}=0,
%\mathbb{S}^{-1}_2(T_i, T_F)=0.
\] 
where for $F=\{n_1, \ldots, n_j\}\subseteq \{i_1, \ldots, i_k\}$, $T_F:= T_{n_1}\cdots T_{n_j}$.
%where $\Tilde{T}_{i_j}=T_{i_1} \dots T_{i_{j-1}} T_{i_{j+1}} \dots T_{i_k}$ for all subsets  $\{i_1, \ldots, i_k\}$ of the set $ \{1, \ldots, n\} $ $(2\leq k \leq n)$.
\end{Lemma}
\begin{proof}
We prove the lemma only for $F=\{2,\dots,n\}$ as the proof for arbitrary $F$ is similar.  Using Agler's hereditary functional calculus (\cite{AG1}) to the following identity, for $z_1, \ldots, z_n \in \D$ and $w_1, \ldots, w_n\in \D$, 
\begin{align*}
     (1-z_1\Bar{w}_1)(1-z_2 \cdots z_n\Bar{w}_1\cdots \Bar{w}_n)
    &=(1-z_1 \Bar{w}_1)\Big(\sum_{i=2}^n (1-z_i \Bar{w}_i)-\sum^n_{\underset{i< j}{i,j=2}}(1-z_i \Bar{w}_i)(1-z_j \Bar{w}_j)\\
    +\sum_{\underset{i< j< k}{i,j,k =2}}^n (1-z_i \Bar{w}_i)(1-z_j \Bar{w}_j)&(1-z_k \Bar{w}_k)- \cdots (-1)^{n-2} (1-z_2 \Bar{w}_2) (1-z_3 \Bar{w}_3)\cdots (1-z_n \Bar{w}_n)\Big),
\end{align*}
we obtain
\begin{align*}
    \Delta^{(1,1)}_{(T_i, T_F)}= \sum_{i=2}^n \Delta_{T}^{\bm{e}_1+\bm{e}_i}-\sum^n_{\underset{i< j}{i,j=2}} \Delta_{T}^{\bm{e}_1+\bm{e}_i+\bm{e}_j}+ \sum_{\underset{i< j< k}{i,j,k =2}}^n \Delta_{T}^{\bm{e}_1+\bm{e}_i+\bm{e}_j+\bm{e}_k}- \cdots (-1)^{n-2}  \Delta_{T}^{\bm{e}_1+\cdots+\bm{e}_n}.
\end{align*}
The proof now follows by Lemma~\ref{Breh_0}.
%, it follows that {\color{blue}$\Delta^{(1,1)}_{(T_i, T_F)}=0$.} 
%Due to symmetricity, we can prove similarly that for any non-empty subset $F$ of $ \{1, \ldots, n\}$ with $i\notin F$, $\mathbb{S}^{-1}_2(T_i, T_F)=0$.  
%This completes the proof.
\end{proof}

\subsection{\textbf{BCL tuple.}}
%Let us recall the Hardy space over the unit disk $\D$. 
Let $\cle$ be a Hilbert space. The space of all bounded $\clb(\cle)$-valued holomorphic functions on $\D$ is denoted by $H^\infty_{\clb(\cle)}(\D)$. For any $\Psi\in H^\infty_{\clb(\cle)}(\D)$ the associated multiplication operator $M_{\Psi}$ is a bounded operator on the Hardy space $H^2_{\cle}(\D)$ and defined as
\[M_{\Psi} f(z)=\Psi(z)f(z)\quad (f \in H^2_{\cle}(\D)).\]
A BCL $n$-tuple on $H^2_{\cle}(\D)$ is an $n$-tuple of commuting isometries $(M_{\Phi_1},\dots, M_{\Phi_n})$ on $H^2_{\cle}(\D)$ such that $M_{\Phi_1}\cdots M_{\Phi_n}=M_z$, where $M_z$ is the shift on $H^2_{\cle}(\D)$. For a unitary $U$ on $\cle$ and an orthogonal projection $P\in\clb(\cle)$, it is easy to verify that $M_\Phi$ is an isometry, where 
\begin{equation}\label{BCL1}
\Phi(z)=U (P^\perp+ z P) \,\, ( z\in \D).
\end{equation}
The Berger, Coburn and Lebow's theorem shows that any BCL $n$-tuple arises in the same way as ~\eqref{BCL1}. We recall the theorem for further use.

\begin{Theorem}[cf. \cite{BCL}]\label{BCL main}
Let $\Phi_1, \ldots, \Phi_n$ be $\clb(\cle)$-valued bounded analytic function on $\mathbb D$ for some Hilbert space $\cle$. Then $(M_{\Phi_1},\dots, M_{\Phi_n})$ on $H^2_{\cle}(\D)$ is a BCL $n$-tuple if and only if
for all $i=1,\dots,n$,
\[
\Phi_i(z)=U_i (P_i^\perp+ z P_i) \,\, ( z\in \D)
\]
for some unitaries $U_i$'s on $\cle$ and orthogonal projections $P_i$'s in $\clb(\cle)$ satisfying
\begin{itemize}
     \item[(1)] $U_i U_j=U_j U_i$ ($i, j=1,\ldots,n$);
    \item[(2)] $U_1 \cdots U_n =I_\cle$;
     \item[(3)]  $P_i + U^*_i P_j U_i=P_j + U^*_j P_i U_j \leq I_\cle (i \neq j)$; and
    \item[(4)] $P_1 +U^*_1 P_2 U_1 + \dots + U^*_1 \cdots U^*_{n-1} P_n U_1 \cdots U_{n-1}=I_\cle$.
\end{itemize}
\end{Theorem}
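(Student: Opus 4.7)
My plan is to prove both directions, starting with the easier sufficient direction.

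For sufficiency, given unitaries $U_i \in \clb(\cle)$ and orthogonal projections $P_i \in \clb(\cle)$ satisfying (1)--(4), I first observe that each $\Phi_i(z) = U_i(P_i^\perp + zP_i)$ is inner: for $z = e^{i\theta}$ the factor $P_i^\perp + e^{i\theta}P_i$ is unitary on $\cle$ (since $P_i, P_i^\perp$ are complementary orthogonal projections), and composing with the unitary $U_i$ preserves unitarity. Hence $M_{\Phi_i}$ is an isometry on $H^2_\cle(\D)$. The commutation $\Phi_i(z)\Phi_j(z) = \Phi_j(z)\Phi_i(z)$ is verified by expanding both sides as polynomials in $z$; condition (1) lets me factor $U_iU_j = U_jU_i$ on the left, after which equality of the linear coefficients is precisely condition (3), while the constant and quadratic coefficients match automatically. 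For the product identity $\Phi_1(z)\cdots\Phi_n(z) = zI_\cle$, I expand and use (2) to collect $U_1\cdots U_n = I_\cle$; condition (4) then forces the coefficient of $z$ to equal $I_\cle$ and the constant term to vanish, while higher order coefficients cancel by repeated use of (3).

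For necessity, suppose $(M_{\Phi_1},\ldots,M_{\Phi_n})$ is a BCL $n$-tuple on $H^2_\cle(\D)$. Each $M_{\Phi_i}$ is an isometry, so each $\Phi_i$ is inner. The crucial step is to show that each $\Phi_i$ is a polynomial of degree at most one. Setting $\Psi_i(z) := \prod_{j \neq i}\Phi_j(z)$, the multiplication operator $M_{\Psi_i}$ is the commuting product of isometries, hence itself an isometry, so $\Psi_i$ is also inner. The relation $\Phi_i(z)\Psi_i(z) = zI_\cle$ combined with both boundary values being isometric forces both $\Phi_i(e^{i\theta})$ and $\Psi_i(e^{i\theta})$ to be unitary on the circle; in particular $\Phi_i(e^{i\theta}) = e^{i\theta}\Psi_i(e^{i\theta})^*$. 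Writing $\Psi_i(z) = \sum_{k\geq 0}B_kz^k$, the right-hand side is $\sum_{k\geq 0}B_k^*e^{i(1-k)\theta}$; since $\Phi_i$ belongs to $H^\infty_{\clb(\cle)}(\D)$ and thus has only nonnegative Fourier modes, $B_k = 0$ for all $k \geq 2$. Thus $\Psi_i$ has degree at most $1$, and by the symmetric argument so does $\Phi_i$.

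Writing $\Phi_i(z) = A_i + zB_i$, the two-sided innerness $\Phi_i^*\Phi_i = \Phi_i\Phi_i^* = I_\cle$ on $|z|=1$ yields $A_i^*A_i + B_i^*B_i = A_iA_i^* + B_iB_i^* = I_\cle$ together with $A_i^*B_i = B_iA_i^* = 0$. Defining $U_i := \Phi_i(1) = A_i + B_i$, one checks from these relations that $U_i^*U_i = U_iU_i^* = I_\cle$, so $U_i$ is unitary; moreover $U_i^*A_i = A_i^*A_i$ and $U_i^*B_i = B_i^*B_i$ are complementary orthogonal projections on $\cle$, and setting $P_i := B_i^*B_i$ yields $\Phi_i(z) = U_i(P_i^\perp + zP_i)$. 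Finally, the relations (1)--(4) are recovered by substituting this representation back into $\Phi_i\Phi_j = \Phi_j\Phi_i$ and $\Phi_1\cdots\Phi_n = zI_\cle$ and equating coefficients of $z^0$ and $z^1$. The main obstacle is establishing the degree bound on $\Phi_i$; once that is in hand, the remaining steps are essentially computational bookkeeping.
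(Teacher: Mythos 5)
The paper does not actually prove this statement: it is recorded with the citation ``cf.\ \cite{BCL}'' and used as a known result, so there is no internal proof to compare against. Your sufficiency direction and your degree-reduction argument for necessity --- showing from $\Phi_i(e^{i\theta})=e^{i\theta}\Psi_i(e^{i\theta})^*$ and analyticity that each $\Phi_i$ is an affine inner function, then extracting $U_i=\Phi_i(1)$ and $P_i=B_i^*B_i$ --- are correct, and they form a legitimate alternative to the classical Berger--Coburn--Lebow argument, which instead works with the telescoping decomposition $I-M_zM_z^*=\sum_i M_{\Phi_1}\cdots M_{\Phi_{i-1}}(I-M_{\Phi_i}M_{\Phi_i}^*)M_{\Phi_{i-1}}^*\cdots M_{\Phi_1}^*$ restricted to the wandering subspace $\cle$ of constants.

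The gap is in your final sentence. Equating only the $z^0$ and $z^1$ coefficients of $\Phi_i\Phi_j=\Phi_j\Phi_i$ and of $\Phi_1\cdots\Phi_n=zI_\cle$ does not yield conditions $(3)$ and $(4)$. Commutativity alone gives the \emph{equality} $P_i+U_i^*P_jU_i=P_j+U_j^*P_iU_j$, but not the bound $\leq I_\cle$: that bound is equivalent to the orthogonality $P_j\,(U_iP_iU_i^*)=0$ (see part (i) of Remarks~\ref{Remark_BCL}), and it genuinely fails for commuting affine inner functions in general --- take $\cle=\C$, $\Phi_1=\Phi_2=z$, where $P_1+U_1^*P_2U_1=2$. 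The inequality must therefore come from the product relation, and the cleanest route is to reapply your own degree argument to the sub-product $\Phi_i\Phi_j$: since $M_{\Phi_i}M_{\Phi_j}$ and $\prod_{k\neq i,j}M_{\Phi_k}$ are commuting isometric factors of $M_z$, the function $\Phi_i\Phi_j$ is also of degree at most one, so its $z^2$-coefficient $U_iP_iU_jP_j$ vanishes, which is exactly the missing orthogonality. Condition $(4)$ requires still more: the $n$ conjugated projections appearing there must first be shown mutually orthogonal before the vanishing of the constant term of $\prod_i\Phi_i$ can be converted into the statement that they sum to $I_\cle$, and the pairwise relations alone do not immediately give this because different conjugating words intervene. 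A clean repair is an induction on the partial products $\Phi_1\cdots\Phi_k$, each of which is again affine inner by the same degree argument, yielding $W_k=U_1\cdots U_k$ and $\Pi_k=\sum_{i\leq k}(U_{i+1}\cdots U_k)^*P_i(U_{i+1}\cdots U_k)$, with $(4)$ obtained at $k=n$. So the skeleton of your proof is right, but the ``computational bookkeeping'' conceals the one place where the hypothesis $\Phi_1\cdots\Phi_n=zI_\cle$ actually enters conditions $(3)$ and $(4)$.
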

It is evident from the above theorem that a BCL $n$-tuple is completely determined by a triple $(\cle, \undertilde{U}, \undertilde{P})$, where $\cle$ is a Hilbert space, $\undertilde{U}=(U_1,\dots,U_n)$ is an $n$-tuple of unitaries on $\cle$, $\undertilde{P}=(P_1,\dots,P_n)$ is an $n$-tuple of projections in $\clb(\cle)$ and $U_i$'s and $P_i$'s satisfy conditions $(1)- (4)$ in the above theorem. \textit{For the rest of the article,} we use a triple $(\cle, \undertilde{U}, \undertilde{P})$ to refer to a BCL $n$-tuple and it would always inherit the meaning that the commuting unitary tuple $\ut{U}=(U_1,\dots, U_n)$ and the projection tuple $\ut{P}=(P_1,\dots,P_n)$ satisfy conditions $(1)- (4)$, and the corresponding BCL $n$-tuple is given by $\Phi_i(z)=U_i (P_i^\perp+ z P_i) \,\, ( z\in \D, i=1,\dots,n)$.  Few remarks are in order.
\begin{Remarks}\label{Remark_BCL}
\begin{itemize}
\item[(i)] Condition \textup{$(3)$} in the above theorem implies mutual orthogonality of the projections $P_i$ and $U^*_i P_j U_i$ ($i\neq j$). That is, 
\[P_i (U^*_i P_j U_i)=(U^*_i P_j U_i) P_i=0,\]
which is also equivalent to 
\[
(U_iP_iU_i^*)P_j= P_j(U_iP_iU_i^*)=0.
\]
 \item[(ii)] Conditions \textup{$(2)$} and \textup{$(4)$}, in the above theorem, together imply that
%product isometry $M_{\Phi_1}\cdots M_{\Phi_n}$ of the tuple $(M_{\Phi_1}, \ldots, M_{\Phi_n})\in \clb(H^2_{\cle}(\D))^n$ is pure. Indeed, 
    \[M_{\Phi_1}\cdots M_{\Phi_n} =M_z.\]
\item[(iii)]
For $n=2$, if $(M_{\Phi_1}, M_{\Phi_2})$ is a BCL pair corresponding to $(\cle, \ut{U},\ut{P})$. Then $U_2$ and $P_2$ can be recovered from $U_1$ and $P_1$. Indeed, by condition \textup{$(2)$}, $U_2=U_1^*$ and by condition \textup{$(4)$}, $P_2=I-U_1 P_1 U^*_1$. Thus, the BCL pair $(M_{\Phi_1}, M_{\Phi_2})$ is completely determined by $(\cle, U, P)$, where $U=U_1$ and $P=I-U_1 P_1 U^*_1$, and consequently, $\Phi_1$ and $\Phi_2$ can be written as 
\[\Phi_1(z)=(P+ z P^\perp)U \,\, \textit{and}\,\, \Phi_2(z)=U^*(P^\perp+ z P)\quad ( z\in \D).\]

%\item We have more identities     
%\begin{align*}\label{Pure_Id}
  %   P_{i_1} +U^*_{i_1} P_{i_2} U_{i_1} + \dots + U^*_{i_{n-1}} \dots U^*_{i_1} P_{i_n} U_{i_1} \dots U_{i_{n-1}}=I_\cle,
 %\end{align*}
% where $\{i_1, \ldots i_n\}$ ranges over all permutations $\{1, \ldots, n\}.$
\end{itemize}
\end{Remarks}

\begin{Lemma}\label{Note_BCL}
Let $(M_{\Phi_1}, \ldots, M_{\Phi_n})$ be a BCL $n$-tuple corresponding to $(\cle,\ut{U},\ut{P})$. Then for all $i=1,\dots,n$, the orthogonal projection $(I_{H^2_{\cle}(\D)}-M_{\Phi_{i}}M^*_{\Phi_{i}})$ has the block matrix representation
\[
\begin{bmatrix}
U_iP_iU_i^* & 0\\
0& 0
\end{bmatrix}: \cle\oplus zH^2_{\cle}(\D) \to \cle\oplus zH^2_{\cle}(\D)
\]
with respect to the decomposition $H^2_{\cle}(\D)=\cle\oplus zH^2_{\cle}(\D)$.
\end{Lemma}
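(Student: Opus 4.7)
The plan is to exploit the isometry of $M_{\Phi_i}$ (guaranteed by Theorem~\ref{BCL main}) so that $I_{H^2_{\cle}(\D)} - M_{\Phi_i}M_{\Phi_i}^*$ becomes the orthogonal projection onto $(\operatorname{ran} M_{\Phi_i})^\perp$. With respect to the decomposition $H^2_{\cle}(\D) = \cle \oplus zH^2_{\cle}(\D)$, I want to show two things: that this projection kills the second summand, and that on the first summand it acts as $U_iP_iU_i^*$.

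For the first claim, I would observe that $M_{\Phi_1}\cdots M_{\Phi_n} = M_z$ (condition~(2) combined with the factorization noted in Remark~\ref{Remark_BCL}\,(ii)), and since the $M_{\Phi_j}$'s commute, one may pull $M_{\Phi_i}$ to the front and write $M_z = M_{\Phi_i}\big(\prod_{j\neq i} M_{\Phi_j}\big)$. Consequently
\[
zH^2_{\cle}(\D) = \operatorname{ran} M_z \subseteq \operatorname{ran} M_{\Phi_i},
\]
so $(I - M_{\Phi_i}M_{\Phi_i}^*)|_{zH^2_{\cle}(\D)} = 0$. This also shows the off-diagonal blocks vanish: since the range of a projection is invariant, the projection maps $\cle$ back into $\cle$ modulo the kernel piece in $zH^2_{\cle}(\D)$, but I will make this precise by the direct computation in the next step.

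For the second claim, I would compute the action on a constant $e \in \cle \subset H^2_{\cle}(\D)$ directly. Using $\Phi_i(z) = U_iP_i^\perp + zU_iP_i$, for any $f = \sum_{k\ge 0} a_k z^k \in H^2_{\cle}(\D)$,
\[
\langle M_{\Phi_i}^* e, f\rangle = \langle e, \Phi_i f\rangle = \langle e, U_iP_i^\perp a_0\rangle = \langle P_i^\perp U_i^* e, a_0\rangle,
\]
so $M_{\Phi_i}^* e = P_i^\perp U_i^* e \in \cle$. Multiplying by $\Phi_i$,
\[
M_{\Phi_i}M_{\Phi_i}^* e = U_i(P_i^\perp + zP_i)P_i^\perp U_i^* e = U_iP_i^\perp U_i^* e,
\]
since $P_iP_i^\perp = 0$. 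Therefore
\[
(I - M_{\Phi_i}M_{\Phi_i}^*)e = e - U_iP_i^\perp U_i^* e = U_i(I - P_i^\perp)U_i^* e = U_iP_iU_i^* e,
\]
which lies in $\cle$ and gives the stated $(1,1)$ block.

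Combining the two steps, the projection is block-diagonal of the claimed form. There is no real obstacle here; the one subtlety is justifying that the $(1,2)$ and $(2,1)$ entries vanish, but this follows automatically once one shows that $zH^2_{\cle}(\D)$ lies in the range of $M_{\Phi_i}$ (so that the projection sends $\cle$ into $\cle$ and annihilates $zH^2_{\cle}(\D)$).
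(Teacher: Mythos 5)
Your proof is correct and follows essentially the same route as the paper: first observe $zH^2_{\cle}(\D)\subseteq \operatorname{ran} M_{\Phi_i}$ from the factorization $M_z=M_{\Phi_i}\prod_{j\neq i}M_{\Phi_j}$, then compute $(I-M_{\Phi_i}M_{\Phi_i}^*)\eta=U_iP_iU_i^*\eta$ on constants $\eta\in\cle$. The only cosmetic difference is that you verify $M_{\Phi_i}^*\eta=P_i^\perp U_i^*\eta$ by pairing against a general $f$, which the paper leaves implicit.
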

\begin{proof}
Since $M_{\Phi_i}$ is an isometry, $(I_{H^2_{\cle}(\D)}-M_{\Phi_{i}}M^*_{\Phi_{i}})$ is a projection onto $H^2_{\cle}(\D)\ominus M_{\Phi_{i}}H^2_{\cle}(\D)$. Also since $M_z=M_{\Phi_1}\cdots M_{\Phi_n}$, $zH^2_{\cle}(\D)\subseteq \text{Ran} M_{\Phi_i}$, and therefore \[(I_{H^2_{\cle}(\D)}-M_{\Phi_{i}}M^*_{\Phi_{i}})(z H^2_{\cle}(\D) )=0.\]
On the other hand, for $\eta\in\cle$
\begin{align*}
(I_{H^2_{\cle}(\D)}-M_{\Phi_i}M_{\phi_i}^*)\eta =\eta-(U_iP_i^{\perp}+ zU_iP_i)(P_i^{\perp}U_i^*\eta)=(I_{\cle}-U_iP_i^{\perp}U_i^*)\eta=U_i P_i U^*_i\eta.
\end{align*}
This completes the proof.
%We can observe that both U and P can be lifted to $H^2_{\cle}(\D)$ and that they commute with $M_z$. Now, 
%\begin{align*}
 %   H^2_{\cle}(\D)\ominus M_{\Phi_{i}}H^2_{\cle}(\D)&=\big(U_iP_i^\perp U^*_i H^2_{\cle}(\D)\oplus U_i P_iU^*_iH^2_{\cle}(\D)\big)\ominus \big(U_iP_i^\perp U^*_i H^2_{\cle}(\D) \oplus z U_iP_iU^*_i H^2_{\cle}(\D)\big)\\
  %  &=U_i P_i U^*_i(H^2_{\cle}(\D)\ominus zH^2_{\cle}(\D))\\
  %  &=U_i P_i U^*_i(\cle).
%\end{align*}
%Therefore, for $\eta \in \cle$, $(I_{H^2_{\cle}(\D)}-M_{\Phi_{i}}M^*_{\Phi_{i}})(\eta)=U_i P_i U^*_i(\eta)$. Hence, the lemma is proved.
 \end{proof}

%Note that for $n=2$, any BCL pairs can be interpreted by taking $U_1=U^*$ and $P_2=P$ as follows
%\[\Phi_1(z)=(P+ z P^\perp)U^* \,\, \textit{and}\,\, \Phi_2(z)=U(P^\perp+ z P)\,\, ( z\in \D).\]

%\begin{Remark}\label{BCL_Matrix}
\iffalse
    \item Let $\cle_1=\text{Ran}\, (P)$. Since $U P=P U$, therefore, $\cle_1$ is an $U$-reducing subspace of $\cle$.  Set $U_1:= U|_{\cle_1}$ and $U_2:= U|_{\cle^\perp_1}$. Therefore, $M_\Phi, M_\Psi : H^2_{\cle_1}(\D) \oplus H^2_{\cle^\perp_1}(\D) \to H^2_{\cle_1}(\D) \oplus H^2_{\cle^\perp_1}(\D)$ are of the following forms 
\[M_\Phi=\begin{bmatrix}
I_{H^2(\D)}\otimes U_1^* & 0\\
0 & M_z\otimes U_2^*
\end{bmatrix} \,\, \text{and} \,\, M_\Psi=\begin{bmatrix}
M_z\otimes U_1 & 0\\
0 & I_{H^2(\D)}\otimes U_2 \end{bmatrix}.\]
\fi

Let $(M_{\Phi_1}, \ldots, M_{\Phi_n})$ be a BCL $n$-tuple corresponding to $(\cle,\ut{U},\ut{P})$. Suppose that $U_i P_j=P_j U_i$ for all $i,j=1,\dots,n$.
Then it follows from the condition \textup{$(4)$} in Theorem~\ref{BCL main} that 
\[P_1+P_2+\cdots+ P_n=I_{\cle}.\] In other words, $P_i$'s are mutually orthogonal and $\cle=\oplus_{i=1}^n \cle_i$, where $\cle_i=\text{ran}\, P_i$ $(i=1,\ldots,n$). Since $U_i P_j=P_j U_i$, therefore, $\cle_i$ is an $U_j$-reducing subspace of $\cle$.  Set $U^{(i)}_j:= U_j|_{\cle_i}$ for all $i,j=1,\dots,n$. Then it is clear that with respect to the decomposition $H^2_{\cle}(\D)=H^2_{\cle_1}(\D)\oplus\cdots \oplus H^2_{\cle_n}(\D)$, $M_{\Phi_i}$ has the following block diagonal representation:
\begin{equation}\label{BCL_Matrix}
M_{\Phi_i}=\begin{bmatrix}
I_{H^2(\D)}\otimes U^{(1)}_i & 0& \dots &0&\dots&0\\
0& I_{H^2(\D)}\otimes U^{(2)}_i &\dots &0 &\dots &0\\
\vdots &\vdots &\ddots &\vdots &\ddots &\vdots \\
0 &0&\dots &M_z\otimes U^{(i)}_i &\dots &0\\
\vdots &\vdots &\ddots & \vdots &\ddots &\vdots\\
0 &0 &\dots &0 &\dots &  I_{H^2(\D)}\otimes U^{(n)}_i
\end{bmatrix},
\end{equation}
for all $i=1,\dots,n$. The above block diagonal representation will be used many times in the sequel as most of the the BCL $n$-tuples, in this paper, will have the property that $U_i P_j=P_j U_i$ for all $i,j=1,\dots,n$. Finally, it follows from the commutativity of $U_i$'s that $(M_{\Phi_1}, \ldots, M_{\Phi_n})$ is doubly commuting, that is, $M_{\Phi_i}^* M_{\phi_j}= M_{\Phi_j}M_{\Phi_i}^*$ for all $i\neq j$. We summarise this in the next lemma for future reference.

%\end{Remark}

\begin{Lemma}\label{BCL:DC}
Let $(M_{\Phi_1}, \ldots, M_{\Phi_n})$ be a BCL $n$-tuple corresponding to $(\cle,\ut{U},\ut{P})$ such that $U_i P_j=P_j U_i$ for all $i,j=1,\ldots,n$. Then $(M_{\Phi_1}, \ldots, M_{\Phi_n})$ is doubly commuting. 
\end{Lemma}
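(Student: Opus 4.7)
The plan is to reduce the claim to the block-diagonal representation displayed in~\eqref{BCL_Matrix}, which is already in force under the hypothesis $U_iP_j=P_jU_i$. First I would record the consequences of this commutativity assumption: condition $(4)$ of Theorem~\ref{BCL main} collapses to $P_1+\cdots+P_n=I_{\cle}$, so the $P_i$'s are mutually orthogonal projections and $\cle=\cle_1\oplus\cdots\oplus\cle_n$ with $\cle_k=\operatorname{ran}P_k$. Since each $\cle_k$ is reducing for every $U_j$, the restrictions $U_j^{(k)}=U_j|_{\cle_k}$ are well-defined commuting unitaries on $\cle_k$ (for all $j$, $k$), and each $M_{\Phi_i}$ has the block-diagonal form~\eqref{BCL_Matrix}.

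Next, to establish $M_{\Phi_i}^*M_{\Phi_j}=M_{\Phi_j}M_{\Phi_i}^*$ for fixed $i\neq j$, I would compute both sides blockwise on $H^2_{\cle}(\D)=\bigoplus_{k=1}^n H^2_{\cle_k}(\D)$. On the $k$-th block with $k\notin\{i,j\}$, both $M_{\Phi_i}$ and $M_{\Phi_j}$ act as $I_{H^2(\D)}\otimes U_i^{(k)}$ and $I_{H^2(\D)}\otimes U_j^{(k)}$ respectively, so both products reduce to $I_{H^2(\D)}\otimes U_j^{(k)}(U_i^{(k)})^*$, which equals $I_{H^2(\D)}\otimes (U_i^{(k)})^*U_j^{(k)}$ by commutativity of $U_i^{(k)}$ and $U_j^{(k)}$. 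On the $i$-th block, $M_{\Phi_i}$ acts as $M_z\otimes U_i^{(i)}$ while $M_{\Phi_j}$ acts as $I_{H^2(\D)}\otimes U_j^{(i)}$, and both products come out as $M_z^*\otimes U_j^{(i)}(U_i^{(i)})^*$ because $M_z$ and $I_{H^2(\D)}$ trivially commute and $U_i^{(i)}$, $U_j^{(i)}$ commute. The $j$-th block is handled symmetrically.

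Since the two operators agree on every summand of the orthogonal decomposition, they agree globally, which gives $M_{\Phi_i}^*M_{\Phi_j}=M_{\Phi_j}M_{\Phi_i}^*$ for all $i\neq j$. There is no real obstacle here beyond keeping the block bookkeeping tidy; the only substantive input is that the commuting assumption $U_iP_j=P_jU_i$ forces the $P_i$'s to be mutually orthogonal, which in turn makes each $M_{\Phi_i}$ block-diagonal. Once this is in place, doubly-commutativity is an immediate tensor-factor calculation.
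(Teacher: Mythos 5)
Your proposal is correct and follows essentially the same route as the paper: the paper's argument is precisely the discussion preceding the lemma, namely that $U_iP_j=P_jU_i$ collapses condition $(4)$ of Theorem~\ref{BCL main} to $P_1+\cdots+P_n=I_{\cle}$, forces the $P_i$'s to be mutually orthogonal, yields the block-diagonal form~\eqref{BCL_Matrix}, and then double commutativity follows from the commutativity of the $U_i$'s. Your blockwise verification just spells out the last step, which the paper leaves implicit.
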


\iffalse
The following lemma is proved by  H. Bercovici,  R. Douglas and C. Foias in \cite{BDF}.

\begin{Lemma}\cite{BDF}
Let $M=(M_{\Phi_1}, \ldots, M_{\Phi_n})\in \clb(H^2_{\cle}(\D))^n$
 be a BCL $n$-tuple corresponding to $(\cle, \ut{U},\ut{P})$, and let $\Phi(z)=U(P^\perp +z P)$ ($z\in \D$) for some unitary $U$ on $\cle$ and  a projection $P$ in $\clb(\cle)$.  Then the following are equivalent.
\begin{itemize}
    \item[(1)] $M_{\Phi}=M_{\Phi_i} M_{\Phi_j}(=M_{\Phi_j}M_{\Phi_i})$;
    \item[(2)] $U=U_i U_j(=U_j U_i)$ and $P=P_j + U^*_j P_i U_j(=P_i + U_i^* P_j U_i)$.
\end{itemize}
\end{Lemma}
\fi
We now characterize BCL $n$-tuples on $H^2_{\cle}(\D)$ which are also in $\mathfrak{B}^n_0(H^2_{\cle}(\D))$. The result is proved for the case $n=2$ in \cite[Proposition 3.1]{HQY} and we generalize it here for arbitrary $n$.
\begin{Proposition}\label{main_Characterization}
Let $M=(M_{\Phi_1}, \ldots, M_{\Phi_n})$ on $H^2_{\cle}(\D)$ be a BCL $n$-tuple corresponding to $(\cle,\ut{U}, \ut{P})$. Then the following are equivalent. 
\begin{itemize}
\item[(i)] $M\in \mathfrak{B}^n_0(H^2_{\cle}(\D))$;
\item[(ii)] $U_i P_j=P_j U_i$ for $i, j =1,\ldots, n$.
    
\end{itemize}
\end{Proposition}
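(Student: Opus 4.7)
\textbf{Proof Plan for Proposition~\ref{main_Characterization}.}

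The plan is to reduce the membership condition $M\in \mathfrak{B}^n_0(H^2_{\cle}(\D))$ to a single projection identity and then translate that identity into the stated commutation relation. First, purity of the product is free: by Remark~\ref{Remark_BCL}(ii), $M_{\Phi_1}\cdots M_{\Phi_n}=M_z$, which is a pure isometry on $H^2_{\cle}(\D)$. Hence $M\in \mathfrak{B}^n_0(H^2_{\cle}(\D))$ is equivalent to $\mathbb{S}^{-1}_2(M_{\Phi_i},M_{\Phi_j})=0$ for all $i\neq j$, and the whole problem concerns the defect-type operators. Using the identity~\eqref{Sezgo_Id_1} for $k=2$, I can write
\[
\mathbb{S}^{-1}_2(M_{\Phi_i},M_{\Phi_j})=\bigl(I-M_{\Phi_i}M_{\Phi_i}^*\bigr)-M_{\Phi_j}\bigl(I-M_{\Phi_i}M_{\Phi_i}^*\bigr)M_{\Phi_j}^*.
\]
Since $M_{\Phi_j}$ is an isometry, both summands are orthogonal projections, so vanishing of their difference is equivalent to equality of the two projections, and in turn to equality of their ranges.

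Next, I would pin down these ranges using Lemma~\ref{Note_BCL}. Setting $Q_i:=U_iP_iU_i^*$, Lemma~\ref{Note_BCL} gives that $I-M_{\Phi_i}M_{\Phi_i}^*$ is the projection onto the constant subspace $Q_i\cle\subseteq\cle\subseteq H^2_{\cle}(\D)$. Hence $M_{\Phi_j}(I-M_{\Phi_i}M_{\Phi_i}^*)M_{\Phi_j}^*$ is the projection onto $\Phi_j(z)\,Q_i\cle$, which splits using $\Phi_j(z)=U_j P_j^{\perp}+z U_j P_j$ into a constant part $U_jP_j^{\perp}Q_i\cle$ and a degree-one part $zU_jP_jQ_i\cle$. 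Requiring the latter subspace to coincide with $Q_i\cle$ forces, on the one hand, $U_jP_jQ_i=0$ (killing the $z$-component), and on the other hand, $U_jP_j^{\perp}Q_i\cle=Q_i\cle$.

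The first requirement is automatic: by Remark~\ref{Remark_BCL}(i), $P_jQ_i=0$ whenever $i\neq j$. The same fact yields $P_j^\perp Q_i=Q_i$, so the second requirement simplifies to $U_jQ_i\cle=Q_i\cle$, i.e.\ $U_j$ commutes with the projection $Q_i$. Expanding $Q_i=U_iP_iU_i^*$ and using the commutativity $U_iU_j=U_jU_i$ from Theorem~\ref{BCL main}(1), the identity $U_jQ_i=Q_iU_j$ is readily seen to be equivalent to $U_jP_i=P_iU_j$. Thus I will have shown that $\mathbb{S}^{-1}_2(M_{\Phi_i},M_{\Phi_j})=0$ for all $i\neq j$ if and only if $U_iP_j=P_jU_i$ for all $i\neq j$.

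To close the gap between ``$i\neq j$'' and ``all $i,j$'' in (ii), I would argue as follows. Assume $U_iP_j=P_jU_i$ for all $i\neq j$. Then in each term $U_1^*\cdots U_{k-1}^* P_k U_1\cdots U_{k-1}$ of condition~(4) of Theorem~\ref{BCL main}, the factors $U_i$ with $i\neq k$ commute past $P_k$ and cancel, so condition~(4) collapses to $P_1+\cdots+P_n=I$. Consequently,
\[
U_iP_i=U_i\Bigl(I-\sum_{j\neq i}P_j\Bigr)=U_i-\sum_{j\neq i}P_jU_i=\Bigl(I-\sum_{j\neq i}P_j\Bigr)U_i=P_iU_i,
\]
so the diagonal commutation relations follow from the off-diagonal ones, completing the equivalence. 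The only delicate point, which I view as the main obstacle, is the range calculation for $M_{\Phi_j}(I-M_{\Phi_i}M_{\Phi_i}^*)M_{\Phi_j}^*$ and the correct use of Remark~\ref{Remark_BCL}(i) to discard the $z$-component; once this geometric picture is clear, the rest is straightforward algebra.
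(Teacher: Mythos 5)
Your proposal is correct, and it takes a noticeably more self-contained route than the paper. The paper proves $(i)\Rightarrow(ii)$ in two stages: the diagonal relations $U_iP_i=P_iU_i$ are obtained by collapsing $M$ to the BCL pair $(M_{\Phi_i},\tilde M_{\Phi_i})$, invoking Lemma~\ref{S=0} and then citing \cite[Proposition 3.1]{HQY}; the off-diagonal relations come from evaluating $\mathbb{S}^{-1}_2(M_{\Phi_i},M_{\Phi_j})$ on the constants via Lemma~\ref{Note_BCL}. For $(ii)\Rightarrow(i)$ the paper passes through double commutativity (Lemma~\ref{BCL:DC}) to factor $\mathbb{S}^{-1}_2$ as a product of projections with mutually orthogonal ranges. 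You instead observe that $\mathbb{S}^{-1}_2(M_{\Phi_i},M_{\Phi_j})$ is a difference of two orthogonal projections, so its vanishing is equivalent to equality of their ranges; computing both ranges with Lemma~\ref{Note_BCL} and Remarks~\ref{Remark_BCL}(i) yields the single equivalence $\mathbb{S}^{-1}_2(M_{\Phi_i},M_{\Phi_j})=0\iff U_jP_i=P_iU_j$ for $i\neq j$, which settles both implications at once (the symmetry of $\mathbb{S}^{-1}_2$ in $i,j$ gives the relation with the roles of $i$ and $j$ exchanged as well). Your derivation of the diagonal relations from the off-diagonal ones --- collapsing condition (4) of Theorem~\ref{BCL main} to $P_1+\cdots+P_n=I_{\cle}$ and then writing $U_iP_i=U_i(I-\sum_{j\neq i}P_j)=P_iU_i$ --- is sound and eliminates the appeal to \cite{HQY} and to Lemma~\ref{S=0} entirely; this is the main genuine gain of your approach, at the cost of being slightly less explicit than the paper's coordinate computation. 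The one point worth spelling out in a final write-up is the range identification $\operatorname{ran}\bigl(M_{\Phi_j}(I-M_{\Phi_i}M_{\Phi_i}^*)M_{\Phi_j}^*\bigr)=M_{\Phi_j}\bigl(U_iP_iU_i^*\cle\bigr)$ and the fact that a subspace of $H^2_{\cle}(\D)$ consisting of constants forces the $z$-coefficient $U_jP_jU_iP_iU_i^*\eta$ to vanish, but both steps are exactly as you describe.
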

\textit{Proof of} $(i)\implies(ii).$ 
For a fix $i$, set  $\Tilde{M}_{\Phi_{i}}=M_{\Phi_{1}} \dots M_{\Phi_{i-1}} M_{\Phi_{i+1}} \dots M_{\Phi_{n}}$. Then, $(M_{\Phi_i}, \Tilde{M}_{\Phi_{i}})$ is a BCL pair and by Lemma \ref{S=0},
$\Delta^{(1,1)}_{(M_{\phi_i}\Tilde{M}_{\Phi_{i}})}=0$.
 
%$\mathbb{S}^{-1}_2(M_{\Phi_{i}}, \Tilde{M}_{\Phi_{i}})=0$. 
Therefore, by applying \cite[Proposition 3.1]{HQY} for the BCL pair $(M_{\Phi_i}, \Tilde{M}_{\Phi_{i}})$, we get $U_i P_i =P_i U_i$ for all $i=1,\ldots,n$.  
Now, for $i\neq j$, it follows from Lemma\ref{Note_BCL} that for $\eta \in \cle$,
\begin{align*}
&\Delta_{M}^{\bm{e}_i+\bm{e}_j} \eta\\
&=\Big ((I_{H^2_\cle(\D)}- M_{\Phi_i} M^*_{\Phi_i})-M_{\Phi_j}(I_{H^2_\cle(\D)}- M_{\Phi_i} M^*_{\Phi_i}) M^*_{\Phi_j}\Big )\eta\\
&= U_i P_i U^*_i\eta-U_j (P_j^\perp+M_z P_j)(I_{H^2_\cle(\D)}- M_{\Phi_i} M^*_{\Phi_i}) (P_j^\perp+M^*_z P_j)U^*_j\eta\\
&=U_i P_i U^*_i\eta-U_j (P_j^\perp+M_z P_j)U_i P_i U^*_i (P_j^\perp U^*_j \eta) \quad (\text{by Lemma~\ref{Note_BCL}})\\
&=U_i P_i U^*_i\eta-U_j U_i P_i U^*_i U^*_j\eta \quad (\text{as}\,\, U_i P_i U^*_i \perp P_j \text{ by part (i) of Remarks~\ref{Remark_BCL} }).
\end{align*} 
Since $\Delta_{M}^{\bm{e}_i+\bm{e}_j}=0$, the above computation shows that $P_j- U_i P_j U^*_i=0$. That is, $U_i P_j=P_j U_i$ for all $i\neq j$.
This completes the proof.

\textit{Proof of} $(ii)\implies(i).$
By Lemma~\ref{BCL:DC}, $M=(M_{\Phi_1}, \ldots, M_{\Phi_n})$ is doubly commuting. Then for any $i\neq j$, 
\[
\Delta_{M}^{\bm{e}_i+\bm{e}_j}=(I_{H^2_\cle(\D)}- M_{\Phi_i} M^*_{\Phi_i}) (I_{H^2_\cle(\D)}- M_{\Phi_j} M^*_{\Phi_j}). 
\]
We have already observed in the proof of Lemma~\ref{BCL:DC} that in this case  $P_i$'s are mutually orthogonal projections. The proof now follows from Lemma~\ref{Note_BCL}.
\qed

It has been observed in \cite[Proposition 3.6]{HQY} that for a BCL pair $(M_{\Phi_1}, M_{\Phi_2})$ on $H^2_{\cle}(\D)$, if $\cle$ is finite dimensional, then $\Delta^{(1,1)}_{ (M_{\phi_1}, M_{\phi_2})}\geq0$ is equivalent to $\Delta^{(1,1)}_{ (M_{\phi_1}, M_{\phi_2})}=0$. We also prove a similar result for BCL $n$-tuples and which shows that in the case of finite dimensional co-efficient space $\cle$, the class $\mathfrak{B}^n_0$ is more relevant.
%in some sense for certain BCL $n$-tuples.  

\begin{Proposition}\label{Finite_dim}
Let $M=(M_{\Phi_1}, \ldots, M_{\Phi_n})$ be a BCL $n$-tuple corresponding to $(\cle, \ut{U},\ut{P})$ such that for $i\neq j$, $\Delta_{M}^{\bm{e}_i+\bm{e}_j} \geq0$. If $\text{dim}\, \cle<\infty$, then $M\in \mathfrak{B}^n_0(H^2_{\cle}(\D))$.   
\end{Proposition}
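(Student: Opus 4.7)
The purity half of the definition of $\mathfrak{B}^n_0$ is free: since $M_{\Phi_1}\cdots M_{\Phi_n}=M_z$ is the shift on $H^2_{\cle}(\D)$, $P_M$ is pure. So the entire content of the proposition is to promote the positivity hypothesis $\mathbb{S}^{-1}_2(M_{\Phi_i}, M_{\Phi_j})\geq 0$ to equality for every $i\neq j$, under the assumption $\dim \cle<\infty$. For the pair case $n=2$ this promotion is exactly \cite[Proposition 3.6]{HQY}, so morally I only need a device that reduces the $n$-tuple situation to a statement that survives in infinite ambient dimension.

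The plan is to run a short trace argument on $H^2_{\cle}(\D)$, exploiting finite-dimensionality of $\cle$. Lemma~\ref{Note_BCL} shows that $I_{H^2_{\cle}(\D)}-M_{\Phi_i}M^*_{\Phi_i}$ is the compression $U_iP_iU_i^*$ onto the constant subspace $\cle\subset H^2_{\cle}(\D)$, in particular a finite-rank (hence trace class) positive operator of trace $\operatorname{tr}(U_iP_iU_i^*)=\operatorname{rank}(P_i)$ when $\dim\cle<\infty$. Therefore
\[
\mathbb{S}^{-1}_2(M_{\Phi_i}, M_{\Phi_j})=(I-M_{\Phi_i}M^*_{\Phi_i})-M_{\Phi_j}(I-M_{\Phi_i}M^*_{\Phi_i})M^*_{\Phi_j}
\]
is trace class as well, and using cyclicity of the trace together with the isometry identity $M^*_{\Phi_j}M_{\Phi_j}=I$ I get
\[
\operatorname{tr}\bigl(\mathbb{S}^{-1}_2(M_{\Phi_i}, M_{\Phi_j})\bigr)=\operatorname{tr}(I-M_{\Phi_i}M^*_{\Phi_i})-\operatorname{tr}\bigl((I-M_{\Phi_i}M^*_{\Phi_i})M^*_{\Phi_j}M_{\Phi_j}\bigr)=0.
\]
A positive trace-class operator with vanishing trace is the zero operator, so $\mathbb{S}^{-1}_2(M_{\Phi_i}, M_{\Phi_j})=0$ for every $i\neq j$. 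Combined with the already observed purity of $M_z$, this places $(M_{\Phi_1},\ldots,M_{\Phi_n})$ inside $\mathfrak{B}^n_0(H^2_{\cle}(\D))$. The only step that deserves care is the justification of cyclicity, but once Lemma~\ref{Note_BCL} reduces $I-M_{\Phi_i}M^*_{\Phi_i}$ to a finite-rank operator in the finite-dimensional $\cle$ regime, this is standard; there is no real obstacle.
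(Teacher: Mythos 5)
Your proof is correct, and it takes a genuinely different route from the paper's. The paper works on the constant subspace: using Lemma~\ref{Note_BCL} it computes $\mathbb{S}^{-1}_2(M_{\Phi_i}, M_{\Phi_j})\eta=(U_i P_i U^*_i-U_j U_i P_i U^*_i U_j^*)\eta$ for $\eta\in\cle$, reads off from positivity that $\operatorname{ran}(U_iP_iU_i^*)$ is $U_j$-invariant, invokes finite-dimensionality of $\cle$ to upgrade invariance under a unitary to reduction (hence $U_jP_iU_j^*=P_i$ and the restriction to $\cle$ vanishes), and notes separately that $\mathbb{S}^{-1}_2(M_{\Phi_i}, M_{\Phi_j})$ always vanishes on $zH^2_{\cle}(\D)$. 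Your trace argument sidesteps all of this structural analysis: Lemma~\ref{Note_BCL} gives that $I-M_{\Phi_i}M^*_{\Phi_i}$ is a finite-rank projection of rank $\operatorname{rank}(P_i)$, cyclicity of the trace (valid for a trace-class factor against a bounded one) together with $M^*_{\Phi_j}M_{\Phi_j}=I$ forces $\operatorname{tr}\bigl(\mathbb{S}^{-1}_2(M_{\Phi_i},M_{\Phi_j})\bigr)=0$, and a positive trace-class operator with zero trace vanishes; the purity of $P_M=M_z$ is indeed free. Your argument is shorter and in fact proves slightly more, since it only requires each $P_i$ to have finite rank rather than $\dim\cle<\infty$; what it does not deliver en route is the commutation relation $U_jP_i=P_iU_j$, which the paper's proof produces directly and which ties this proposition to the characterization in Proposition~\ref{main_Characterization} (though of course that relation follows a posteriori once $\mathbb{S}^{-1}_2=0$ is known). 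Both proofs are sound.
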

\begin{proof}
As we have seen in the proof of the above proposition, for $\eta \in \cle$ and $i\neq j$,
\begin{align*}
&\Delta_{M}^{\bm{e}_i+\bm{e}_j} \eta=(U_i P_i U^*_i-U_j U_i P_i U^*_i U_j^*)\eta.
\end{align*}
Then, $\Delta_{M}^{\bm{e}_i+\bm{e}_j} \geq 0$ ($i\neq j$) implies 
$U_i P_i U^*_i-U_j U_i P_i U^*_i U^*_j\geq 0$, that is, $U_j P_i U^*_j\leq P_i$.
Hence, $\text{ran}\, P_i$ is a $U_j$-invariant subspace. Since $\text{dim}\, \cle<\infty$, $\text{ran}\, P_i$ is also a $U_j$-reducing subspace. In other words, $U_j P_i=P_i U_j$ ($i\neq j$). Therefore, by the above computation, $ \Delta_{M}^{\bm{e}_i+\bm{e}_j}|_{\cle}= 0$ ($i\neq j$). On the other hand, it can be checked very easily that \[ \Delta_{M}^{\bm{e}_i+\bm{e}_j}|_{zH^2_{\cle}(\D)}=0\] even when $\cle$ is infinite dimensional. This completes the proof.
\end{proof}

We end the section with a result which ensures that BCL $n$-tuples are crucial while studying isometric dilations of $n$-tuples of commuting contractions. 
Observe that if $T=(T_1,\dots,T_n)\in\clt^n(\clh)$, then the product contraction $P_T=T_1\cdots T_n$ is pure if and only if $T^{* \alpha}\to 0$ in SOT as $\alpha\to \infty$. Here for $\alpha=(\alpha_1, \ldots, \alpha_n)\in \mathbb{Z}_+^n$, $T^{\alpha}=T_1^{\alpha_1} \cdots T_n^{\alpha_n}$ and $\alpha\to \infty$ means that $\alpha_i \to \infty$ for each $i=1,\ldots, n$.  Indeed, it follows from the following inequalities 
\[P_T^l P_T^{* l} \leq T^\alpha T^{* \alpha} \leq P_T^m P_T^{* m},\]
 where $m= \text{min}\,\{\alpha_i:i=1,\ldots, n\}$ and $l=\text{max}\,\{\alpha_i:i=1,\ldots, n\}$. 

\begin{Proposition}
Let $T=(T_1,\dots, T_n)\in \clt^n(\clh)$ be such that the product contraction $P_T$ is pure and $T$ admits an isometric dilation. Then any minimal isometric dilation of $T$ is unitarily equivalent to a BCL $n$-tuple $(M_{\Phi_1}, \ldots, M_{\Phi_n})$ on $H^2_{\cle}(\D)$ for some Hilbert space $\cle$.
\end{Proposition}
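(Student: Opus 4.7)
The plan is to reduce the statement to the Berger--Coburn--Lebow theorem (Theorem~\ref{BCL main}), so the entire content is to show that for a minimal isometric dilation $V=(V_1,\dots,V_n)$ of $T$ on $\clk$ with dilation map $\Pi:\clh\to\clk$, the product isometry $P_V:=V_1\cdots V_n$ is \emph{pure}. Once this is established, Wold decomposition realizes $P_V$ as the shift on $H^2_{\cle}(\D)$ with $\cle:=\clk\ominus P_V\clk$, and then BCL identifies the commuting isometric factorization $(V_1,\dots,V_n)$ of $P_V=M_z$ with a BCL $n$-tuple $(M_{\Phi_1},\dots,M_{\Phi_n})$.

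The first ingredient is the pointwise estimate $\|T^{*\beta}h\|\to 0$ as $\beta\to\infty$ (meaning each $\beta_i\to\infty$), for every $h\in\clh$. Writing $k=\min_i \beta_i$ and using commutativity of the $T_i$'s, one has
\[
T^\beta T^{*\beta}=P_T^{k}\,T^{\beta-k\mathbf 1}(T^{\beta-k\mathbf 1})^{*}\,P_T^{*k}\ \leq\ P_T^{k}P_T^{*k},
\]
so $\|T^{*\beta}h\|^2\leq \|P_T^{*k}h\|^2\to 0$ by pureness of $P_T$.

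The main step is to show that $\clk_u:=\bigcap_{m\ge 0}P_V^{m}\clk=\{0\}$. Let $v\in\clk_u$, so $P_V^{m}P_V^{*m}v=v$ for all $m$. Fix $\alpha\in\Z_+^n$ and $q=\Pi h\in\Pi(\clh)$, and choose $m\ge \max_i \alpha_i$, writing $\beta:=m\mathbf 1-\alpha\in\Z_+^n$. Because the $V_i$'s pairwise commute,
\[
V^{\alpha}V^{\beta}=V_1^{m}\cdots V_n^{m}=P_V^{m},
\]
and since $V^{\alpha}$ is an isometry, taking adjoints gives $P_V^{*m}V^{\alpha}=V^{*\beta}$. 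Therefore
\[
\langle v,\,V^{\alpha}q\rangle=\langle P_V^{m}P_V^{*m}v,\,V^{\alpha}q\rangle=\langle P_V^{*m}v,\,V^{*\beta}q\rangle.
\]
Iterating the intertwining $\Pi T_i^{*}=V_i^{*}\Pi$ yields $V^{*\beta}\Pi=\Pi T^{*\beta}$, hence $\|V^{*\beta}q\|=\|T^{*\beta}h\|$. Since $\|P_V^{*m}v\|=\|v\|$ (as $v\in\clk_u$), Cauchy--Schwarz gives
\[
|\langle v,V^{\alpha}q\rangle|\ \le\ \|v\|\cdot \|T^{*\beta}h\|\ \longrightarrow\ 0
\]
as $m\to\infty$. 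Thus $v$ is orthogonal to every $V^{\alpha}\Pi(\clh)$, and by the minimality assumption $\clk=\bigvee_{\alpha}V^{\alpha}\Pi(\clh)$, forcing $v=0$.

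Therefore $P_V$ is pure. Applying Berger--Coburn--Lebow to the commuting isometric tuple $(V_1,\dots,V_n)$ whose product is now a unilateral shift of multiplicity $\dim\cle$, we obtain a unitary $\clk\to H^2_{\cle}(\D)$ intertwining $V_i$ with $M_{\Phi_i}$ for a BCL $n$-tuple on $H^2_\cle(\D)$. The expected principal obstacle is the second step, specifically the manoeuvre that replaces the adjoint word $P_V^{*m}V^{\alpha}$ by the single word $V^{*\beta}$; this is where one must exploit the commutativity of the $V_i$'s (not of their adjoints) to avoid any rearrangement involving mixed factors of the form $V_i^{*}V_j$, which are not controllable in general.
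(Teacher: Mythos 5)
Your proof is correct. The overall strategy coincides with the paper's: everything reduces to showing that the product isometry $P_V$ of a minimal isometric dilation is pure, after which the Berger--Coburn--Lebow theorem finishes the argument. The difference lies in how pureness of $P_V$ is established. The paper proves the (formally stronger, but for commuting isometries equivalent) statement that $V^{*\alpha}\to 0$ in SOT: on $\Pi(\clh)$ this follows from $\|V^{*\alpha}\Pi h\|=\|T^{*\alpha}h\|\to 0$, and on the dense subspace $\operatorname{span}\{V^{\beta}\Pi(\clh)\}$ one writes $V^{*\alpha}V^{\beta}h_\beta=V^{*(\alpha-\beta)}h_\beta$ for $\alpha\ge\beta$ and concludes by uniform boundedness. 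You instead show directly that the Wold residual space $\bigcap_m P_V^m\clk$ is trivial, by pairing a vector $v$ of that space against $V^{\alpha}\Pi(\clh)$, using the identity $P_V^{*m}V^{\alpha}=V^{*\beta}$ with $\beta=m\mathbf 1-\alpha$ (which, as you note, requires only commutativity of the $V_i$ and the isometry property, never a rearrangement of mixed words $V_i^*V_j$) together with Cauchy--Schwarz and minimality. Both routes rest on the same ingredients --- the co-extension relation $V^{*\beta}\Pi=\Pi T^{*\beta}$, minimality, and pureness of $P_T$ via the estimate $T^{\beta}T^{*\beta}\le P_T^{k}P_T^{*k}$ --- and are of comparable length; the paper's version yields the joint SOT convergence $V^{*\alpha}\to 0$ as a by-product, whereas yours isolates exactly the Wold-theoretic fact needed to invoke BCL.
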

\begin{proof}
Let $V=(V_1, \ldots,V_n)$ on $\clk$ be a minimal isometric dilation of $T$. That is,
\[T^\alpha= P_\clh V^\alpha|_{\clh}\quad (\alpha \in \Z_+^n)\, \, \text{and}\,\,  \clk =\bigvee_{\beta \in \Z_+^n} V^\beta(\clh).\]
Since an $n$-tuple of commuting isometries with the product isometry being pure is unitarily equivalent to a BCL $n$-tuple \cite[Theorem 3.1]{BCL}, it suffices to prove that the product isometry of $V$ is pure or equivalently, by the observation made prior to this proposition, $V^{* \alpha}\to 0$ in SOT as $\alpha\to \infty$.
Since the product contraction $P_T:=T_1\cdots T_n$ is pure and the minimal isometric dilation is always a co-extension (\cite{NF}), we have for $h\in\clh$, 
\[\|V^{* \alpha} h\|=\|T^{* \alpha} h\| \to 0 \,\, \text{in SOT} \quad \text{as}\quad \alpha \to \infty.\]
Let $h\in \clk$ be such that $h=\sum_{0\le \beta \le \gamma} V^\beta h_\beta$ ($\gamma\in \mathbb Z_+^n, h_\beta\in \clh$).
Then for $\alpha \geq \gamma$, we have
\[V^{* \alpha} h=\sum_{ 0\le \beta\le \gamma} V^{*( \alpha-\beta)} h_\beta,\]
 and,
\[\|V^{* \alpha} h\|=\|\sum_{0 \le \beta \le \gamma} V^{*( \alpha-\beta)} h_\beta\| \leq \sum_{0 \le \beta \le \gamma} \|V^{*(\alpha-\beta)} h_\beta\|\to 0 \,\, \text{in SOT as } \alpha\to\infty.\]
Since $\{\sum_{0\le \beta \le \gamma} V^\beta h_\beta :\gamma\in \mathbb Z_+^n, h_\beta\in \clh\}$ is a dense subspace of $\clk$ and $\{V^{*\alpha}\}_{\alpha\in \mathbb Z_+^n}$ is a uniformly bounded family, it follows that $V^{* \alpha}\to 0$ in SOT as $\alpha\to \infty$. This completes the proof. 
\iffalse
Let $x\in \clk$ be such that $x_l \to x$, where $x_l \in \text{span}_{\beta \in \Z_+^n}\, \{V^\beta \clh\}$. We set $P_V=V_1\cdots V_n$. 
Now, 
\begin{align*}
    \|P_V^{* k} x\|=\|P_V^{* k} x_l -P_V^{* k} x_l +P_V^{* k} x\|
    \leq \|P_V^{* k} x_l \| +\|P_V^{* k} x_l - P_V^{* k} x\|
    \leq \|P_V^{* k} x_l \| +\| x_l - x\|
\end{align*}
First, by taking $k \to \infty$ in the above computation, we have 
\begin{align*}
    \lim_{k \to \infty }\|P_V^{* k} x\|\leq \lim_{k \to \infty } \|P_V^{* k} x_l \| +\| x_l - x\|
    =\| x_l - x\| 
\end{align*}
Again, by taking $l \to \infty$, we get $\lim_{k \to \infty }\|P_V^{* k} x\|=0$. Moreover,
By Theorem [] \cite{BCL}, the minimal isomeric dilation is unitary equivalent to a BCL $n$-tuple $(M_{\Phi_1}, \ldots, M_{\Phi_n})$ on $H^2_{\cle}(\D)$. This completes the proof.
\fi
\end{proof}

%In particular, for the model of $n$-tuple of pure isometries  $(M_{\Phi_1}, \ldots, M_{\Phi_n})\in \clb(H^2_{\cle}(\D))^n$, if the tuple satisfies $\mathbb{S}^{-1}_k(M_{\Phi_{i_1}},\ldots, M_{\Phi_{i_k}})=0$ ($\{i_1, \ldots, i_k\} \subset \{1, \ldots, n\}$ for $2\leq k \leq n$). Then 
%\[\mathbb{S}^{-1}_2(M_{\Phi_{i_j}}, \Tilde{M}_{\Phi_{i_j}})=0,\]
% where $\Tilde{M}_{\Phi_{i_j}}=M_{\Phi_{i_1}} \dots M_{\Phi_{i_{j-1}}} M_{\Phi_{i_{j+1}}} \dots M_{\Phi_{i_k}}$ $(\{i_1, \ldots, i_k\} \subset \{1, \ldots, n\} \,\, \textit{for}\,\,2\leq k \leq n)$.

%We have an easy observation. Write it on previous section as a remark.
%\begin{Lemma}
%If sum of two orthogonal projections is a contraction, then the projections are mutually orthogonal.
%\end{Lemma}

\newsection{Isometric dilation and von Neumann inequality for $\mathfrak{B}^n_0(\clh)$  }\label{Sec3}
This section is devoted to find explicit BCL type isometric dilations for tuples in $\mathfrak{B}^n_0(\clh)$. Isometric dilations of such tuples are known due to a result of Brehmer (\cite{Breh}). However, we find explicit BCL type dilations of such tuples and prove a sharp von Neumann inequality in certain cases. Explicit BCL type dilations are the key to prove the commutant lifting theorem in the subsequent section. For a contraction $T\in \clb(\clh)$, recall that the defect operator and the defect space of $T$ are defined as
\[D_T:=(I-T T^*)^{1/2} \quad \text{and}\quad \cld_T:= \overline{\text{ran}}\, D_T.\]
For an $n$-tuple of commuting contractions $T=(T_1,\ldots,T_n)\in\clt^n(\clh)$, 
the product contraction is denoted by $P_T:=T_1 \cdots T_n$. 
%and we denote by $\Tilde{T}_j$ the $(n-1)$-tuple obtained from $T$ by deleting $T_j$, that is, \[\Tilde{T}_j:=(T_1,\ldots, T_{j-1},  T_{j+1}, \ldots, T_n).\] Also, the product contraction of $\Tilde{T}_j$ is denoted by $P_{\Tilde{T}_j}:=T_1\cdots T_{j-1}T_{j+1}\cdots T_n$.
We are now ready to prove the dilation result for the case when the defect spaces of $T_i$'s are finite dimensional.
\begin{Theorem}\label{Dilion_finite}
Let $T=(T_1,\ldots,T_n)\in \mathfrak{B}^n_0(\clh)$ be such that $\text{dim}\, \cld_{T_i}< \infty$ ($i=1,\ldots,n$). Let $\cle=\bigoplus^n_{i=1} \cld_{T_i}$. Then there exist an isometry $\Pi: \clh \to H^2_\cle(\D)$, and a BCL $n$-tuple $(M_{\Phi_1}, \ldots,  M_{\Phi_n})\in \mathfrak{B}^n_0(H^2_{\cle}(\D))$ 
%satisfying 
%$\mathbb{S}^{-1}_k(M_{\Phi_{i_1}},\ldots, M_{\Phi_{i_k}})=0$ ($\{i_1, \ldots, i_k\} \subset \{1, \ldots, n\}$ for $k=2,\ldots n$) 
such that 
\[\Pi T^*_i=M_{\Phi_i}^* \Pi\]
for all $i=1,\dots,n$.
%Moreover, the coefficient space $\cle$ is of finite dimensional and equal to  $\cle=\bigoplus^n_{i=1} \cld_{T_i}$.
\end{Theorem}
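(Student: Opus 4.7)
The plan is to construct the dilation map $\Pi$ from the pure product contraction $P_T=T_1\cdots T_n$ and then engineer an explicit BCL $n$-tuple whose data $(\cle,\ut{U},\ut{P})$ is built out of the individual defect spaces $\cld_{T_j}$. The finite-dimensional hypothesis will be used at exactly one (crucial) point: upgrading certain natural isometries between defect spaces to unitaries.

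First I would establish the operator identity
\[
I_{\clh} - P_TP_T^{*} = \sum_{j=1}^{n} D_{T_j}^{2}.
\]
The starting point is the telescoping $I - P_TP_T^{*}=\sum_{i=1}^n T_1\cdots T_{i-1}D_{T_i}^2 T_{i-1}^{*}\cdots T_{1}^{*}$. The condition $\mathbb{S}^{-1}_2(T_i,T_j)=0$ combined with $T_iT_j=T_jT_i$ yields $D_{T_j}^{2}=T_iD_{T_j}^{2}T_i^{*}$ for all $i\neq j$, and iterating this collapses each term of the telescoping to $D_{T_i}^{2}$. Setting $\cle=\bigoplus_{j=1}^n\cld_{T_j}$ and
\[
\Pi h := \sum_{k=0}^{\infty}\bigl(D_{T_1}P_T^{*k}h,\ldots,D_{T_n}P_T^{*k}h\bigr)z^{k},
\]
the identity above together with the purity of $P_T$ gives, by a second telescoping, $\|\Pi h\|^{2}=\|h\|^{2}$, so $\Pi$ is an isometry into $H^2_{\cle}(\D)$.

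Next I would construct the BCL data. For $i\neq j$, $D_{T_j}^{2}=T_iD_{T_j}^{2}T_i^{*}$ gives $\|D_{T_j}h\|=\|D_{T_j}T_i^{*}h\|$, so $D_{T_j}h\mapsto D_{T_j}T_i^{*}h$ extends to an isometry of $\cld_{T_j}$; since $\dim\cld_{T_j}<\infty$, this isometry is a unitary $U_i^{(j)*}$ on $\cld_{T_j}$, and these unitaries (for varying $i\neq j$) pairwise commute because $T_i^{*}T_k^{*}=T_k^{*}T_i^{*}$. I then define the ``diagonal'' piece by $U_i^{(i)}:=\prod_{j\neq i}U_j^{(i)*}$ (order irrelevant by commutativity), which forces $U_1^{(i)}\cdots U_n^{(i)}=I_{\cld_{T_i}}$ on each summand. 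Setting $U_i:=\bigoplus_{k}U_i^{(k)}\in \clb(\cle)$ and letting $P_j$ be the orthogonal projection onto $\cld_{T_j}\subseteq\cle$, the BCL conditions (1)--(4) of Theorem~\ref{BCL main} are immediate: commutativity of the $U_i$'s and $U_1\cdots U_n=I_{\cle}$ come from the block-diagonal construction, while $U_iP_j=P_jU_i$ collapses (3) and (4) to the trivially valid $P_i+P_j\leq I_{\cle}$ and $\sum_{j}P_j=I_{\cle}$; in particular Proposition~\ref{main_Characterization} ensures $(M_{\Phi_1},\ldots,M_{\Phi_n})\in \mathfrak{B}^{n}_0(H^2_{\cle}(\D))$. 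To verify $\Pi T_i^{*}=M_{\Phi_i}^{*}\Pi$, I will use the block-diagonal form~\eqref{BCL_Matrix}: on the $j$-th block with $j\neq i$, $M_{\Phi_i}^{*}$ acts as $I\otimes U_i^{(j)*}$ and the intertwining reduces to $U_i^{(j)*}D_{T_j}=D_{T_j}T_i^{*}$, which holds by definition; on the $i$-th block, $M_{\Phi_i}^{*}$ acts as $M_z^{*}\otimes U_i^{(i)*}$, and the required identity $U_i^{(i)*}D_{T_i}P_T^{*}=D_{T_i}T_i^{*}$ follows by pushing $T_i^{*}$ to the far right of $P_T^{*}$ and repeatedly applying $D_{T_i}T_j^{*}=U_j^{(i)*}D_{T_i}$, which yields $D_{T_i}P_T^{*}=\bigl(\prod_{j\neq i}U_j^{(i)*}\bigr)D_{T_i}T_i^{*}=U_i^{(i)}D_{T_i}T_i^{*}$.

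The main obstacle is pinning down the diagonal unitaries $U_i^{(i)}$: the $\mathbb{S}^{-1}_2$--identity only naturally produces unitaries of the form $U_i^{(j)}$ with $i\neq j$, so $U_i^{(i)}$ must be reverse-engineered from the BCL relation $U_1\cdots U_n=I_{\cle}$; the computation of $D_{T_i}P_T^{*}$ above shows that this forced choice is exactly what makes the $i=j$ intertwining work, and the consistency of this whole scheme relies silently on Lemma~\ref{S=0} which ensures enough structure among the $D_{T_j}$'s. Finite dimensionality of each $\cld_{T_j}$ is used in precisely one place, namely to turn the natural isometries $D_{T_j}h\mapsto D_{T_j}T_i^{*}h$ into surjective unitaries on $\cld_{T_j}$, without which $U_i^{(j)}$ would fail to be unitary and the BCL framework could not be invoked.
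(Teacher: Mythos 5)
Your proposal is correct, and the core construction of the BCL data coincides with the paper's: the same unitaries $W_j^{(i)}(D_{T_j}h)=D_{T_j}T_i^*h$ (your $U_i^{(j)*}$), the same forced definition of the diagonal piece as the product of adjoints of the off-diagonal ones, the same projections $P_j$ onto the summands $\cld_{T_j}$, and the same appeal to Proposition~\ref{main_Characterization} via $U_iP_j=P_jU_i$. Where you genuinely diverge is in the dilation map. The paper keeps the telescoping identity $D_{P_T}^2=\sum_{j}T_1\cdots T_{j-1}D_{T_j}^2T_{j-1}^*\cdots T_1^*$ uncollapsed, builds the isometry $V:D_{P_T}h\mapsto(D_{T_1}h,D_{T_2}T_1^*h,\dots,D_{T_n}T_1^*\cdots T_{n-1}^*h)$, and sets $\Pi=(I\otimes V)\pi$, so the $j$-th component of $\Pi h$ is $\sum_k z^kD_{T_j}T_1^*\cdots T_{j-1}^*P_T^{*k}h$. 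You instead collapse the telescoping using $T_iD_{T_j}^2T_i^*=D_{T_j}^2$ to get $I-P_TP_T^*=\sum_jD_{T_j}^2$ (which is the paper's Lemma~\ref{Lemma_ness}, proved there only in Section~4 and by Agler's hereditary calculus rather than by your direct collapse) and use the ``flat'' components $\sum_kz^kD_{T_j}P_T^{*k}h$. The two maps differ by the constant block-diagonal unitary $\bigoplus_j\bigl(I\otimes W_j^{(1)}\cdots W_j^{(j-1)}\bigr)$, which commutes with every $M_{\Phi_i}$, so both are valid dilation maps; yours makes the intertwining check on the blocks $j\neq i$ essentially a one-line commutation of $T_i^*$ with $P_T^*$, at the cost of having to prove the collapsed defect identity up front, whereas the paper's choice avoids that identity but pays for it with the longer inner-product computation in its proof. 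Your verification of the diagonal block, $U_i^{(i)*}D_{T_i}P_T^*=D_{T_i}T_i^*$ by peeling off the factors $T_j^*$ with $j\neq i$, is exactly the identity the paper records as $W_j^{(j)}(D_{T_j}P_T^*h)=D_{T_j}T_j^*h$.
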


 \begin{proof}
 Since $\Delta_{T}^{\bm{e}_i+\bm{e}_j}=0$ for all $i \neq j$,
 \begin{align*}
     D^2_{T_j}-T_i D^2_{T_j} T^*_i=D^2_{T_i}-T_j D^2_{T_i} T^*_j=0 \quad (i \neq j).
 \end{align*}
 As the defect spaces are assumed to be finite dimensional, the above identities induce  unitaries $W_j^{(i)}: \cld_{T_j} \to \cld_{T_j}$ for all $i\neq j$, defined by 
 \begin{align}\label{W_ij}
     W_j^{(i)}(D_{T_j}h)=D_{T_j} T^*_i h\quad (h\in\clh).
 \end{align}
Then observe that for $i, k=1,\ldots,n$ and $i\neq k\neq j$, $W_j^{(i)} W_j^{(k)}=W_j^{(k)} W_j^{(i)}$. Indeed,
\begin{align*}
   W_j^{(i)} W_j^{(k)} (D_{T_j} h)
 =W_j^{(i)} (D_{T_j} T^*_{k}h)
= D_{T_j} T^*_i T^*_{k} h
=W_j^{(k)} W_j^{(i)} (D_{T_j} h).
\end{align*}
Now for $i=j$, we set $W^{(j)}_j:=W_j^{ *(1)}\cdots W_j^{ *(j-1)} W_j^{* (j+1)}\cdots W_j^{* (n)}$. Then it is clear from the definition that
\[ W_j^{(j)}(D_{T_j} P_T^* h)=D_{T_j} T^*_j h\quad (h\in\clh),\]
and for each $j=1,\dots,n$, the tuple $(W_j^{(1)},\dots, W_j^{(n)})$ is a tuple of commuting unitaries and 
\begin{align}\label{W_ij=Id}
    W_j^{(1)}\cdots W_j^{(n)}=I_{\cld_{T_j}}
\end{align}
We now have all the ingredients to define the triple $(\cle,\ut{U},\ut{P})$ corresponding to the dilating BCL $n$-tuple. We take $\cle=\bigoplus^n_{i=1} \cld_{T_i}$, $\ut{U}=(U_1,\dots, U_n)$ and $\ut{P}=(P_1,\dots, P_n)$, where for $i=1,\dots,n$, 
\begin{align*}
    U^*_i=\bigoplus^n_{j=1} W_j^{(i)} \quad \text{and} \quad  P_i (x_1, \ldots, x_n)=(0,\ldots 0,  x_i , 0, \ldots 0)\quad (x_l \in \cld_{T_l}, l=1,\ldots,n).
\end{align*} 
By the construction of $U_i$ and $P_i$ and the identity \ref{W_ij=Id}, it easy to observe that for $i,j=1,\ldots,n$, 
\[U_i U_j=U_j U_i, \quad U_i P_j=P_j U_i,\quad P_1+ \ldots+P_n=I_\cle, \quad \text{and} \quad U_1 \cdots U_n=I_{\cle}.\]
Thus $\ut{U}$ and $\ut{P}$ satisfy all the conditions (1)-(4) as in Theorem~\ref{BCL main}, and by Proposition~\ref{main_Characterization}, the corresponding BCL $n$-tuple $(M_{\Phi_1},\ldots,M_{\Phi_n})\in \mathfrak{B}^n_0(H^2_{\cle}(\D))$.
In the rest of the proof, we construct the dilation map and show that $(M_{\Phi_1},\ldots,M_{\Phi_n})$ is an isometric dilation of $(T_1,\dots, T_n)$.
The following identity 
\begin{align*}
    D_{P_T}^2=D_{T_1}^2+T_1 D^2_{T_2}T^*_1+T_1 T_2 D^2_{T_3}T^*_1 T^*_2 +\ldots + T_1 \cdots T_{n-1} D^2_{T_n}T^*_1 \cdots  T^*_{n-1} h \quad (h\in\clh),
\end{align*}
implies that the map $\iota:\cld_{P_T} \to \cle$, defined by
\begin{align}\label{Iso_V}
    D_{P_T} h \mapsto (D_{T_1} h, D_{T_2} T^*_1 h, D_{T_3}T^*_1 T^*_2 h, \ldots, D_{T_n}T^*_1 \cdots T^*_{n-1} h),
\end{align}
 is an isometry. Since $P_T$ is a pure contraction, the map $\pi :\clh \to H^2_{\cld_{P_T}}(\D)$, defined by $\pi h=\sum_{k \in\Z_+}  z^k (D_{P_T} P_T^{* k}h) $ ($z\in \D, h\in \clh$), is an isometry. In fact, $\pi$ is the well-known dilation map for $P_T$. Finally, the dilation map for us is the isometry $\Pi:=(I \otimes \iota)\pi: \clh\to H^2_{\cle}(\D)$. To complete the proof, we show that for all $i=1,\dots,n$, 
\[\Pi T^*_i=M_{\Phi_i}^* \Pi.\]
Indeed, it is established by the rigorous computation below.  
 For $ k\in \Z_+, \eta\in\cle, h\in \clh$,
\begin{align*}
    &\langle M_{\Phi_i}^* \Pi h, z^k \eta\rangle\\
    &= \langle  (I \otimes \iota)\sum_{m \in\Z_+}  z^m (D_{P_T} P_T^{* m}h), (U_i P_i^\perp+z U_i P_i)z^k \eta \rangle \\
    &= \langle (P_i^\perp U_i^* \iota D_{P_T} P_T^{* k}h + P_i U_i^* \iota D_{P_T} P_T^{* k+1}h), \eta\rangle \\
   &= \langle P_i^\perp U_i^* (D_{T_1}P_T^{* k}h , D_{T_2} T^*_1 P_T^{* k}h,\ldots, D_{T_n}T^*_1 \cdots T^*_{n-1}P_T^{* k}h ), \eta\rangle \\
   &\quad \quad \quad \quad \quad \quad \quad \quad\quad \quad\quad+ \langle P_i U_i^* (D_{T_1} P_T^{* k+1}h, D_{T_2} T^*_1 P_T^{* k+1}h, \ldots,  D_{T_n}T^*_1 \cdots T^*_{n-1} P_T^{* k+1}h ), \eta\rangle \\
    &= \langle  U_i^* (D_{T_1}P_T^{* k}h,\ldots,  D_{T_{i-1}} T^*_1 \cdots, T^*_{i-2}P_T^{* k}h, 0,  D_{T_{i+1}} T^*_1 \cdots T^*_{i}P_T^{* k}h, \ldots, D_{T_n}T^*_1 \cdots T^*_{n-1}P_T^{* k}h) , \eta\rangle \\
   &\quad \quad \quad \quad \quad \quad \quad \quad \quad \quad+U_i^* (0,\cdots, 0, D_{T_i} T^*_1 \cdots, T^*_{i-1} P_T^{* k+1}h, 0, \ldots,  0), \eta\rangle \text{  (as  } U_i P_i=P_i U_i)\\
   &=\langle (D_{T_1} T_i^* P_T^{* k}h,\ldots,  D_{T_n}T^*_1 \cdots T^*_{n-1} T^*_i P_T^{* k}h ), \eta\rangle.
\end{align*}
On the other hand, for $\eta\in\cle, h\in \clh, k\in \Z_+$,
\begin{align*}
   \langle \Pi T^*_i h, z^k \eta\rangle &= \langle  (I \otimes \iota)\sum_{m \in\Z_+}  z^m (D_{P_T} P_T^{* m}T^*_ih), z^k \eta\rangle \\
   &= \langle  \iota D_{P_T} P_T^{* k}T^*_ih, \eta\rangle \\
   &=\langle (D_{T_1} T_i^* P_T^{* k}h,\ldots,  D_{T_n}T^*_1 \cdots T^*_{n-1} T^*_i P_T^{* k}h) , \eta\rangle. 
   \end{align*}
Hence, $\Pi T^*_i=M_{\Phi_i}^* \Pi\,\, (i=1,\ldots,n)$ as asserted. 
\end{proof}

An isometric dilation for arbitrary $T=(T_1,\ldots, T_n)\in \mathfrak{B}^n_0(\clh)$ can also be obtained in a similar way. Since the defect space $\cld_{T_j}$ may not be necessarily finite dimensional, the operator $W_j^{(i)}$ $(j\neq i)$, as defined in ~\ref{W_ij}, is an isometry and may not be necessarily a unitary. This forces us to take minimal unitary extension of the commuting tuple of isometries $(W_j^{(1)}, \ldots, W_j^{(j-1)}, W_j^{(j+1)}, \ldots, W_j^{(n)})$ defined on $\cld_{T_j}$. We denote the extended tuple of unitaries on $\clk_j$ again by $(W_j^{(1)}, \ldots, W_j^{(j-1)}, W_j^{(j+1)}, \ldots, W_j^{(n)})$. For $i=j$, we let the unitary $ W_j^{(j)}:\clk_j \to \clk_j$ to be  
\begin{align*}
    W_j^{(j)}:=W_j^{(1)*} \cdots W_j^{(j-1) *} W_j^{(j+1) *} \cdots W_j^{(n)*}. \end{align*}
Then, by the same arguments as before, the tuple $(W_j^{(1)},\dots, W_j^{(n)})$ is a tuple of commuting unitaries,
\begin{align*}
     W_j^{(j)}(D_{T_j} P_T^* h)=D_{T_j} T^*_j h\,\, (j=1,\dots, n, h\in\clh),
\text{ and }
    W_j^{(1)}\cdots W_j^{(n)}=I_{\cld_{T_j}}.
\end{align*}
The triple $(\cle,\ut{U},\ut{P})$ corresponding to the dilating BCL $n$-tuple is defined similarly by taking $\cle=\bigoplus^n_{i=1} \clk_i$, $\ut{U}=(U_1,\dots, U_n)$ and $\ut{P}=(P_1,\dots, P_n)$, where for $i=1,\dots,n$, 
\begin{align*}
    U^*_i=\bigoplus^n_{j=1} W_j^{(i)} \quad \text{and} \quad  P_i (x_1, \ldots, x_n)=(0,\ldots 0,  x_i , 0, \ldots 0)\quad (x_l \in \clk_l, l=1,\ldots,n).
\end{align*}
Since $\bigoplus^n_{i=1} \cld_{T_i}\subseteq \cle$, the isometry $\iota$, as in \ref{Iso_V}, can be viewed as an isometry from $\clh$ to $\cle$. Then by the same computation as done in the proof of the previous theorem, one also has 
\[\Pi T^*_i=M_{\Phi_i}^* \Pi\,\, (i=1,\ldots,n),\] where 
$\Pi:=(I \otimes \iota)\pi: \clh\to H^2_{\cle}(\D)$ and the isometry $\pi :\clh \to H^2_{\cld_{P_T}}(\D)$ is the dilation map of $P_T$ defined by $\pi h=\sum_{k \in\Z_+}  z^k (D_{P_T} P_T^{* k}h) $ ($z\in \D, h\in \clh$). Thus we have proved the following dilation result. 
 
\begin{Theorem} \label{Dila_infi}
Let $T=(T_1,\ldots, T_n)\in \mathfrak{B}^n_0(\clh)$. Then there exist a Hilbert space $\cle$, an isometry $\Pi: \clh \to H^2_\cle(\D)$, and a BCL $n$-tuple $(M_{\Phi_1}, \ldots,  M_{\Phi_n})\in \mathfrak{B}^n_0(H^2_{\cle}(\D))$ 
such that 
\[\Pi T^*_i=M_{\Phi_i}^* \Pi\]
for all $i=1,\ldots,n$.
\end{Theorem}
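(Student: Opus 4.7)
The plan is to adapt the proof of Theorem~\ref{Dilion_finite} to arbitrary defect spaces. The only step there that used $\dim\cld_{T_i}<\infty$ was the assertion that the operators $W_j^{(i)}:\cld_{T_j}\to\cld_{T_j}$ (for $i\neq j$) given by $W_j^{(i)}(D_{T_j}h)=D_{T_j}T_i^*h$ are unitaries; without the dimension hypothesis they are still well-defined isometries by virtue of $\mathbb{S}_2^{-1}(T_i,T_j)=0$, but they might fail to be surjective. Everything else in that proof was formal and should survive once we overcome this one issue.

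First I would check, exactly as in Theorem~\ref{Dilion_finite}, that for each fixed $j$ the family $\{W_j^{(i)}:i\neq j\}$ is a commuting tuple of isometries on $\cld_{T_j}$. The crucial new ingredient is then to simultaneously unitarise this tuple: enlarge $\cld_{T_j}$ to a Hilbert space $\clk_j\supseteq \cld_{T_j}$ on which the $W_j^{(i)}$ (for $i\neq j$) extend to commuting unitaries, and define on $\clk_j$
\[
W_j^{(j)}:=W_j^{(1)*}\cdots W_j^{(j-1)*}W_j^{(j+1)*}\cdots W_j^{(n)*}.
\]
By construction $(W_j^{(1)},\ldots,W_j^{(n)})$ is then a commuting unitary $n$-tuple on $\clk_j$ with $W_j^{(1)}\cdots W_j^{(n)}=I_{\clk_j}$, and the defining identity $W_j^{(i)}(D_{T_j}h)=D_{T_j}T_i^*h$ for $i\neq j$, together with the commutativity of the $T_i$'s, gives $W_j^{(j)}(D_{T_j}P_T^*h)=D_{T_j}T_j^*h$ for every $h\in\clh$.

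With these extended data in hand, set $\cle:=\bigoplus_{i=1}^n\clk_i$, $U_i^*:=\bigoplus_j W_j^{(i)}$, and $P_i$ the projection onto the $i$-th summand. The four conditions of Theorem~\ref{BCL main} follow essentially by inspection: commutativity of the $W_j^{(i)}$'s yields $U_iU_k=U_kU_i$; the block-diagonal form of the $U_i$'s yields $U_iP_k=P_kU_i$; and the definitions of the $P_i$'s and of $W_j^{(j)}$ yield both $P_1+\cdots+P_n=I_\cle$ and $U_1\cdots U_n=I_\cle$. Proposition~\ref{main_Characterization} then places the associated BCL tuple $(M_{\Phi_1},\ldots,M_{\Phi_n})$ in $\mathfrak{B}^n_0(H^2_\cle(\D))$. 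Since $\bigoplus_i\cld_{T_i}\subseteq\cle$, the isometry $V:\cld_{P_T}\to\cle$ of \eqref{Iso_V} still makes sense, and $\Pi:=(I\otimes V)\pi$, where $\pi$ is the standard pure-contraction dilation map of $P_T$, is an isometry $\clh\to H^2_\cle(\D)$. The intertwining relation $\Pi T_i^*=M_{\Phi_i}^*\Pi$ then follows verbatim from the computation in the proof of Theorem~\ref{Dilion_finite}, using only $U_iP_i=P_iU_i$ and the identity $W_j^{(j)}(D_{T_j}P_T^*h)=D_{T_j}T_j^*h$.

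The main obstacle is the second step: producing the simultaneous commuting unitary extension of $\{W_j^{(i)}:i\neq j\}$ on an enlarged Hilbert space $\clk_j$. For a single isometry this is routine via Wold decomposition, and for commuting tuples of isometries the existence of such a commuting unitary extension is a classical but non-trivial fact that plays the role of the invisible finite-dimensional shortcut of the previous proof. Once the extension is secured, the remainder of the argument is book-keeping that duplicates the finite-dimensional case.
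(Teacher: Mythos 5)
Your proposal is correct and follows essentially the same route as the paper: the authors likewise pass to the minimal commuting unitary extension of the isometry tuple $(W_j^{(i)})_{i\neq j}$ on an enlarged space $\clk_j$, define $W_j^{(j)}$ as the product of the adjoints of the others, set $\cle=\bigoplus_j\clk_j$ with the same $U_i$'s and $P_i$'s, and reuse the intertwining computation from Theorem~\ref{Dilion_finite} verbatim. The commuting unitary extension you identify as the key new ingredient is exactly the step the paper invokes (It\^{o}'s theorem on extending commuting isometries to commuting unitaries), so there is no gap.
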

\begin{Remark}
For $T\in \mathfrak{B}^n_0(\clh)$, by the definition, the product contraction $P_T$ is pure. The above dilation result remains valid even if we remove the requirement that the product contraction $P_T$ is pure. In such a case, the isometric dilation we get would not be a BCL $n$-tuple. Instead, it would be direct sums of a BCL $n$ tuple and an $n$-tuple of commuting unitaries. Indeed, if 
\[
Q:=\text{SOT}-\lim_{n\to \infty} P_T^nP_T^{* n}\ \text{ and }
\clq= \overline{\text{ran}}\ Q,
\]
then for each $i=1,\dots,n$, the operator $\tilde{T_i}:\clq\to\clq$
defined by $\tilde{T_i}^*(Qh)= QT_i^* h$ is a co-isometry and $\tilde{T_i}\tilde{T_j}= \tilde{T_j}\tilde{T_i}$. Let $(W_1,\dots W_n)$ on $\clk$ be the minimal unitary co-extension of $(\tilde{T_1},\dots,\tilde{T_n})$. Since the minimal isometric dilation map of $P_T$ can be taken to be 
\[\tilde{\pi}:\clh\to H^2_{\cld_{P_T}}(\D)\oplus \clk, \quad h\mapsto (\pi(h),Q h)=(\sum_{k \in\Z_+}  z^k (D_{P_T} P_T^{* k}h), Qh),  
\]
one can check that 
\[
\Pi T_i^*=(M_{\Phi_i}^*\oplus W_i^*)\Pi \quad (i=1,\dots,n),
\]
where $(M_{\Phi_1},\dots M_{\Phi_n})\in \mathfrak{B}^n_0(H^2_{\cle}(\D))$ is the BCL $n$-tuple as constructed in Theorem~\ref{Dila_infi} and $\Pi:\clh \to H^2_{\cle}(\D)\oplus \clk$ is given by $\Pi(h)=((I\otimes \iota)\pi(h),Qh)$ with $\iota$ as in ~\eqref{Iso_V}.

\end{Remark}

 In the rest of this section, we prove a sharper version of von Neumann inequality. Since any element $T$ of $\mathfrak{B}^n_0(\clh)$ has an isometric dilation, $T$ satisfies the von Neumann inequality for $\mathbb{C}[z_1, \ldots, z_n]$, the ring of polynomials in $n$ variables with complex coefficients. That is, 
for all polynomials $p\in \mathbb{C}[z_1, \ldots, z_n]$,
\begin{align*}
    \|p(T)\|_{\clb(\clh)} \leq \sup_{\z\in \overline{\D}^n } |p(\z)|.
\end{align*} 
 However, we show that, in this particular case, one only needs to consider an one dimensional variety instead of whole $\overline{\D}^n$. Before going into further details, we need to recall some results. Let $T=(T_1,\ldots,T_n)$ be an $n$-tuple of commuting bounded operators on $\clh$. The Taylor joint spectrum (\cite{Taylor})  of $T$ is denoted by $\sigma(T)$. In the case when $T$ consists of commuting matrices, $\sigma(T)$ is the set of all joint eigenvalues. Let $(M_{\Phi_1}, \ldots, M_{\Phi_n})$ be a BCL $n$-tuple on $H^2_{\cle}(\D)$ such that $\cle$ is finite dimensional. Then, it is proved in \cite[Theorem 7.3]{BKS} that the set 
 \begin{align}\label{one_dim variety}
     W:=\{\z=(z_1,\ldots, z_n)\in\mathbb{C}^n: (z_1,\ldots, z_n)\in \sigma(\Phi_1(\underline{\z}),\ldots, \Phi_n(\underline{\z}))\},
 \end{align}
is a one dimensional symmetric algebraic variety, where for $\z=(z_1,\dots,z_n)\in\mathbb C^n$, $\underline{\z}:=z_1\cdots z_n$. Here the symmetric property of $W$ simply means that  
 \[(z_1,\ldots, z_n)\in W\quad \text{if and only if}\quad (\frac{1}{\bar{z}_1},\ldots, \frac{1}{\bar{z}_n})\in W.\]
We are now ready to prove the sharp von Neumann inequality. 
\begin{Theorem}\label{sharp vN inequality}
Let $T=(T_1,\ldots,T_n)\in \mathfrak{B}^n_0(\clh)$ and $\text{dim}\, \cld_{T_i}< \infty$ for all $i=1,\ldots,n$. Then there exists a one dimensional symmetric algebraic variety $W$ in $\mathbb{C}^n$ such that for all polynomials $p\in \mathbb{C}[z_1, \ldots, z_n]$,
\[\|p(T)\|_{\clb(\clh)} \leq \sup_{\z\in W\cap \overline{\D}^n} |p(\z)|.\]
\end{Theorem}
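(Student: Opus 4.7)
The plan is to combine the explicit BCL isometric dilation furnished by Theorem~\ref{Dilion_finite} with a fiberwise spectral theorem argument. First, since each $\cld_{T_i}$ is finite dimensional, Theorem~\ref{Dilion_finite} produces an isometric dilation $(M_{\Phi_1},\dots,M_{\Phi_n})\in\mathfrak{B}^n_0(H^2_{\cle}(\D))$ of $T$ with $\cle=\bigoplus_{i=1}^n\cld_{T_i}$ finite dimensional. Consequently, for every $p\in\mathbb{C}[z_1,\dots,z_n]$, one has the trivial bound $\|p(T)\|_{\clb(\clh)}\le\|p(M_{\Phi_1},\dots,M_{\Phi_n})\|$, and so the problem reduces to estimating the right hand side by $\sup_{\z\in W\cap\overline{\D}^n}|p(\z)|$, where $W$ is the one dimensional symmetric algebraic variety introduced in~\eqref{one_dim variety}.

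Next, I would rewrite $p(M_{\Phi_1},\dots,M_{\Phi_n})$ as the multiplication operator $M_{p(\Phi_1,\dots,\Phi_n)}$ and invoke the standard formula for the norm of a multiplication operator with a $\clb(\cle)$-valued bounded analytic symbol, which in the present polynomial setting reads
\[
\|M_{p(\Phi_1,\dots,\Phi_n)}\|_{\clb(H^2_{\cle}(\D))}=\sup_{\lambda\in\mathbb{T}}\|p(\Phi_1(\lambda),\dots,\Phi_n(\lambda))\|_{\clb(\cle)}.
\]
This reduces the estimate to a fiberwise question about the norm of $p$ evaluated at a commuting tuple of matrices on the finite dimensional space $\cle$.

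The crucial observation is that for each $\lambda\in\mathbb{T}$ the matrix $\Phi_i(\lambda)=U_i(P_i^\perp+\lambda P_i)$ is the product of two unitaries, hence itself a unitary on $\cle$, and the matrices $\Phi_1(\lambda),\dots,\Phi_n(\lambda)$ mutually commute since the multiplication operators do. Thus $(\Phi_1(\lambda),\dots,\Phi_n(\lambda))$ is a commuting tuple of unitary matrices on a finite dimensional space, and the spectral theorem yields
\[
\|p(\Phi_1(\lambda),\dots,\Phi_n(\lambda))\|=\max\{|p(\z)|:\z\in\sigma(\Phi_1(\lambda),\dots,\Phi_n(\lambda))\}.
\]
It then suffices to show that $\sigma(\Phi_1(\lambda),\dots,\Phi_n(\lambda))\subseteq W\cap\overline{\D}^n$ for every $\lambda\in\mathbb{T}$. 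For a joint eigenvalue $\z=(z_1,\dots,z_n)$ with nonzero joint eigenvector $v\in\cle$, the BCL identity $\Phi_1(\mu)\cdots\Phi_n(\mu)=\mu I_{\cle}$ (which extends from $\D$ to $\overline{\D}$ by continuity of the polynomial symbols) applied at $\mu=\lambda$ forces $\lambda v=\underline{\z}\,v$, whence $\lambda=\underline{\z}$ and therefore $\z\in\sigma(\Phi_1(\underline{\z}),\dots,\Phi_n(\underline{\z}))=W$; unitarity of each $\Phi_i(\lambda)$ further forces $|z_i|=1$, placing $\z$ inside $W\cap\mathbb{T}^n\subseteq W\cap\overline{\D}^n$.

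The main potential obstacle is the clean matching of the joint spectrum with joint eigenvalues via the BCL factorization and the careful passage from the matrix-valued $H^\infty$ multiplier norm to a supremum of pointwise operator norms on $\mathbb{T}$; both are standard but they must be executed in the matrix-valued setting in such a way that nothing leaks outside the variety $W$. Once these three displays are chained, the refined von Neumann inequality falls out immediately.
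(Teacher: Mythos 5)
Your proposal is correct and follows essentially the same route as the paper's own proof: pass to the finite-dimensional BCL dilation from Theorem~\ref{Dilion_finite}, reduce the multiplier norm to a supremum of $\|p(\Phi_1(\lambda),\dots,\Phi_n(\lambda))\|$ over $\lambda\in\mathbb{T}$ (the paper does this via the unitary extension on $L^2_{\cle}(\mathbb{T})$, you via the standard formula for matrix-valued multiplier norms, which is the same fact), and then use that each fiber is a commuting tuple of unitary matrices whose joint eigenvalues lie in $W\cap\mathbb{T}^n$ because $\Phi_1(\lambda)\cdots\Phi_n(\lambda)=\lambda I_{\cle}$. The only cosmetic difference is that you make explicit the containment $\sigma(\Phi_1(\lambda),\dots,\Phi_n(\lambda))\subseteq W\cap\mathbb{T}^n$, which the paper leaves implicit.
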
 
 
\begin{proof}
Let the BCL $n$-tuple $(M_{\Phi_1}, \ldots, M_{\Phi_n})\in \mathfrak{B}^n_0(H^2_{\cle}(\D))$ be an isometric dilation of $T$ such that $\cle$ is finite dimensional. The existence of such a dilation is shown in Theorem \ref{Dilion_finite}. Moreover, it also follows from Theorem \ref{Dilion_finite} that  
\[T\cong (P_\clq M_{\Phi_{1}}|_\clq,\ldots, P_\clq M_{\Phi_{n}}|_\clq),\] 
where $\clq=\Pi(\clh)\subseteq H^2_{\cle}(\D)$ is a co-invariant subspace.
 Then, for $p\in\mathbb{C}[z_1, \ldots, z_n]$, 
\begin{align*}
    \|p(T)\|_{\clb(\clh)}&=\|P_\clq p( M_{\Phi_{1}},\ldots, M_{\Phi_{n}})|_\clq\|_{\clb(H^2_{\cle}(\D))}\\
    &\leq \| p( M_{\Phi_{1}},\ldots, M_{\Phi_{n}})\|_{\clb(H^2_{\cle}(\D))} \\ %\big(\text{as }\clq \subseteq H^2_{\cle}(\D)\big)\\
    %&\leq\| p( M_{\Phi_{1}(e^{i\theta})},\ldots, M_{\Phi_{n}(e^{i\theta})})\|_{\clb(L^2_{\cle}(\mathbb{T}))}\\
     &=\|M_{p(\Phi_{1}(z),\ldots, \Phi_{n}(z))}\|_{\clb(H^2_{\cle}(\D))}.
    % &=\sup_{\theta\in [0,2\pi]}\|p(\Phi_1(e^{i\theta}),\ldots, \Phi_n(e^{i\theta}))\|_{\clb(\cle)}.
     \end{align*}
 Now, it is easy to observe that
 \[\|M_{p(\Phi_{1}(z),\ldots, \Phi_{n}(z))}\|_{\clb(H^2_{\cle}(\D))}= \|p(\Phi_{1}(z),\ldots, \Phi_{n}(z))\|_{H^\infty_{\clb(\cle)}(\D)}.\]
 Clearly, the right hand side is equal to
 \[\sup_{\theta\in [0,2\pi]}\|p(\Phi_1(e^{i\theta}),\ldots, \Phi_n(e^{i\theta}))\|_{\clb(\cle)}.\]
 Thus, we have 
 \[\|p(T)\|_{\clb(\clh)}\leq\sup_{\theta\in [0,2\pi]}\|p(\Phi_1(e^{i\theta}),\ldots, \Phi_n(e^{i\theta}))\|_{\clb(\cle)}.\]
% $\|M_{p(\Phi_{1}(z),\ldots, \Phi_{n}(z))}\|_{\clb(H^2_{\cle}(\D))}\leq \|p(\Phi_{1}(z),\ldots, \Phi_{n}(z))\|_{H^\infty_{\clb(\cle)}(\D)} =\sup_{\theta\in [0,2\pi]}\|p(\Phi_1(e^{i\theta}),\ldots, \Phi_n(e^{i\theta}))\|_{\clb(\cle)}$.    
%Here we have used the fact that $M_{\phi_{i}(e^{i\theta})}$ on $L^2_{\cle}(\mathbb T)$ is a unitary extension of the isometry $M_{\Phi_i(z)}$ on $H^2_{\cle}(\D)$. 
Since $\text{dim}\,(\cle)<\infty$, for $z\in \mathbb{T}$, $(\Phi_1(z),\ldots, \Phi_n(z))$ is an $n$-tuple of commuting unitary matrices. Also since $\Phi_1(z)\cdots\Phi_n(z)=zI_{\cle}$ for all $z\in \overline{\D}$, it follows that for any $(z_1,\dots,z_n)\in \sigma(\Phi_1(z),\dots,\Phi_n(z))$,
\begin{align}\label{prod_z}
    \underline{\z}:=z_1\cdots z_n=z.
\end{align}
 Therefore,
 %for a fixed $z\in \mathbb{T}$,
\begin{align*}
   \sup_{z\in\mathbb T}\|p(\Phi_1(z),\ldots & , \Phi_n(z))\|_{\clb(\cle)} =\sup \{|p(z_1,\ldots,z_n)|: (z_1,\ldots,z_n)\in \sigma(\Phi_1(z),\ldots, \Phi_n(z)), z\in \mathbb T \}\\
    &=\sup \{|p(\z)|: \z=(z_1,\ldots,z_n)\in \mathbb T^n\cap  \sigma(\Phi_1(\underline{\z}),\ldots, \Phi_n(\underline{\z}))\} \, \, (\text{by }\eqref{prod_z})\\
   &= \sup_{\z\in W\cap {\mathbb T}^n} |p(\z)|\\
    & \leq \sup_{\z\in W\cap \overline{\D}^n} |p(\z)|,
\end{align*}
where $W$ is the one-dimensional symmetric algebraic variety as in ~\eqref{one_dim variety}. The above computation implies that $\sup_{\theta\in [0,2\pi]}\|p(\Phi_1(e^{i\theta}),\ldots, \Phi_n(e^{i\theta}))\|_{\clb(\cle)}\leq \sup_{\z\in W\cap \overline{\D}^n} |p(\z)|$. This completes the proof. 
\end{proof}

%{\color{red}Prove that after lifting the tuple also satisfies von-Neumann inequality}

\newsection{Commutant lifting theorem for $\mathfrak{B}^n_0(\clh)$ }\label{Sec4}

For $T=(T_1,\ldots, T_n)\in \clt^n(\clh)$, we denote by $\{T\}^\prime$ the set of all bounded operators on $\clh$ which commute with each $T_i$ ($i=1,\ldots,n$), that is, 
\[\{T\}^\prime:=\{X\in\clb(\clh): X T_i=T_i X, i=1,\ldots,n\}.\]
The foundation of the commutant lifting theorem for $\mathfrak{B}^n_0(\clh)$ is the explicit isometric dilation of an element $T\in \mathfrak{B}^n_0(\clh)$ to a BCL $n$-tuple $M\in \mathfrak{B}^n_0(H^2_{\cle}(\D))$ which we obtained in the previous section. As a result, our proof of the commutant lifting theorem hinges on the detailed analysis of the structure of $\{T\}^\prime$ as well as of $\{M\}^{\prime}$, where $T\in \mathfrak{B}^n_0(\clh)$ and $M\in \mathfrak{B}^n_0(H^2_{\cle}(\D))$ is a BCL $n$-tuple. To begin with, we consider the case of BCL $n$-tuple.

\begin{Proposition}\label{Form of commutant} 
Let $M=(M_{\Phi_1},\ldots, M_{\Phi_n})\in\mathfrak{B}^n_0(H^2_{\cle}(\D))$ be a BCL $n$-tuple corresponding to $(\cle, \ut{U}, \ut{P})$. Suppose that $\cle_i=\text{ran}\,P_i $ for all $i=1,\dots,n$. If $X$ is a contraction in $\{M\}^\prime$, then $X$ has the following block matrix form with respect to decomposition $H^2_{\cle}(\D)=H^2_{\cle_1}(\D)\oplus\cdots \oplus H^2_{\cle_n}(\D)$:
\begin{align}\label{Matrix_form}
    X=\begin{bmatrix}
M_{\Theta_1} & 0&\cdots&0\\
0 & M_{\Theta_2}&\cdots&0\\
%0&0& \cdots&0\\
\vdots &\vdots& \ddots&\vdots\\
0&0& \cdots&M_{\Theta_n}\end{bmatrix},
\end{align}
 where for all $i=1,\dots,n$, $\Theta_i$ is a contractive $\clb(\cle_i)$-valued analytic function on $\D$.
\end{Proposition}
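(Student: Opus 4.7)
The plan is to exploit the explicit block structure of the BCL tuple $M$ afforded by the hypothesis $M \in \mathfrak{B}^n_0(H^2_{\cle}(\D))$. Indeed, Proposition~\ref{main_Characterization} gives $U_i P_j = P_j U_i$ for all $i, j$, so the projections $P_1,\ldots,P_n$ are mutually orthogonal with sum $I_\cle$, and the decomposition $\cle = \cle_1 \oplus \cdots \oplus \cle_n$ reduces every $U_i$. Setting $U^{(j)}_i := U_i|_{\cle_j}$, each $M_{\Phi_i}$ takes the block diagonal form displayed in \eqref{BCL_Matrix} with respect to $H^2_{\cle}(\D) = H^2_{\cle_1}(\D) \oplus \cdots \oplus H^2_{\cle_n}(\D)$: the $(j,j)$-block equals $I_{H^2(\D)} \otimes U^{(j)}_i$ for $j \neq i$ and $M_z \otimes U^{(i)}_i$ for $j = i$.

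Write $X = (X_{kl})_{k, l = 1}^n$ accordingly, with $X_{kl}\colon H^2_{\cle_l}(\D) \to H^2_{\cle_k}(\D)$. Reading off the $(k,l)$-block of the commutation $XM_{\Phi_i} = M_{\Phi_i}X$ for each $i$ produces a family of intertwining relations. The first and most delicate step is to show $X_{kl} = 0$ whenever $k \neq l$. Choosing $i = k$, the $(k,l)$-block equation simplifies to
\[
X_{kl}\bigl(I_{H^2(\D)} \otimes U^{(l)}_k\bigr) = \bigl(M_z \otimes U^{(k)}_k\bigr)\, X_{kl}.
\]
Since $U^{(l)}_k$ and $U^{(k)}_k$ are unitaries, we may iterate to obtain
\[
X_{kl} = \bigl(M_z^m \otimes (U^{(k)}_k)^m\bigr)\, X_{kl}\, \bigl(I \otimes (U^{(l)}_k)^{*m}\bigr) \quad (m \geq 0),
\]
which forces $\text{ran}\, X_{kl} \subseteq z^m H^2_{\cle_k}(\D)$ for every $m$. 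Since $\bigcap_{m \geq 0} z^m H^2_{\cle_k}(\D) = \{0\}$, we conclude $X_{kl} = 0$.

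For the diagonal blocks, the $(i,i)$-entry of $XM_{\Phi_j} = M_{\Phi_j}X$ with $j \neq i$ yields $X_{ii}(I \otimes U^{(i)}_j) = (I \otimes U^{(i)}_j)X_{ii}$, and $j = i$ gives $X_{ii}(M_z \otimes U^{(i)}_i) = (M_z \otimes U^{(i)}_i)X_{ii}$. The identity $U_1 \cdots U_n = I_\cle$ restricts to $U^{(i)}_1 \cdots U^{(i)}_n = I_{\cle_i}$, so $U^{(i)}_i$ is a product of unitaries each of which commutes with $X_{ii}$; thus $X_{ii}$ also commutes with $I \otimes U^{(i)}_i$, and consequently with $(M_z \otimes U^{(i)}_i)(I \otimes U^{(i)*}_i) = M_z \otimes I_{\cle_i}$, i.e., with the shift on $H^2_{\cle_i}(\D)$. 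Therefore $X_{ii} = M_{\Theta_i}$ for some $\Theta_i \in H^\infty_{\clb(\cle_i)}(\D)$, and $\|\Theta_i\|_\infty = \|X_{ii}\| \leq \|X\| \leq 1$ gives contractivity.

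The main obstacle, as I anticipate, is the off-diagonal vanishing; everything else is essentially unwinding \eqref{BCL_Matrix} together with the Berger--Coburn--Lebow constraints of Theorem~\ref{BCL main}. The iteration trick succeeds precisely because $M_z$ appears in exactly one diagonal block of $M_{\Phi_k}$, namely the $(k,k)$-block, which allows one to push $\text{ran}\, X_{kl}$ arbitrarily far into the shift filtration of $H^2_{\cle_k}(\D)$.
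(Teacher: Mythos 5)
Your proof is correct, but it takes a genuinely different route from the paper's. The paper first invokes $XM_z=M_zX$ (since $M_z=M_{\Phi_1}\cdots M_{\Phi_n}$) to write $X=M_\Theta$ for some $\Theta\in H^\infty_{\clb(\cle)}(\D)$, expands $\Theta(z)=\sum_k A_kz^k$, and then extracts from the coefficient identities of $\Phi_i(z)\Theta(z)=\Theta(z)\Phi_i(z)$, by an induction on $k$, that each $\cle_i^\perp$ reduces every $A_k$; the block diagonal form of $X$ and the identification of the diagonal blocks as multipliers then follow simultaneously. You instead never pass to the symbol at the outset: you write $X$ as a block matrix over $H^2_{\cle_1}(\D)\oplus\cdots\oplus H^2_{\cle_n}(\D)$, kill the off-diagonal blocks by iterating $X_{kl}=(M_z^m\otimes (U^{(k)}_k)^m)\,X_{kl}\,(I\otimes (U^{(l)}_k)^{*m})$ and using $\bigcap_m z^mH^2_{\cle_k}(\D)=\{0\}$, and only then identify each $X_{ii}$ as a multiplier by showing it commutes with $M_z\otimes I_{\cle_i}$ (the step where you deduce commutation with $I\otimes U^{(i)}_i$ from $U^{(i)}_1\cdots U^{(i)}_n=I_{\cle_i}$ is a nice touch and is exactly what makes the shift appear). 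Both arguments rest on the same structural input, namely $U_iP_j=P_jU_i$ from Proposition~\ref{main_Characterization} and the block form \eqref{BCL_Matrix}; your range-intersection argument avoids the coefficient induction entirely and is arguably cleaner, while the paper's symbol-level computation has the minor advantage of producing the global symbol $\Theta=\oplus_i\Theta_i$ explicitly from the start.
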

\begin{proof}
For $X\in \{M\}^\prime$, since $X M_z=M_z X$, then there exists $\Theta \in H_{\clb(\cle)}^\infty (\D)$ such that $X=M_{\Theta}$. Let $\Theta(z)=\sum_{ k \geq  0}A_k z^k$, where $A_k \in \clb(\cle)$ ($k \geq 0$).
%and therefore, $ M_\Theta=\sum_{ k \geq  0}M_z^k \otimes A_k$.
We fix $i\in \{1,\dots,n\}$. Since $X M_{\Phi_i}=M_{\Phi_i} X$, then the identity $ \Phi_i(z) \Theta(z)=\Theta(z) \Phi_i(z)$ implies that for all $z\in \D$,
 \begin{align*}
   % \Phi_i(z) \Theta(z)&=\Theta(z) \Phi_i(z)\\
    U_i P_i^\perp A_0 +\sum_{ k \geq  1}( U_i P_i^\perp A_k+ U_i P_i A_{k-1}) z^{k} = A_0  U_i P_i^\perp  +\sum_{ k \geq  1}(A_k U_i P_i^\perp +A_{k-1} U_i P_i) z^{k}. 
\end{align*}
 Comparing the coefficients, we obtain
 \begin{align}\label{Power_coffe}
      U_i P_i^\perp A_0=A_0  U_i P^\perp_i\,\, \text{and}\,\, U_i P_i^\perp A_k+ U_i P_i A_{k-1}=A_k U_i P_i^\perp +A_{k-1} U_i P_i \,\, (k\geq 1).
 \end{align}
Note that as $M\in \mathfrak{B}^n_0(H^2_{\cle}(\D))$, we have $U_i P_j=P_j U_i$ for all $i,j=1,\dots,n$. Then the identity $U_i P_i^\perp A_0=A_0  U_i P^\perp_i$ together with $A^*_0 P_i^\perp U_i^* = P^\perp_iU_i^*A_0^*$, obtained by taking adjoint, imply that $\cle_i^\perp$ is a reducing subspace for $A_0$. That is, $A_0 P_i^{\perp}=P_i^{\perp}A_0$.  Letting $k=1$ in \ref{Power_coffe} we get \[
U_i P_i^\perp A_1+ U_i P_i A_0=A_1 U_i P_i^\perp +A_0 U_i P_i.\]
 Consequently, 
 \begin{align*}
   U_i P^\perp_i A_1P^\perp_i= (U_i P_i^\perp A_1+ U_i P_i A_0)P_i^\perp=(A_1 U_i P_i^\perp +A_0 U_i P_i)P_i^\perp
     =A_1 U_i P^\perp_i
     %\ (\text{as } A_0P^\perp_i=P_i^{\perp}A_0 ).
     \end{align*}
By the same argument as before, the above identity together with $P_i^\perp A^*_1 P^\perp_i U^*_i=P^\perp_i U^*_i A_1^*$ imply that $\cle_i^\perp$ is a reducing subspace for $A_1$. Proceeding in the same way, we can show that $\cle_i^\perp$ is a reducing subspace for $A_k$ for all $k\ge 2$. We set $A^{(i)}_k:=A_k|_{\cle_i}$ for all $k\ge 0$, and $\Theta_i(z):=\sum_{k\geq0}A^{(i)}_k z^k$. Then $\Theta_i\in H_{\clb(\cle_i)}^\infty (\D)$. Since $i$ was arbitrarily chosen, we have $\Theta_i\in H_{\clb(\cle_i)}^\infty (\D)$ for all $i=1,\dots,n$ and it is now easy to see that
\[
\Theta(z)= \oplus_{i=1}^n \Theta_i(z), \text{ and therefore } M_{\Theta}=\oplus_{i=1}^n M_{\Theta_i}.
\]
This completes the proof. 
\end{proof}
As a consequence of the above result, we find certain decomposition of the defect operator next. 
%For $A\in \clb(\clh)$, we denote by $C_A$ the completely positive map on $\clb(\clh)$ induced by $A$, which is defined as $C_A(X)=AXA^*$ for all $X\in \clb(\clh)$.
\begin{Proposition} 
Let $M=(M_{\Phi_1},\ldots, M_{\Phi_n})\in\mathfrak{B}^n_0(H^2_{\cle}(\D))$ be a BCL $n$-tuple corresponding to $(\cle,\ut{U}, \ut{P})$. If $X$ is a contraction in $\{M\}^\prime$, then there exist positive operators $G_1, \ldots, G_n$ on $H^2_{\cle}(\D)$ such that 
\begin{align*}
  I-X X^*=G_1 +\cdots + G_n\quad \text{and} \quad  (I-C_{M_{\Phi_j}})(G_i)=G_i -M_{\Phi_j} G_i M^*_{\Phi_j} = 0 \quad (i\neq j).
\end{align*}   
\end{Proposition}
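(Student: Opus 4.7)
The plan is to leverage the block diagonal structure of $X$ from the previous proposition together with the block diagonal representation of the BCL tuple established in equation \eqref{BCL_Matrix}, and then simply declare $G_i$ to be the block-diagonal piece of $I-XX^*$ corresponding to the summand $H^2_{\cle_i}(\D)$.

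More precisely, by Proposition~\ref{Form of commutant}, with respect to the decomposition $H^2_{\cle}(\D)=H^2_{\cle_1}(\D)\oplus\cdots\oplus H^2_{\cle_n}(\D)$, the commutant $X$ has the form $X=M_{\Theta_1}\oplus\cdots\oplus M_{\Theta_n}$ for contractive $\clb(\cle_i)$-valued analytic functions $\Theta_i$. Hence
\[
I-XX^*=\bigoplus_{i=1}^{n}\bigl(I_{H^2_{\cle_i}(\D)}-M_{\Theta_i}M_{\Theta_i}^*\bigr).
\]
I would then define, for each $i=1,\dots,n$, the operator $G_i\in\clb(H^2_{\cle}(\D))$ to be $I_{H^2_{\cle_i}(\D)}-M_{\Theta_i}M_{\Theta_i}^*$ on the summand $H^2_{\cle_i}(\D)$ and zero on every other summand. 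Each $G_i$ is manifestly positive (since $M_{\Theta_i}$ is a contraction), and the identity $I-XX^*=G_1+\cdots+G_n$ is immediate from the displayed direct sum above.

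The remaining task is the invariance identity $G_i-M_{\Phi_j}G_iM_{\Phi_j}^*=0$ for $i\neq j$. Since $M\in \mathfrak{B}^n_0(H^2_{\cle}(\D))$, Proposition~\ref{main_Characterization} gives $U_iP_j=P_jU_i$ for all $i,j$, so \eqref{BCL_Matrix} applies and each summand $H^2_{\cle_l}(\D)$ is reducing for every $M_{\Phi_j}$; moreover for $i\neq j$ the restriction $M_{\Phi_j}\big|_{H^2_{\cle_i}(\D)}=I_{H^2(\D)}\otimes U_j^{(i)}$ is a \emph{unitary}. Since $G_i$ is supported on the reducing subspace $H^2_{\cle_i}(\D)$, the conjugation $M_{\Phi_j}G_iM_{\Phi_j}^*$ also lives on $H^2_{\cle_i}(\D)$, so it suffices to check the identity on that summand alone. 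The hypothesis $XM_{\Phi_j}=M_{\Phi_j}X$, restricted to $H^2_{\cle_i}(\D)$, yields that $M_{\Theta_i}$ commutes with $I_{H^2(\D)}\otimes U_j^{(i)}$; taking adjoints and multiplying, $M_{\Theta_i}M_{\Theta_i}^*$ commutes with this unitary, and the unitary character of $I_{H^2(\D)}\otimes U_j^{(i)}$ then gives the desired equality.

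I do not expect any serious obstacle: the proposition is essentially a bookkeeping consequence of the block diagonal structures already in hand. The only place one needs a small amount of care is in checking that $H^2_{\cle_i}(\D)$ is truly reducing (not merely invariant) for each $M_{\Phi_j}$ with $j\neq i$, which is guaranteed by the diagonal form \eqref{BCL_Matrix} available under the hypothesis $U_iP_j=P_jU_i$ supplied by Proposition~\ref{main_Characterization}.
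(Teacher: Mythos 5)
Your proposal is correct and follows essentially the same route as the paper: both define $G_i$ as the block-diagonal operator equal to $I_{H^2_{\cle_i}(\D)}-M_{\Theta_i}M_{\Theta_i}^*$ on the $i$-th summand and zero elsewhere, and both deduce the identity $M_{\Phi_j}G_iM_{\Phi_j}^*=G_i$ $(i\neq j)$ from the block-diagonal form \eqref{BCL_Matrix} together with $XM_{\Phi_j}=M_{\Phi_j}X$. The paper leaves the last verification as ``easily verified,'' and your observation that $M_{\Phi_j}|_{H^2_{\cle_i}(\D)}=I_{H^2(\D)}\otimes U_j^{(i)}$ is a unitary commuting with $M_{\Theta_i}$ is exactly the intended bookkeeping.
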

\begin{proof}
Since $X$ is a contraction in $\{M\}^\prime$, using Proposition \ref{Form of commutant}, $X$ has the block matrix from as in \ref{Matrix_form} for some $\Theta_i \in H_{\clb(\cle_i)}^\infty (\D)$ $(i=1,\dots,n)$.
We consider,
    \[G_i=\begin{bmatrix}
0 & \dots &0&\dots&0\\
%0& I_{H^2(\D)}\otimes U^{(2)}_i &\dots &0 &\dots &0\\
\vdots &\ddots &\vdots &\ddots &\vdots \\
0 &\dots & I_{H^2_{\cle_i}(\D)}- M_{\Theta_i}M^*_{\Theta_i}&\dots &0\\
\vdots  &\ddots & \vdots &\ddots &\vdots\\
0 &\dots &0 &\dots &  0
\end{bmatrix},\quad (i=1,\dots,n)\]
where the block matrix representation is taken with respect to the decomposition $H^2_{\cle}(\D)=H^2_{\cle_1}(\D)\oplus\cdots \oplus H^2_{\cle_n}(\D)$. 
 Since $M_{\Theta_i}$ is a contraction, it follows that $G_i\ge 0$ and 
 $I-XX^*=G_1+\dots+G_n$. Now for $i\neq j$, using the block matrix form \ref{BCL_Matrix} of $M_{\Phi_j}$ and $M_{\Phi_j}X=XM_{\Phi_j}$, it can be easily verified that $M_{\Phi_j}G_iM_{\Phi_j}^*=G_i$. The proof now follows. 
 %Then, using block matrix form \ref{BCL_Matrix} of the BCL $n$-tuple, we get
 %$ (I-C_{M_{\Phi_j}})(G_i)=G_i-M_{\Phi_j} G_i M^*_{\Phi_j}=0$, since  $ (I_{H^2(\D)}\otimes U_j) M_{\Theta_i} M^*_{\Theta_i} (I_{H^2(\D)}\otimes U^*_j)=M_{\Theta_i} M^*_{\Theta_i}$. 
 \end{proof}

Let $T=(T_1,\ldots, T_d)\in \mathfrak{B}^n_0(\clh)$ and $M=(M_{\Phi_1}, \ldots,  M_{\Phi_n})\in \mathfrak{B}^n_0(H^2_{\cle}(\D))$ be an isometric dilation of $T$. That is, there exist an isometry $\Pi: \clh \to H^2_\cle(\D)$ such that 
\[\Pi T^*_i=M_{\Phi_i}^* \Pi \quad (i=1,\ldots,n).\] For a contractive $X\in \{T\}^\prime$, if there exists a contractive multiplier $\Theta \in H^\infty_{\clb(\cle)}(\D)$ such that $M_{\Theta}\in \{M\}^\prime$ and $\Pi X^*= M^*_{\Theta} \Pi$, then by the above proposition $I-M_{\Theta}M_{\Theta}^*= G_1+\dots+G_n$ for some positive operators $G_i$ on $H^2_{\cle}(\D)$ such that $(I-C_{M_{\Phi_j}})(G_i)=G_i -M_{\Phi_j} G_i M^*_{\Phi_j} = 0 \quad (i\neq j)$. Then observe that 
\begin{align*}
    I-X X^*= \Pi^*(I-M_\Theta M^*_{\Theta})\Pi 
    =\Pi^* G_1 \Pi +\cdots +\Pi^* G_n \Pi
\end{align*}
and 
\begin{align*}
    (I-C_{T_j})(\Pi^* G_i \Pi)= \Pi^*  (I-C_{M_{\Phi_j}})(G_i)\Pi=0\quad (i\neq j).
\end{align*}
Thus, the condition that there should exist positive operators $\tilde{G_1},\dots \tilde{G_n}$ on $\clh$ for which 
\[
I-XX^*=\tilde{G_1}+\cdots +\tilde{G_n}, \text{ and }  (I-C_{T_j})(\tilde{G_i})=0,
\quad (i\neq j)\]
is necessary for the commutant lifting theorem to hold. In fact, we show below that every commutant of $T\in \mathfrak{B}^n_0(\clh)$ indeed satisfies the necessary condition, for which we need the following lemma.

\begin{Lemma}\label{Lemma_ness}
Let $T=(T_1,\ldots, T_n)\in \clt^n(\clh)$ be such that $ \Delta^{\bm{e}_i+\bm{e}_j}_{T}=0$ $(i\neq j)$. Then $I-P_T P_T^* =\sum_{i=1}^n D^2_{T_i}$, where $P_T=T_1\cdots T_n$ and $D_{T_i}$ is the defect operator of $T_i$.
\end{Lemma}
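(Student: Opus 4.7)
The plan is to combine a simple telescoping identity with repeated application of the hypothesis $\mathbb{S}_2^{-1}(T_i,T_j)=0$, which rewrites the defect operator $D_{T_i}^2$ as being invariant under conjugation by any $T_j$ with $j\neq i$.

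First, I would record the elementary scalar identity
\[
1 - a_1 a_2 \cdots a_n = \sum_{i=1}^n a_1 a_2 \cdots a_{i-1}(1-a_i),
\]
which is a straightforward telescoping sum. Setting $a_i = z_i \bar{w}_i$ and applying Agler's hereditary functional calculus (exactly as in the proofs of Lemma~\ref{Breh_0} and Lemma~\ref{S=0}) yields the operator identity
\[
I - P_T P_T^* = \sum_{i=1}^n T_1 T_2 \cdots T_{i-1}\, D_{T_i}^2 \, T_{i-1}^* \cdots T_2^* T_1^*,
\]
with the convention that the $i=1$ summand is simply $D_{T_1}^2$.

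Next, the hypothesis $\mathbb{S}_2^{-1}(T_i,T_j)=I-T_iT_i^*-T_jT_j^*+T_iT_jT_i^*T_j^*=0$ together with the commutation $T_iT_j=T_jT_i$ can be rewritten (by moving $T_jT_j^*$ and factoring) as
\[
D_{T_i}^2 = T_j D_{T_i}^2 T_j^* \qquad (i\neq j).
\]
Iterating this identity, one obtains $T_{j_1}\cdots T_{j_k} D_{T_i}^2 T_{j_k}^*\cdots T_{j_1}^* = D_{T_i}^2$ for any indices $j_1,\dots,j_k$ distinct from $i$. Applied to the sequence $j_\ell = \ell$ for $\ell=1,\dots,i-1$, this collapses the $i$-th summand in the display above to $D_{T_i}^2$.

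Substituting back, we conclude $I - P_T P_T^* = \sum_{i=1}^n D_{T_i}^2$ as required. There is no significant obstacle in this argument; the only point to be careful about is that the key invariance $D_{T_i}^2 = T_j D_{T_i}^2 T_j^*$ uses both the $\mathbb{S}_2^{-1}=0$ relation and the commutativity $T_iT_j=T_jT_i$ to group $T_iT_jT_i^*T_j^* = T_j(T_iT_i^*)T_j^*$; this is the single place where the hypothesis genuinely enters.
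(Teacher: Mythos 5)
Your proof is correct, but it takes a genuinely different route from the paper's. The paper applies Agler's hereditary functional calculus to the full inclusion--exclusion expansion of $1-z_1\bar w_1\cdots z_n\bar w_n$ in terms of the products $\prod_j(1-z_{i_j}\bar w_{i_j})$, which produces
\[
I-P_TP_T^*=\sum_{i=1}^n D_{T_i}^2-\sum_{i<j}\mathbb{S}_2^{-1}(T_i,T_j)+\sum_{i<j<k}\mathbb{S}_3^{-1}(T_i,T_j,T_k)-\cdots,
\]
and then annihilates every term beyond the first sum by invoking Lemma~\ref{Breh_0}, i.e.\ the inductively established fact that $\mathbb{S}_k^{-1}=0$ for all $k\ge 2$. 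You instead use the telescoping decomposition $I-P_TP_T^*=\sum_{i}T_1\cdots T_{i-1}D_{T_i}^2T_{i-1}^*\cdots T_1^*$ --- which is exactly the identity the paper itself uses later, in the proof of Theorem~\ref{Dilion_finite}, to define the isometry $V$ --- and collapse each summand via the invariance $D_{T_i}^2=T_jD_{T_i}^2T_j^*$ $(j\ne i)$, which is a direct algebraic restatement of the hypothesis $\mathbb{S}_2^{-1}(T_i,T_j)=0$ together with commutativity. Your argument is the more self-contained and elementary of the two: it uses only the order-two hypothesis, needs no induction, and in fact does not require the hereditary calculus at all since the telescoping identity can be verified by direct computation. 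The paper's version, by contrast, is a one-line corollary of Lemma~\ref{Breh_0}, machinery it must establish anyway to place $\mathfrak{B}^n_0(\clh)$ inside the Brehmer class. Both arguments are valid.
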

\begin{proof}
 Using Agler's hereditary functional calculus (\cite{AG1}) to the following identity, for $(z_1, \ldots, z_n)$ and $(w_1, \ldots, w_n) \in \D^n$, 
\begin{align*}
     (1-z_1\Bar{w}_1 \cdots z_n \Bar{w}_n)&=\sum_{i=1}^n (1-z_i \Bar{w}_i)-\sum^n_{\underset{i< j}{i,j=1}}(1-z_i\Bar{w}_i)(1-z_j\Bar{w}_j)\\
    % +\sum_{\underset{i< j< k}{i,j,k =1}}^n (1-z_i\Bar{w}_i)(1-z_j \Bar{w}_j)(1-z_k\Bar{w}_k)\\
    & \quad \quad \quad - \cdots (-1)^{n-1} (1-z_1\Bar{w}_1)\cdots (1-z_n \Bar{w}_n),
\end{align*}
 we get
\begin{align*}
       I -P_T P_T^*=\sum_{i=1}^n D^2_{T_i}-\sum^n_{\underset{i< j}{i,j=1}} \Delta^{\bm{e}_i+\bm{e}_j}_{T}+
    \sum_{\underset{i< j< k}{i,j,k =1}}^n \Delta^{\bm{e}_i+\bm{e}_j+\bm{e}_k}_{T}- \cdots (-1)^{n-1} \Delta^{\bm{e}_i+\cdots+\bm{e}_n}_{T}.
\end{align*}
Hence, by Lemma \ref{Breh_0}, $I-P_T P_T^* =\sum_{j=1}^n D^2_{T_j}$.
\end{proof}

For $T=(T_1,\ldots,T_n)\in\clt^n(\clh)$, we denote by $\Tilde{T}_j$ the $(n-1)$-tuple obtained from $T$ by deleting $T_j$, that is, 
\[\Tilde{T}_j:=(T_1,\ldots, T_{j-1},  T_{j+1}, \ldots, T_n).\]
%Also, we denote for $\alpha=(\alpha_1, \ldots, \alpha_{n-1})\in \Z^{n-1}_+$,
%\[\Tilde{T}_j^\alpha=T^{\alpha_1}_1\cdots T^{\alpha_{j-1}}_{j-1}  T^{\alpha_j}_{j+1} \cdots T^{\alpha_{n-1}}_n \quad \text{and}\quad \Tilde{T}_j^{*\alpha}=T^{* \alpha_1}_1\cdots T^{ * \alpha_{j-1}}_{j-1} T^{* \alpha_j}_{j+1} \cdots T^{* \alpha_{n-1}}_n.\]
%We now show that every commutant of $T\in \mathfrak{B}^n_0(\clh)$ satisfies the condition necessary for the commutant lifting theorem. 
\begin{Theorem}\label{Nessry_cond}
Let $T=(T_1,\ldots, T_n)\in \mathfrak{B}^n_0(\clh)$. If $X$ is a contraction in $\{T\}^\prime$, then there exist positive operators $G_1, \ldots, G_n$ on $\clh$ such that 
\begin{align}\label{BLTT}
  I-X X^*=G_1 +\cdots + G_n\,\, \text{and }  (I-C_{T_i})(G_j)=G_j -T_i G_j T^*_i = 0 \quad (i\neq j).
\end{align}  
\end{Theorem}
\begin{proof}
First we define
\[G_j=\text{SOT}-\lim_{\alpha=(\alpha_1,\dots,\alpha_{n-1})\to \infty} \Tilde{T}_j^\alpha (I-XX^*)\Tilde{T}_j^{*\alpha}\quad (j=1,\ldots, n).\]
Clearly, the above SOT limit exists as $T_i$'s are contractions. 
It then follows from the definition that $(I-C_{T_i})(G_j)=G_j -T_i G_j T^*_i = 0$ for all $i\neq j$.
Now, 
\begin{align*}
    \sum_{j=1}^n G_j=\text{SOT}-\lim_{\alpha\to \infty} \sum_{j=1}^n \Tilde{T}_j^\alpha (I-XX^*)\Tilde{T}_j^{*\alpha}
    =\text{SOT}-\lim_{\alpha\to \infty} \big( \sum_{j=1}^n \Tilde{T}_j^\alpha \Tilde{T}_j^{*\alpha}-X(\sum_{j=1}^n \Tilde{T}_j^\alpha\Tilde{T}_j^{*\alpha})X^*\big). 
    \end{align*}
To complete the proof, it is enough to show that $\text{SOT}-\lim_{\alpha\to \infty} \sum_{j=1}^n \Tilde{T}_j^\alpha \Tilde{T}_j^{*\alpha}=I_{\clh}$. Since $\Delta^{\bm{e}_i+\bm{e}_j}_{T}=0$, that is, $D_{T_j}^2=T_i D^2_{T_j}T^*_i$ ($i\neq j$), then
\[D_{T_j}^2=\Tilde{T}_j^\alpha D^2_{T_j} \Tilde{T}_j^{*\alpha}\quad (j=1,\ldots,n, \alpha=(\alpha_1, \ldots, \alpha_{n-1})\in \Z^{n-1}_+).
\]
Taking sum over $j$ in both sides to the above identity, we get for all $\alpha\in \Z^{n-1}_+$
\begin{align*}
   \sum_{j=1}^n \Tilde{T}_j^\alpha\Tilde{T}_j^{*\alpha}& =\sum_{j=1}^n D^2_{T_j}+  \sum_{j=1}^n \Tilde{T}_j^\alpha T_j T^*_j \Tilde{T}_j^{*\alpha}\\
   &=I_{\clh}-P_T P_T^* + \sum_{j=1}^n \Tilde{T}_j^\alpha T_j T^*_j \Tilde{T}_j^{*\alpha}\quad(\text{by Lemma  } \ref{Lemma_ness}).
   \end{align*}
Consequently, for $\alpha\ge \mathbf e=(1,\dots, 1)$,
\begin{align*}
  I_{\clh}-\sum_{j=1}^n \Tilde{T}_j^\alpha\Tilde{T}_j^{*\alpha}=
  P_T(I_{\clh}  - \sum_{j=1}^n \Tilde{T}_j^{\alpha-e} \Tilde{T}_j^{*{\alpha-e}}) P_T^*.
\end{align*}
Set $Y_{\alpha}:= I_{\clh}-\sum_{j=1}^n \Tilde{T}_j^\alpha\Tilde{T}_j^{*\alpha}, (\alpha\in \Z^{n-1}_+)$. By the above identity, for $\alpha\ge \mathbf e$, if $k=\text{min}\{\alpha_1,\dots,\alpha_{n-1}\}$, then 
\begin{align*}
Y_{\alpha}= P_T Y_{\alpha-\mathbf e}P_T^*=\cdots=P^k_{T} Y_{\alpha-k\mathbf e}P_T^{* k}.
\end{align*} 
Thus, for $h\in\clh$,
\begin{align*}
    \|Y_\alpha h\|=\|P_T^k Y_{\alpha-ke} P_T^{*k} h\|\leq \|Y_{\alpha-ke}\| \|P_T^{*k} h\|\leq (n+1)\|P_T^{*k} h\|.
\end{align*}
Since $P_T$ is pure, it then follows that $\text{SOT}-\lim_{\alpha \to \infty} Y_\alpha=0$.
%Therefore, $\text{SOT}-\lim_{\alpha\to \infty} \sum_{j=1}^n \Tilde{T}_j^\alpha \Tilde{T}_j^{*\alpha}=I$. 
 This completes the proof.
%{\color{blue} The above claim is proved for $n=2,3 $}
\end{proof}
%{\color{magenta}The condition in the above theorem is also sufficient for commutant lifting theorem to hold, which is one of the main results of this section.} {\color{blue}
The identity \eqref{BLTT}, in the above theorem, is crucial for commutant lifting theorem to hold, which is one of the main results of this section.
The reader familiar with the work of ~\cite{BLTT} could easily recognize the identity \eqref{BLTT}, 
%condition that any commuatnt of $\{T\}'$ satisfies, 
and must be tempted to use ~\cite{BLTT} to prove the commutant lifting theorem. Despite of the similarity, one can not use ~\cite{BLTT} directly as the isometric dilation of $T$ as well as the dilation map are completely different. As a result, our proof is entirely different compared to that of ~\cite{BLTT}.     
Before going into the commutant lifting theorem, we need couple of preparatory results. The first one of them is of independent interest. It is well known that given any contractive $\clb(\cle)$-valued analytic function $\Theta$, there exist an auxiliary Hilbert space $\clf$ and a contraction
\[
W=\begin{bmatrix}
A& B\\
C& D
\end{bmatrix}: \cle\oplus\clf \to \cle\oplus \clf
\]
such that $\Theta(z)=\tau_{W}(z)$, where $\tau_{W}$ is the transfer function corresponding to $W$, that is, 
\[
\tau_{W}(z)=A+ zB(I_{\clf}-zD)^{-1}C\quad (z\in\D). 
\]
Moreover, the contraction $W$ can be taken to be a unitary if necessary. 
This result is known as transfer function realizations for operator valued bounded analytic functions on the unit disc.
Recall that a contraction $T$ on $\clh$ is said to be a \emph{completely non-coisometric} if there is no non-trivial invariant subspace of $T^*$ on which $T^*$ is an isometry. A contraction $T$ is said to be a \emph{completely non-unitary} (cnu) if there is no non-zero $T$-reducing subspace $\cls$ of $\clh$ such that $T|_{\cls}$ is a unitary operator.  For a contraction $T$ on $\clh$, there exist unique $T$-reducing subspaces $\clh_{cnu}$ and $\clh_{u}$ such that $\clh= \clh_{cnu} \oplus \clh_{u}$, $T_{cnu}:=T|_{\clh_{cnu}}$ is a cnu and $T_{u}:=T|_{\clh_{u}}$ is a unitary (see \cite[Theorem 3.2]{NF}). Thus, $T$ has the following block matrix representation of $T$ with respect to the decomposition $\clh_{cnu}\oplus\clh_{u}$: 
\[
T=\begin{bmatrix}T_{cnu}& 0\\
0& T_{u}
\end{bmatrix}.
\]
Such a decomposition is commonly refer as canonical decomposition of $T$.
If $U\in\clb(\cle)$ is an isometry, in the result below, we find a necessary and sufficient condition for the commutativity of $\tau_{W}(z)$ and $U$ for all $z\in\D$ in terms of $W$ and $U$.

\begin{Theorem}\label{comm_lifting}
Let $\tau_{W}(z)=A+zB(I-z D)^{-1} C$ be the transfer function corresponding to a unitary  $W=\begin{bmatrix}
A& B\\
C& D
\end{bmatrix}$ on $\cle \oplus \clf$ for some Hilbert spaces $\cle$ and $\clf$, and let $U$ be an isometry on $\cle$. Let
\[
D=\begin{bmatrix}D_{1}& 0\\
0& D_{2}
\end{bmatrix}
\]
be the canonical decomposition of $D$ into the cnu part $D_1$ and the unitary part $D_2$ with respect to the decomposition $\clf=\clf_1\oplus \clf_2$.
Consider the following statements.
\begin{enumerate}
    \item[\textup{(1)}] $M_{\tau_{W}} (I_{H^2(\D)}\otimes U)=(I_{H^2(\D)}\otimes U) M_{\tau_{W}}$.
    \item[\textup{(2)}] There exists a contraction $Y$ on $\clf$ such that $W \Tilde{U}=\Tilde{U} W$, where $\Tilde{U}=\begin{bmatrix}
U& 0\\
0& Y
\end{bmatrix}$. 
\end{enumerate}
Then $(2)\Longrightarrow (1)$. Moreover, if $D_1$ is completely non-coisometric,
%or $\clf$ is finite dimensional 
then $(1) \Longrightarrow (2)$.
\end{Theorem}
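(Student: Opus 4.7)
The direction $(2)\Rightarrow (1)$ is a short computation. Expanding $W\tilde U=\tilde U W$ block-wise gives the four intertwining relations $AU=UA$, $BY=UB$, $CU=YC$, $DY=YD$. The last one yields $(I_\clf-zD)^{-1}Y=Y(I_\clf-zD)^{-1}$ for every $z\in\D$, and hence
\[\tau_{W}(z)U=AU+zB(I_\clf-zD)^{-1}CU=UA+zUB(I_\clf-zD)^{-1}C=U\tau_{W}(z),\]
which immediately gives $M_{\tau_{W}}(I_{H^{2}(\D)}\otimes U)=(I_{H^{2}(\D)}\otimes U)M_{\tau_{W}}$.

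For $(1)\Rightarrow (2)$, I would first Taylor-expand the identity $\tau_{W}(z)U=U\tau_{W}(z)$ and match coefficients to obtain
\[AU=UA\quad\text{and}\quad BD^{k}CU=UBD^{k}C\qquad (k\ge 0).\qquad(\star)\]
The plan is to manufacture $Y$ from what I shall call the \emph{dual observability operator} $\Omega:\clf\to H^{2}_\cle(\D)$, $\Omega(f):=\sum_{k\ge 0}z^{k}C^{*}D^{*k}f$. From $CC^{*}+DD^{*}=I_\clf$ (the bottom-right block of $WW^{*}=I$) one gets $\|C^{*}g\|^{2}=\|g\|^{2}-\|D^{*}g\|^{2}$, and a telescoping computation gives $\|\Omega(f)\|^{2}=\|f\|^{2}-\lim_{k}\|D^{*k}f\|^{2}$. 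In the pure case this limit vanishes, so $\Omega$ is an isometry, its adjoint $\Omega^{*}:H^{2}_\cle(\D)\to\clf$ is a coisometry with $\Omega^{*}(z^{k}\eta)=D^{k}C\eta$, and $D\Omega^{*}=\Omega^{*}M_{z}$ holds by inspection.

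Setting $\hat U:=I_{H^{2}(\D)}\otimes U$, the operator $Y$ is then defined on $\clf$ by $Y\Omega^{*}:=\Omega^{*}\hat U$. Granting well-definedness, contractivity $\|Y\|\le 1$ is immediate from the coisometric nature of $\Omega^{*}$ and the isometric nature of $\hat U$. The three missing intertwiners follow by direct checks: $YC=CU$ from $\Omega^{*}\eta=C\eta$; $DY=YD$ from $D\Omega^{*}=\Omega^{*}M_{z}$ combined with $M_{z}\hat U=\hat U M_{z}$; and $BY=UB$ from the identity $B\Omega^{*}\hat U=UB\Omega^{*}$, which is precisely $(\star)$ at the level of elementary tensors. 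The finite-dimensional case reduces to the pure one via a Wold-type splitting of $D^{*}$: writing $D=D_{u}\oplus D_{p}$ with $D_{u}$ unitary and $D_{p}^{*k}\to 0$, the identities $B^{*}B+D^{*}D=I_\clf$ and $CC^{*}+DD^{*}=I_\clf$ force $B,C$ to vanish on the unitary summand; thus $W=W'\oplus D_{u}$, the pure construction produces $Y'$ on the $D_p$-part, and one sets $Y=Y'\oplus 0$ on $\clf$.

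The principal obstacle is the well-definedness of $Y$, equivalently the invariance $\hat U\ker\Omega^{*}\subseteq\ker\Omega^{*}$, or dually $\hat U^{*}\,\mathrm{ran}\,\Omega\subseteq\mathrm{ran}\,\Omega$. A naive application of $(\star)$ shows only that $\Omega^{*}\hat U g$ lies in $\bigcap_{k}\ker(BD^{k})$ when $g\in\ker\Omega^{*}$, and this intersection need not be $\{0\}$ (it equals $\clf$ whenever $B=0$). The resolution, which is the technical heart of the argument, is to exhibit a preimage in $\clf$ for $\hat U^{*}\Omega f$ by exploiting the remaining colligation identities $A^{*}B+C^{*}D=0$ and $AC^{*}+BD^{*}=0$ coming from $W$ being unitary, together with $(\star)$, and by using the $M_{z}^{*}$-invariant (Beurling--Lax type) structure of $\mathrm{ran}\,\Omega$. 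It is precisely at this step that the purity of $D$ (or, in the alternative hypothesis, the finite-dimensionality of $\clf$) enters in an essential way.
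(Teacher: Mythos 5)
Your $(2)\Rightarrow(1)$ argument is correct and coincides with the paper's. For $(1)\Rightarrow(2)$, however, there is a genuine gap, and you have located it yourself: the well-definedness of $Y$. Defining $Y$ through $Y\Omega^*=\Omega^*\hat{U}$ (equivalently, $Y=\Omega^*\hat{U}\Omega$ together with $\Omega^*\hat{U}(I-\Omega\Omega^*)=0$) requires $\hat{U}\ker\Omega^*\subseteq\ker\Omega^*$, i.e.\ $\hat{U}^*(\mathrm{ran}\,\Omega)\subseteq\mathrm{ran}\,\Omega$, and your own analysis shows that the commutation relations $(\star)$ only place $\Omega^*\hat{U}g$ in $\bigcap_{k}\ker(BD^{k})$, which need not be trivial. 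The paragraph that is supposed to close this — ``exhibit a preimage in $\clf$ for $\hat{U}^*\Omega f$ by exploiting the remaining colligation identities \dots and the Beurling--Lax type structure of $\mathrm{ran}\,\Omega$'' — is a statement of intent rather than an argument; moreover, the preimage one must exhibit is precisely $Y^*f$ (since $U^*C^*D^{*k}f=C^*D^{*k}Y^*f$ once $Y$ exists with $YC=CU$ and $DY=YD$), so as described the step is essentially circular. Everything else in the outline (contractivity of $Y$, the three intertwinings, the Wold reduction in the finite-dimensional case, which is fine and matches the paper) sits downstream of this one unproved claim.

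For comparison, the paper sidesteps the kernel-invariance formulation entirely. It uses the same surjectivity fact (in your language, that $\Omega^*$ is onto, i.e.\ $\clf=\bigvee_{k\ge 0}D^kC(\cle)$ when $D$ is pure, proved from $CC^*+DD^*=I_{\clf}$), defines $Y(D^kC\eta)=D^kCU\eta$ directly on this spanning set, and establishes well-definedness and contractivity in one stroke by verifying that the prescription preserves all inner products, $\langle D^kC\eta,D^lC\eta'\rangle=\langle D^kCU\eta,D^lCU\eta'\rangle$. That verification is a concrete computation using the remaining unitarity relations $A^*A+C^*C=I_{\cle}$, $B^*B+D^*D=I_{\clf}$, $B^*A+D^*C=0$, the identities $(\star)$, and the fact that $U$ is an isometry; it is the technical heart of the proof and exactly the content your proposal defers. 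If you wish to keep the $\Omega$-based formulation you will still need this computation (it is what ultimately yields $\hat{U}^*\mathrm{ran}\,\Omega\subseteq\mathrm{ran}\,\Omega$), so it should be carried out explicitly.
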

%\begin{Remark}
    %If $D$ on $\clf$ is a pure contraction, then $D$ satisfies the above condition. 
%\end{Remark}

\begin{proof}
Using the power series expansion of $\tau_{W}(z)$, it is easy to see that the identity in $(1)$ 
%M_{\tau_{W}} (I_{H^2(\D)}\otimes U)=(I_{H^2(\D)}\otimes U) M_{\tau_W}$ 
is equivalent to 
\begin{align}\label{equivalent identities}
AU=UA \text{ and } (B D^{k-1} C)U=U(B D^{k-1} C)\quad  (k\geq 1). 
\end{align}
Now we assume that $(2)$ holds. 
%First, let us assume that there exists 
%a contraction $Y$ on $\clh$ such that $W \Tilde{U}=\Tilde{U} W$, where $\Tilde{U}=\begin{bmatrix}
%U& 0\\
%0& Y
%\end{bmatrix}$. 
Then by a routine block matrix multiplication,
%\begin{align*}
 %   \begin{bmatrix}
%A& B\\
%C& D
%\end{bmatrix} \begin{bmatrix}
%U& 0\\
%0& Y
%\end{bmatrix}&=\begin{bmatrix}
%U& 0\\
%0& Y
%\end{bmatrix}\begin{bmatrix}
%A& B\\
%C& D
%\end{bmatrix}\\
%\begin{bmatrix}
%AU& BY\\
%CU& DY
%\end{bmatrix}&=\begin{bmatrix}
%U A& UB\\
%YC& Y D
%\end{bmatrix}.
%\end{align*}
$AU=U A$, $BY=UB$, $CU=YC$, and $DY=YD$. 
Thus, $AU=U A$ and  
\begin{align*}
 B D^{k-1} CU= B D^{k-1} YC=B Y D^{k-1} C=U B D^{k-1} C \quad (k\geq 1).  
\end{align*}
Hence, by the observation we made earlier, $M_{\tau_W} (I_{H^2(\D)}\otimes U)=(I_{H^2(\D)}\otimes U) M_{\tau_W}$. This proves $(2) \Longrightarrow (1)$. 

For the proof of the moreover part 
%is divided in the following two cases.
note that 
%\noindent \textbf{Case I:} {\color{blue} Let $D$ has 
%the following 
%the block matrix representation of $D$ with respect to the %decomposition $ \clf_1\oplus\clf_2$ of $\clf$ is 
%\[
%D=\begin{bmatrix}D_1& 0\\
%0& D_2
%\end{bmatrix},
%\] 
%where $D_1:=D|_{\clf_1}$ is cnu and $D_2:=D|_{\clf_2}$ is unitary
%and let the cnu part $D_1$ of $D_1$ be a completely non-coisometry. 
the block matrix representation of $W$ with respect to the decomposition $\cle\oplus \clf_1\oplus\clf_2$ is 
%\iffalse
\[
W=\begin{bmatrix}
A & B|_{\clf_1}& 0\\
P_{\clf_1}C & D_1 & 0\\
0& 0 & D_2
\end{bmatrix}.
\]
Since $W$ is a unitary,
\begin{align*}
     A^* A+ C^* P_{\clf_1} C=I_\cle,  \quad P_{\clf_1} B^* B|_{\clf_1} +D_1^* D_1=I_{\clf_1},
\end{align*}
\[P_{\clf_1} B^* A +D_1^* P_{\clf_1} C=0, \text{ and } P_{\clf_1} C C^*|_{\clf_1}+ D_1 D_1^*=I_{\clf_1}.\]
%and
%\begin{align*}\label{2}
    % A A^*+BB^*=I_\cle,  C C^*+ D D^*=I_\clf, AC^*+ BD^*=0, \text{ and } C A^* +D B^*=0.
%\end{align*}
\textbf{Claim:} 
\[
\bigvee_{k\geq 0} D_1^k P_{\clf_1} C (\cle)= \clf_1.
\]
The proof of the claim follows from \cite[Theorem 4.2]{BallCohen}. However, we supply a proof for the completeness. To this end, it is enough to show that $\cls:=(\clf_1 \ominus \bigvee_{k\geq 0} D_1^k P_{\clf_1} C (\cle))=\{0\}$. 
%First observe that $\bigvee_{k\geq 0} D_{\text{cnu}}^k P_{\clf_1} C (\cle))\subseteq \clf_1$. 
Let $h\in \cls$, that is, $h\in \clf_1$ and $h\perp \bigvee_{k\geq 0} D_1^k P_{\clf_1} C (\cle)$. Therefore, $h \perp D_1^k P_{\clf_1} C \eta$ ($k\geq 0, \eta\in \cle$). In particular, $h \perp P_{\clf_1} C \eta$ ($\eta\in \cle$), that is,
\[\langle h, P_{\clf_1} C \eta \rangle=0 \quad (\eta \in \cle).\]
This implies $h\in \text{ker }C^*$. In other words, $\cls\subseteq \text{ker }C^*$. Now it is easy to see that $\cls$ is a $D^*_1$-invariant subspace and the identity $P_{\clf_1} C C^*+ D_1 D_1^*=I_\clf$ restricted on $\cls$ becomes $D_1 D_1^*|_{\cls}=I_\cls$, which contradicts that $D_1$ is completely non-coisometric. Hence the claim is proved. 

%We should mention here that the pureness of $D$ is used only to prove the above claim. 

We now proceed to construct a contraction $Y$ on $\clf$ such that 
$W\tilde{U}=\tilde{U}W$, that is, 
\[BY= UB, CU=YC, \text{ and } D Y=YD. \]
Since $\clf_1=\bigvee_{k\geq 0} D_1^k P_{\clf_1} C (\cle)$, we define 
\[ Y:\clf_1 \oplus \clf_2 \to \clf_1 \oplus \clf_2,\text{ by } (D_1^k P_{\clf_1} C \eta, f) \mapsto (D_1^k P_{\clf_1} C U \eta, 0)\quad (\eta\in \cle, f\in \clf_2, k\geq 0).\] 
It remains to show that $Y$ is a well-defined contraction on $\clf$ and $BY=UB$ as the identities $CU=YC$ and $D Y=YD$ are immediate from the definition of $Y$. In fact, the reader must have noticed that it is the only way one could define $Y$ with required properties, and therefore $Y$ is unique in this case.
%{\color{red} Rest of the proof is same}
%We now proceed to construct a contraction $Y$ on $\clf$ such that $W\tilde{U}=\tilde{U}W$, that is, \[BY= UB, CU=YC, \text{ and } D Y=YD. \] Since $\clf=\bigvee_{k\geq 0} D^k C (\cle)$, we define \[ Y:\clf\to \clf,\text{ by } D^k C \eta \mapsto D^k C U \eta\quad (\eta\in \cle, k\geq 0).\] 
%It remains to show that $Y$ is a well-defined contraction on $\clf$ and $BY=UB$ as the identities $CU=YC$ and $D Y=YD$ are immediate from the definition of $Y$. In fact, the reader must have noticed that it is the only way one could define $Y$ with required properties, and therefore $Y$ is unique in this case. 
We show below that $Y|_{\clf_1}$ is a unitary. To this end, it is enough to show that 
\[\langle D_1^k P_{\clf_1} C \eta, D_1^l P_{\clf_1} C \eta^\prime\rangle =\langle D_1^k P_{\clf_1} C U \eta, D_1^l P_{\clf_1} C U \eta^\prime \rangle\]
for all $k, l\in\Z_+$ and $\eta, \eta^\prime\in \cle$.
%We first show that $Y$ is well-defined map on $\clf$. Indeed, we prove that $Y$ is an isometry on $\clf$, that is, for $k_1, \ldots, k_l\in \Z_+$ and $h_1,\ldots, h_l\in \cle$,
%\[\|D^{k_1}C h_1+\cdots + D^{k_l}C h_l\|=\|D^{k_1}C Uh_1+\cdots +D^{k_l}CU h_l\|.\]
When $l=k=0$, note that  
\begin{align*}
\langle P_{\clf_1} C \eta, P_{\clf_1} C \eta^\prime \rangle &= \langle \eta,(I_\cle-A^* A) \eta^\prime  \rangle \quad(\text{as }A^* A+ C^* P_{\clf_1} C=I_\cle)\\
    &=\langle U\eta,U\eta^\prime \rangle- \langle UA \eta, UA \eta^\prime \rangle\quad (\text{as } U \text{ is an isometry})\\
    %&=\langle U \eta, U \eta^\prime \rangle- \langle U Ah, U A h \rangle \quad (\text{ as } U \text{ is a unitary})\\
    &=\langle U \eta, U \eta^\prime \rangle- \langle AU \eta,  A U \eta^\prime \rangle \quad (\text{as } AU =UA)\\
   &= \langle P_{\clf_1} CU \eta, P_{\clf_1} CU \eta^\prime\rangle. 
\end{align*}
For the general case we use the following identity repeatedly
%\iffalse
\begin{align}\label{general identity}
\langle D_1^m P_{\clf_1}  C \eta, D_1^n  P_{\clf_1} C \eta^\prime \rangle&=\langle D_1^{m-1}P_{\clf_1} C \eta, D_1^*D_1D_1^{n-1} P_{\clf_1} C \eta^\prime \rangle\nonumber\\
&= \langle D_{1}^{m-1} P_{\clf_1} C\eta, D_{1} ^{n-1}  P_{\clf_1} C \eta^\prime \rangle- \langle B D_{1} ^{m-1} P_{\clf_1} C\eta, B D_{1}^{n-1} P_{\clf_1} C \eta^\prime \rangle,
\end{align}
%
%\fi
as $P_{\clf_1} B^* B|_{\clf_1}+ D_1^* D_1=I_{\clf_1}$,
where $m,n\ge 1$.
Now consider the case when at least one of $l$ and $k$ is non-zero. We assume, without any loss of generality, that $k\geq l$. Then for any $\eta, \eta^\prime\in \cle$, by applying \eqref{general identity} $l$ times, we have
\begin{align*}
    \langle D_1^k P_{\clf_1} C \eta, D_1^l P_{\clf_1} C \eta^\prime \rangle
 %   &=\langle D_1^{k-1} P_{\clf_1} C h, D_1^* D_1 D_1^{l-1} P_{\clf_1} C h^\prime \rangle\\
 %   &= \langle  D^{k-1} C h,(I_\cle-B^* B) D^{l-1} C h^\prime \rangle \,(\text{ as }B^* B+ D^* D=I_\clf)\\
%    &=\langle D^{k-1} Ch, D^{l-1} C h^\prime \rangle- \langle B D^{k-1} Ch, B D^{l-1} C h^\prime \rangle\\
   &=\langle D_1^{k-l} P_{\clf_1} C\eta, P_{\clf_1} C \eta^\prime \rangle- \sum^{l}_{j=1} \langle B D_1^{k-j} P_{\clf_1} C\eta, B D_1^{l-j} P_{\clf_1} C \eta^\prime \rangle \nonumber\\
   &=\langle D_1^{k-l-1} P_{\clf_1} C \eta, D_1^* P_{\clf_1} C \eta^\prime \rangle- \sum^{l}_{j=1} \langle B D_1^{k-j} P_{\clf_1} C\eta, B D_1^{l-j} P_{\clf_1} C \eta^\prime \rangle\nonumber\\
    &=-\langle D_1^{k-l-1} P_{\clf_1} C\eta, B^* A \eta^\prime \rangle- \sum^{l}_{j=1} \langle B D_1^{k-j} P_{\clf_1} C\eta, B D_1^{l-j} P_{\clf_1} C \eta^\prime \rangle \nonumber\\
    &=-\langle B D_1^{k-l-1} P_{\clf_1} C\eta,  A \eta^\prime \rangle- \sum^{l}_{j=1} \langle B D_1^{k-j} P_{\clf_1} C\eta, B D_1^{l-j} P_{\clf_1} C \eta^\prime \rangle.
\end{align*}
In the above identity, replacing $\eta$ and $\eta^\prime$ by $U\eta$ and $U\eta^\prime$, respectively and using \eqref{equivalent identities}, we get
 \begin{align*}   
 \langle D_1^k P_{\clf_1} C U\eta, D_1^l P_{\clf_1} C U\eta^\prime \rangle& =-\langle B D_1^{k-l-1} P_{\clf_1} CU\eta,  A U\eta^\prime \rangle- \sum^{l}_{j=1} \langle  B D_1^{k-j} P_{\clf_1} CU\eta,  B D_1^{l-j} P_{\clf_1} C U\eta^\prime \rangle\\ 
 &=-\langle U B D_1^{k-l-1} P_{\clf_1} C\eta,  UA \eta^\prime \rangle- \sum^{l}_{j=1} \langle U B D_1^{k-j} P_{\clf_1} C\eta, U B D_1^{l-j} P_{\clf_1} C \eta^\prime \rangle\\
     &=-\langle  B D_1^{k-l-1} P_{\clf_1} C \eta,  A \eta^\prime \rangle- \sum^{l}_{j=1} \langle  B D_1^{k-j} P_{\clf_1} C \eta,  B D_1^{l-j} P_{\clf_1} C  \eta^\prime \rangle\\
     &= \langle D_1^k P_{\clf_1} C \eta, D_1^l P_{\clf_1} C  \eta^\prime \rangle.
\end{align*}
%Therefore, the above computations concern that for $k_1, \ldots, k_l\in \Z_+$ and $h_1,\ldots, h_l\in \cle$,
%\begin{align*}
 %\|D^{k_1}C h_1+\cdots + D^{k_l}C h_l\|^2&=\langle D^{k_1}C h_1+\cdots + D^{k_l}C h_l, D^{k_1}C h_1+\cdots + D^{k_l}C h_l \rangle\\
 %&=\sum^l_{i=1}\|D^{k_i}C h_i\|^2+\sum_{k_i\neq k_j}\langle D^{k_i}C h_i,D^{k_j}C h_j\rangle\\
% &=\sum^l_{i=1}\|D^{k_i}CU h_i\|^2+\sum_{k_i\neq k_j}\langle D^{k_i}CU h_i,D^{k_j}C U h_j\rangle \\
% &=\|D^{k_1}C Uh_1+\cdots +D^{k_l}CU h_l\|^2.
%\end{align*}
Thus $Y$ is a unitary on $\clf_1$. From the definition of $Y$, it follows that $Y$ is a contraction.
%Next we show that $Y$ on $\clf$ satisfies $BY= UB$ $CU=YC$, and $D %Y=YD$. 
 Now for $g=\sum_{k=0}^lD_1^k P_{\clf_1} C \eta_k$ ($\eta_k\in \cle$) and $f\in \clf_2$,
\begin{align*}
    B Y (g,f)=B Y \big(\sum_{k=0}^lD_1^k P_{\clf_1} C \eta_k, f \big)&= \big(\sum_{k=0}^l B D_1^k P_{\clf_1} C U \eta_k, 0\big)\\
    & = UB  \big (\sum_{k=0}^l  D_1^k P_{\clf_1} C \eta_k, f\big )\\
    &= U B(g,f). 
\end{align*}
%For all $f\in \bigvee_{k\geq 0} D^k C (\cle)$, $B Y=U B$. Let $f_n \to f$, 
Therefore, $BY =UB$ and this completes the proof.
\end{proof}

The other result that we need appeared in different contexts before (see \cite[Lemma 4.1]{MB} and \cite[Lemma 2.1]{DS}). According to our need, we present a slightly modified version.  
\begin{Lemma}\label{Lifting_id}
Let $(S,T,X)$ be a triple of commuting contractions on $\clh$ such that $T$ is pure, and let $G$ and $Q$ be two contractions on $\clh$ such that $\overline{\text{ran}}\, G\subseteq \clg$ and  $\overline{\text{ran}}\, Q\subseteq \clk$ for some Hilbert spaces $\clg$ and $\clk$. Assume that $\Pi: \clh \to H^2_{\clk}(\D)$, defined by $h\mapsto \sum_{k\ge 0}z^k Q S^* T^{* k} h$, is a bounded operator and
\[W=\begin{bmatrix}
A&B\\
C&D
\end{bmatrix}:\clk \oplus \clg \to\clk \oplus \clg \] 
is a contraction satisfying 
\[ W(Qh, G T^* h)=(Q X^* h, G h)\quad (h\in \clh).\]
Then the transfer function $\Theta(z)=\tau_{W^*}(z)=A^*+C^*z (I-z D^*)^{-1} B^*$ $(z \in \D)$, corresponding to the contraction $W^*$,  satisfies 
\[\Pi X^* = M^*_{\Theta} \Pi.\]
 \end{Lemma}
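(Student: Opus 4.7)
The plan is to extract two block identities from the relation on $W$, use purity of $T$ to turn one of them into a geometric series for $G$, and then match the Fourier coefficients of the two sides.

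\medskip

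First, I will rewrite $W(Qh, GT^*h) = (QX^*h, Gh)$ in block form:
\begin{align*}
AQh + BGT^*h &= QX^*h,\\
CQh + DGT^*h &= Gh.
\end{align*}
Iterating the second identity $N$ times yields $Gh = \sum_{k=0}^{N-1} D^k CQT^{*k}h + D^N GT^{*N}h$. Since $D$ and $G$ are contractions and $T$ is pure, $\|D^N GT^{*N}h\|\leq \|T^{*N}h\|\to 0$, so $Gh = \sum_{k\geq 0} D^k CQT^{*k}h$ in norm. Replacing $h$ by $T^*h$ and applying $B$ yields $BGT^*h = \sum_{j\geq 1} BD^{j-1}CQT^{*j}h$, which together with the first block equation gives the key identity
\[
QX^*h = AQh + \sum_{j\geq 1} BD^{j-1}CQT^{*j}h \qquad (h\in\clh). \quad (\star)
\]

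\medskip

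Next, I will unfold both sides of $\Pi X^* = M_\Theta^*\Pi$ coefficient by coefficient. Expanding $\Theta(z) = A^* + \sum_{k\geq 1} C^*D^{*k-1}B^* z^k$ and computing the adjoint multiplication operator in the standard way gives $(M_\Theta^* g)_m = Ag_m + \sum_{k\geq 1} BD^{k-1}Cg_{m+k}$ for every $g = \sum_m z^m g_m \in H^2_\clk(\D)$. Substituting $g = \Pi h$, whose $m$-th coefficient is $QS^*T^{*m}h$, and setting $\tilde h := S^*T^{*m}h$, one obtains
\[
(M_\Theta^*\Pi h)_m = AQ\tilde h + \sum_{j\geq 1} BD^{j-1}CQT^{*j}\tilde h = QX^*\tilde h
\]
by applying $(\star)$ to $\tilde h$; here we have used $S^*T^{*m+j} = T^{*j}S^*T^{*m}$ to rewrite $QS^*T^{*m+j}h$ as $QT^{*j}\tilde h$. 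Finally, the commutativity of $X$ with $S$ and $T$ gives $QX^*\tilde h = QS^*T^{*m}X^*h = (\Pi X^*h)_m$, which is the required equality of Fourier coefficients.

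\medskip

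No step looks like a genuine obstacle; the lemma is essentially a bookkeeping identity driven by the purity of $T$ (to make the geometric iteration converge) together with the commutativity of $S$, $T$ and $X$. The only point deserving care is the convergence of the series $\sum_{j\geq 1} BD^{j-1}CQT^{*j}\tilde h$, but this is immediate from the companion series for $G\tilde h$: its partial sums converge in norm to $G\tilde h$, so applying the bounded operator $B$ and shifting the index yields a norm-convergent series whose sum is $BGT^*\tilde h$. With this, $(\star)$ is well-defined and the two sides of $\Pi X^* = M_\Theta^*\Pi$ agree coefficient by coefficient on all of $\clh$.
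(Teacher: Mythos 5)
Your proposal is correct and follows essentially the same route as the paper's proof: extract the two block identities from $W(Qh,GT^*h)=(QX^*h,Gh)$, iterate and use purity of $T$ to obtain the key identity $QX^*=AQ+\sum_{k\ge 0}BD^{k}CQT^{*(k+1)}$, and then verify $\Pi X^*=M_\Theta^*\Pi$ coefficientwise using the commutativity of $S$, $T$, $X$. The only cosmetic difference is that you first sum the geometric series for $Gh$ and then substitute, while the paper carries the remainder term $BD^mGT^{*(m+1)}h$ through the iteration before letting it vanish.
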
 

\begin{proof}
Since
\begin{align*}
    \begin{bmatrix}
A& B\\
C& D
\end{bmatrix}\begin{bmatrix}
Qh\\
G T^* h\end{bmatrix}=\begin{bmatrix}
QX^* h\\
G h
\end{bmatrix},
\end{align*}
we have 
 \begin{align}\label{Equ_1}
     A Q h+ B G T^* h=QX^* h, 
      \end{align}
and
 \begin{align}\label{Equ_2}
      C Qh+ D G T^* h=G h.
      \end{align}
Now, replacing $h$ by $T^* h$ in \ref{Equ_2}, we have 
 \begin{align*}
      G T^* h=C Q T^*h+ D G T^{* 2} h.
      \end{align*}
    Then, substituting $G T^*h$ by $C Q T^*h+ D G T^{* 2} h$  in \ref{Equ_1}, we obtain
\begin{align*}
      QX^* h =A Qh+ BC Q T^*h+ B D G T^{* 2} h. 
      \end{align*}
 By repeating the procedure $m$ times, we get     
\begin{align*}
     Q X^* h=A Qh+\sum^{m-1}_{k=0} B D^{ k} C Q T^{* k+1}h+ B D^m G T^{* m+1} h.
 \end{align*}
Since $T$ is pure, for all $h\in\clh$, $\|B D^m G T^{* m+1} h\|\leq \|T^{* m+1} h\|\to 0$ as $m\to \infty$. Therefore,
\begin{align}\label{Id_for_inter}
   Q X^*=A Q+\sum^{\infty}_{k=0} B D^{ k} C Q T^{* k+1}.
 \end{align}
Now, for $ k\in \Z_+, \eta\in\cle, h\in \clh$,
\begin{align*}
    \langle M_{\Theta}^* \Pi h, z^k \eta\rangle &= \langle \sum_{m\ge 0}z^m Q S^* T^{* m}h,  (A^*+C^*z (I-z D^*)^{-1} B^*)z^k \eta\rangle\\
    &= \langle \sum_{m\ge 0}z^m Q S^* T^{* m}h,  (A^*+ \sum_{m\ge 0}z^{m+1 }C^* D^{* m} B^*)z^k \eta\rangle\\
     &= \langle A Q S^* T^{* k}h, \eta \rangle + \sum_{m\ge 0} \langle B D^ m C Q S^* T^{* m+k+1}h, \eta\rangle\\
    &= \langle \big(A Q  + \sum_{m\ge 0} B D^ m C Q  T^{* m+1}\big) S^* T^{* k}h, \eta\rangle\\
     &= \langle Q  X^* S^* T^{* k}h, \eta\rangle \quad (\text{by the identity \ref{Id_for_inter}}),
\end{align*}
and on the other hand,
\begin{align*}
    \langle  \Pi X^* h, z^k \eta\rangle = \langle \sum_{m\ge 0}z^m Q S^* T^{* m} X^* h,  z^k \eta\rangle=\langle Q  X^* S^* T^{* k } h,  \eta\rangle.
\end{align*}
Hence, $\Pi X^* = M^*_{\Theta} \Pi$. This completes the proof.
\end{proof}

For $\alpha=(\alpha_1,\ldots, \alpha_n)\in \Z^n$, we define $\alpha_+:=(\alpha_1^+,\ldots, \alpha^+_n)\in\Z^n_+$ and $\alpha_-:=(\alpha_1^-,\ldots, \alpha^-_n)\in\Z^n_+$, where $\alpha_i^+=\text{max}\,\{\alpha_i, 0\}$ and $\alpha_i^-=\text{max}\,\{-\alpha_i, 0\}$ ($i=1,\ldots,n$). We are now ready to prove the commutant lifting theorem. 
\begin{Theorem}\label{Comm_lift_theo}
Let $T=(T_1,\ldots, T_n)\in \mathfrak{B}^n_0(\clh)$.  Suppose that the isometric dilation of $T$ is given by the BCL $n$-tuple $M=(M_{\Phi_1}, \ldots,  M_{\Phi_n})\in \mathfrak{B}^n_0(H^2_{\cle}(\D))$ via the dilation map $\Pi: \clh \to H^2_\cle(\D)$ for some Hilbert space $\cle$, as in Theorem \ref{Dila_infi}.      %
%the Hilbert space $\cle$, the dilation map $\Pi: \clh \to H^2_\cle(\D)$, and the BCL $n$-tuple $M=(M_{\Phi_1}, \ldots,  M_{\Phi_n})\in \mathfrak{B}^n_0(H^2_{\cle}(\D))$  are as in Theorem \ref{Dila_infi}.
If $X$ is a contraction in $\{T\}^\prime$, then there exists a contractive multiplier $\Theta \in H^\infty_{\clb(\cle)}(\D)$ such that $M_{\Theta}\in \{M\}^\prime$ and $\Pi X^*= M^*_{\Theta} \Pi$. 
%(That is, $M_\Theta$ is a lifting of $X$).
\end{Theorem}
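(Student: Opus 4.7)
The plan is to reduce, via the block structure of the BCL dilation, to a transfer-function construction carried out on each block separately. Applying Theorem~\ref{Nessry_cond} I write $I_\clh - XX^* = G_1 + \cdots + G_n$ with $T_i G_j T_i^* = G_j$ for every $i \ne j$. An immediate consequence is the identity
\[
D_{T_i}^2 - X D_{T_i}^2 X^* = (I-XX^*) - T_i (I-XX^*) T_i^* = G_i - T_i G_i T_i^* = G_i - P_T G_i P_T^*,
\]
where the last equality propagates through the product using $T_j G_i T_j^* = G_i$ for $j \ne i$. This identity is the algebraic heart of the argument. Decomposing $\Pi = \Pi_1 \oplus \cdots \oplus \Pi_n$ along $H^2_\cle(\D) = \bigoplus_i H^2_{\clk_i}(\D)$ one has $\Pi_i h = \sum_{k \ge 0} z^k D_{T_i} T_1^* \cdots T_{i-1}^* P_T^{*k} h$. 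Since every element of $\{M\}'$ is block diagonal (Proposition~\ref{Form of commutant}), I look for $\Theta = \bigoplus_{i=1}^n \Theta_i$ with each $\Theta_i \in H^\infty_{\clb(\clk_i)}(\D)$ contractive; then $\Pi X^* = M_\Theta^* \Pi$ decouples into $\Pi_i X^* = M_{\Theta_i}^* \Pi_i$, and by \eqref{BCL_Matrix} the commutation of $M_\Theta$ with every $M_{\Phi_j}$ reduces to commutation of $\Theta_i(z)$ with $U_j^{(i)}$ on $\clk_i$ for $j \ne i$ (the case $j = i$ is automatic from $U_1^{(i)} \cdots U_n^{(i)} = I_{\clk_i}$).

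For each fixed $i$ I would apply Lemma~\ref{Lifting_id} with $(S,T,X) = (T_1 \cdots T_{i-1}, P_T, X)$, $Q = D_{T_i}$, and $G = G_i^{1/2}$; the pureness hypothesis on $P_T$ is part of the assumption $T \in \mathfrak{B}^n_0(\clh)$. The identity above supplies the exact equality
\[
\|D_{T_i} h\|^2 + \|G_i^{1/2} P_T^* h\|^2 = \|D_{T_i} X^* h\|^2 + \|G_i^{1/2} h\|^2 \quad (h \in \clh),
\]
so the partial map $(D_{T_i} h, G_i^{1/2} P_T^* h) \mapsto (D_{T_i} X^* h, G_i^{1/2} h)$ is an isometry on the subspace $\clm_i := \overline{\text{span}}\{(D_{T_i} h, G_i^{1/2} P_T^* h) : h \in \clh\} \subseteq \clk_i \oplus \clg_i$ for a suitable auxiliary $\clg_i$ containing $\overline{\text{ran}}\,G_i^{1/2}$. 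Extending this partial isometry to a contraction $W_i = \bigl[\begin{smallmatrix} A_i & B_i \\ C_i & D_i \end{smallmatrix}\bigr]$ on $\clk_i \oplus \clg_i$ and setting $\Theta_i(z) := \tau_{W_i^*}(z)$, Lemma~\ref{Lifting_id} produces the intertwining $\Pi_i X^* = M_{\Theta_i}^* \Pi_i$ with $\|M_{\Theta_i}\| \le 1$.

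To enforce commutation of $\Theta_i(z)$ with each $U_j^{(i)}$ ($j \ne i$) I invoke the $(2)\Rightarrow(1)$ implication of Theorem~\ref{comm_lifting}: it suffices to exhibit contractions $Y_j^{(i)}$ on $\clg_i$ with $W_i^* \,\text{diag}(U_j^{(i)}, Y_j^{(i)}) = \text{diag}(U_j^{(i)}, Y_j^{(i)})\, W_i^*$. The natural candidate for $Y_j^{(i)*}$ is the minimal commuting unitary extension to $\clg_i$ of the isometry $G_i^{1/2} h \mapsto G_i^{1/2} T_j^* h$ on $\overline{\text{ran}}\,G_i^{1/2}$, which is well defined precisely because $T_j G_i T_j^* = G_i$. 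On the core subspace $\clm_i$ a direct calculation using $X T_j = T_j X$ and $P_T T_j = T_j P_T$ shows that both $W_i\,\text{diag}(U_j^{(i)*}, Y_j^{(i)*})$ and $\text{diag}(U_j^{(i)*}, Y_j^{(i)*})\,W_i$ send $(D_{T_i} h, G_i^{1/2} P_T^* h)$ to $(D_{T_i} T_j^* X^* h, G_i^{1/2} T_j^* h)$, so the intertwining already holds on $\clm_i$. The principal obstacle I foresee is extending $W_i$ beyond its partial isometric definition on $\clm_i$ to a contraction on all of $\clk_i \oplus \clg_i$ that commutes \emph{simultaneously} with every $\text{diag}(U_j^{(i)*}, Y_j^{(i)*})$, $j \ne i$. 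Because both $\clm_i$ and its image under $W_i$ are invariant under these unitaries, this amounts to a covariant Parrott-type extension on the orthogonal complements, executed after enlarging $\clg_i$ if necessary to absorb the ambient unitary actions, in the spirit of the minimal unitary extensions used in Theorem~\ref{Dila_infi}. Once this is carried out for every $i$, the multiplier $\Theta := \bigoplus_i \Theta_i$ is contractive with $M_\Theta \in \{M\}'$ and $\Pi X^* = M_\Theta^* \Pi$, completing the proof.
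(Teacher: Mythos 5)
Your proposal follows the paper's proof of Theorem~\ref{Comm_lift_theo} step for step: the decomposition $I-XX^*=G_1+\cdots+G_n$ from Theorem~\ref{Nessry_cond}, the identity $D_{T_i}^2+P_TG_iP_T^*=XD_{T_i}^2X^*+G_i$, the block-diagonal reduction via Proposition~\ref{Form of commutant} and \eqref{BCL_Matrix}, the transfer-function realization through Lemma~\ref{Lifting_id}, the reduction of the commutation requirement to Theorem~\ref{comm_lifting}(2)$\Rightarrow$(1), and the choice of $Y_j^{(i)}$ as the minimal unitary extension of $G_i^{1/2}h\mapsto G_i^{1/2}T_j^*h$. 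All of these steps are correct and are exactly the paper's ingredients.

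The one place where you stop short is precisely where the paper does its real work: extending the partial isometry $(D_{T_i}h,\,G_i^{1/2}P_T^*h)\mapsto(D_{T_i}X^*h,\,G_i^{1/2}h)$ from $\clm_i$ to a contraction on all of $\clk_i\oplus\clg_i$ that commutes simultaneously with every $V_j:=\mathrm{diag}(U_j^{(i)},Y_j^{(i)})$, $j\neq i$. Your proposed ``covariant Parrott-type extension'' is not the right tool here --- Parrott's lemma completes a single block corner and gives no control over commutation with a family of $n-1$ unitaries --- and no enlargement of $\clg_i$ beyond the minimal unitary extension you already took is required. The paper's resolution is more elementary: having checked $\Tilde{R}V_j|_{\clm}=V_j\Tilde{R}|_{\clm}$ on the core, it extends $\Tilde{R}$ covariantly to $\Tilde{\clm}:=\bigvee_{\alpha}\Tilde{V}^{*\alpha}(\clm)$ by $\sum_{\alpha}\Tilde{V}^{*\alpha}x_\alpha\mapsto\sum_{\alpha}\Tilde{V}^{*\alpha}\Tilde{R}x_\alpha$, proves well-definedness by an explicit norm computation (rewriting inner products via $\Tilde{V}^{(\beta-\alpha)_+}$ and $\Tilde{V}^{(\beta-\alpha)_-}$ and using the intertwining on $\clm$, which is invariant under the $V_j$), and then simply sets $R\equiv 0$ on $\Tilde{\clm}^{\perp}$; since $\Tilde{\clm}$ reduces each $V_j$, this zero extension already commutes with all of them, and commutation with the remaining $V_i$ follows from $V_i=\prod_{j\neq i}V_j^*$. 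Without this (or an equivalent) argument your proof is incomplete at its crux, so you should supply it explicitly.
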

\begin{proof} 
Since the proof uses the explicit isometric dilation of $T\in \mathfrak{B}^n_0(\clh)$ as constructed in Theorem \ref{Dila_infi}, we first recall some of the necessary ingredients from the proof of Theorem \ref{Dila_infi}.
By Theorem \ref{Dila_infi}, the Hilbert space $\cle$ has the decomposition $\cle=\oplus^n_{j=1}\clk_j$, where $\cld_{T_j}\subseteq \clk_j$ for all $j=1,\dots,n$. The dilation map $\Pi: \clh \to H^2_\cle(\D)$ is an isometry given by 
\[
\Pi h=\sum^\infty_{k=0}  z^k (D_{T_1} P_T^{* k} h, D_{T_2} T^*_1 P_T^{* k} h, \ldots, D_{T_n}T^*_1 \cdots T^*_{n-1} P_T^{* k} h)\quad (h\in\clh).
\]
The dilating BCL $n$-tuple $M$ corresponds to the triple $(\cle,\ut{U},\ut{P})$ where $\ut{U}=(U_1,\dots, U_n)$ and $\ut{P}=(P_1,\dots, P_n)$ are defined as 
\begin{align}
    U^*_i=\bigoplus^n_{j=1} W_j^{(i)}\,\,  \text{and }  P_i (x_1, \ldots, x_n)=(0,\ldots 0,  x_i , 0, \ldots 0)\quad (x_l \in \clk_l, l=1,\ldots,n),
\end{align} 
for some unitaries $W_j^{(i)}: \clk_j \to \clk_j$ satisfying
 \begin{align}\label{Wij}
     W_j^{(i)}(D_{T_j}h)=D_{T_j} T^*_i h\quad ( i\neq j) \text{ and }
     W_j^{(j)}(D_{T_j} P_T^* h)=D_{T_j} T^*_j h 
 \end{align}
 for all $h\in\clh$.
Moreover, for all $j=1,\dots,n$, 
\begin{align}\label{Equ_Wjj}
    W_j^{(j)}=W_j^{(1)*} \cdots W_j^{(j-1) *} W_j^{(j+1) *} \cdots W_j^{(n)*}.
\end{align}
For all $j=1,\ldots,n$, we define 
\[
\Pi_j:\clh \to H^2_{\clk_j}(\D) \text{ by }  h\mapsto \sum^\infty_{k=0}  z^k  D_{T_j}T^*_1 \cdots T^*_{j-1} P_T^{* k}h.
\]
It is now clear that for all $h\in \clh$, $\Pi h=\oplus_{j=1}^n \Pi_jh $.
 Suppose any contractive multiplier $\Theta \in H^\infty_{\clb(\cle)}(\D)$ is in $\{M\}^\prime$. Then, by Proposition \ref{Form of commutant}, $M_\Theta$ has the form $M_{\Theta}=\oplus_{j=1}^n M_{\Theta_j}$, where  $\Theta_j\in H_{\clb(\clk_j)}^\infty (\D)$ for all $j=1,\dots,n$.
 By the explicit form of $M_{\Phi_i}$, as in \ref{BCL_Matrix}, $M_{\Theta}$ commutes with $M_{\Phi_i}$ for all $i=1,\dots,n$ if and only if 
\begin{equation}\label{NS 2}
\Theta_j(z)W_j^{(i)}=W_j^{(i)}\Theta_j(z)\quad (z\in\D, i,j=1,\dots,n).
\end{equation}
 Moreover, $\Pi X^*= M^*_{\Theta} \Pi$ if and only if for all $h\in \clh$
 \begin{equation}\label{NS 1}
 \bigoplus_{j=1}^n \Pi_j X^*h=\bigoplus_{j=1}^n M^*_{\Theta_j} \Pi_jh, \text{ equivalently } \Pi_j X^*=M^*_{\Theta_j} \Pi_j\quad (j=1,\dots,n).
 \end{equation}
Thus by the above discussion, to complete the proof, we need to construct 
$\Theta_j\in H_{\clb(\clk_j)}^\infty (\D)$ for all $j=1,\dots,n$ such that ~\eqref{NS 1} and ~\eqref{NS 2} holds. To this end, for the rest of the proof, we fix $j$ and find an auxiliary Hilbert space $\clg$ and a contraction $R$ on $\clk_j\oplus \clg$ such that $\Theta=\tau_{R^*}$, the transfer function corresponding to $R^*$. Also note that for $\tau_{R^*}$ to satisfy equation ~\eqref{NS 2}, by Theorem~\ref{comm_lifting}, it is sufficient to find contractions $Y_i$ on $\clg$ such that 
\begin{align}\label{Comm_RV}
    R^* \begin{bmatrix}
 W_j^{(i)}& 0\\
 0&Y_i
 \end{bmatrix}= \begin{bmatrix}
 W_j^{(i)}& 0\\
 0&Y_i
 \end{bmatrix} R^* \quad (i=1,\dots,n).
\end{align}
  This is the point of departure to the construction of the space $\clg$ and contractions $R$ and $Y_i$ with required properties. 
 Since $X$ is a contraction in $\{T\}^\prime$, by Theorem \ref{Nessry_cond}, there exist positive operators $G_1, \ldots, G_n$ on $\clh$ such that 
\begin{align}\label{Id_nesry}
  I-X X^*=G_1 +\cdots + G_n \text{ and }  (I-C_{T_i})(G_m)=G_m -T_i G_m T^*_i = 0 \quad (i\neq m).
\end{align} 
Repeated application of the second identity above yields $T_jG_jT_j^*=P_T G_j P_T^*$. Now, 
\begin{align*}
   D^2_{T_j}-X D^2_{T_j} X^*&=I-T_j T_j^* -X(I-T_j T^*_j)X^*\\
   &=(I-C_{T_j})(I-X X^*)\\
   &=(I-C_{T_j})(G_1+\cdots+ G_n)\\
   &=G_j-T_j G_j T^*_j \quad (\text{by \ref{Id_nesry}}).
\end{align*}
A rearrangement of the above identity is 
\begin{align}\label{Id_TX}
    D^2_{T_j}+P_T G_j P_T^*=X D^2_{T_j} X^*+G_j.
\end{align}
We set $\tilde{\clg}:=\overline{\text{ran}}\, G_j^{1/2}$,
 \[\clm:=\{(D_{T_j}h, G_j^{1/2}P_T^*h):h\in\clh\}\subseteq \clk_j\oplus \tilde{\clg}, \text{ and } \cln:=\{(D_{T_j}X^*h, G_j^{1/2}h):h\in\clh\}\subseteq \clk_j\oplus \tilde{\clg}.\]
 Therefore, the identity \ref{Id_TX} induces an isometry $\tilde{R}:\clm \to \cln$, 
% \begin{align}
    % W_j:\{(D_{T_j}h, G^{1/2}_j T^* h): h\in \clh\}\to\{(D_{T_j}X^* h, G^{1/2}_j h): %h\in \clh\},
 %\end{align}
 defined by 
 \begin{align}\label{tilde_R}
      \tilde{R}(D_{T_j}h, G^{1/2}_j P_T^* h)=(D_{T_j}X^* h, G^{1/2}_j h)\quad (h\in \clh).
 \end{align}
  Again, using the identity $G_j=T_i G_j T^*_i$ ($i \neq j$), we define an isometry for each $i\neq j$,
\begin{align}\label{Map_Y_i}
    \Tilde{Y_i}:\Tilde{\clg} \to \Tilde{\clg}, \text{ by } G_j^{1/2}h\mapsto G_j^{1/2} T^*_i h\quad (h\in \clh).
\end{align}
Since $(\Tilde{Y}_1, \ldots, \Tilde{Y}_{j-1}, \Tilde{Y}_{j+1}, \ldots, \Tilde{Y}_n)$ is an $(n-1)$-tuple of commuting isometries on $\Tilde{\clg}$, it has minimal unitary extension $(Y_1, \ldots, Y_{j-1}, Y_{j+1}, \ldots, Y_n)$ on $\clg$, say.  We set %define $Y_j:\clg \to \clg$ as 
\begin{align}\label{Equ_Yj}
    Y_j:=Y_1^* \cdots Y^*_{j-1} Y^*_{j+1} \cdots Y^*_n.
\end{align}
For the simplicity of notations, we set 
\[
V_i:=\begin{bmatrix}
 W_j^{(i)}& 0\\
 0&Y_i
 \end{bmatrix}\quad (i=1,\dots,n).
\]
The auxiliary space $\clg$ and the required contractions $Y_i$'s on $\clg$ are already constructed. To construct the contraction $R$, we take extension of $\tilde{R}$, as in ~\eqref{tilde_R}, in certain way so that ~\eqref{Comm_RV} holds. It follows from ~\eqref{Wij} and ~\eqref{Map_Y_i} that $\clm$ and $\cln$ are invariant subspaces of each of the unitary  $V_i$ on $\clk_j\oplus \clg$
 with $i\neq j$. Also note that for $h\in\clh$ and $i\neq j$,
\begin{align*}
    \Tilde{R}V_i(D_{T_j}h, G_j^{1/2}P_T^*h)=\Tilde{R} (D_{T_j}T^*_i h, G_j^{1/2}T^*_i P_T^*h)= (D_{T_j}T^*_i X^*h, G_j^{1/2}T^*_ih),
\end{align*}
and 
on the other hand 
\begin{align*}
   V_i \Tilde{R} (D_{T_j}h, G_j^{1/2}P_T^*h)= V_i (D_{T_j}X^* h, G_j^{1/2}h)
    =(D_{T_j}T^*_i X^*h, G_j^{1/2}T^*_ih).
\end{align*}
Thus, $ \Tilde{R} V_i|_\clm=V_i \Tilde{R}|_{\clm}$ for all $i\neq j$.
We consider  
\[\Tilde{\clm}:=\bigvee_{\alpha\in \Z^{n-1}_+}\Tilde{V}^{* \alpha}(\clm) \text{ and } \Tilde{\cln}:=\bigvee_{\alpha\in \Z^{n-1}_+}\Tilde{V}^{* \alpha}(\cln),\]
where $\Tilde{V}:=(V_1, \ldots, V_{j-1}, V_{j+1}, \ldots,  V_n)$,
%and $\alpha=(\alpha_1,\ldots, \alpha_{n-1})\in \Z^{n-1}_+$, $\Tilde{U}^{* \alpha}=\Tilde{U}_1^{* \alpha_1}\cdots \Tilde{U}_{j-1}^{* \alpha_{j-1}}\Tilde{U}_{j+1}^{* \alpha_{j}}\cdots \Tilde{U}_n^{* \alpha_{n-1}}$.
%Therefore, $\Tilde{S}_1^{(1)}$ and $\Tilde{S}_1^{(2)}$ are $\Tilde{U}_1$-reducing subspaces.
%{\color{red} We set $\Tilde{U}:=(\Tilde{U}_1, \ldots, \Tilde{U}_n)$}
and extend $\tilde{R}$ to get an operator from $\Tilde{\clm}$ onto $\Tilde{\cln}$ by the following prescription 
\[
\sum_{0\leq \alpha\leq \gamma}\Tilde{V}^{* \alpha} x_\alpha\mapsto \sum_{0\le \alpha\leq \gamma}\Tilde{V}^{* \alpha} \Tilde{R} x_\alpha\quad (x_\alpha\in \clm).
\]
The extension is well-defined as 
\begin{align*}
    \| \sum_{0\le \alpha\leq \gamma}\Tilde{V}^{* \alpha} \Tilde{R} x_\alpha\|^2
%&=\langle  \sum_{0\le \alpha\leq \gamma}\Tilde{V}^{* \alpha} \Tilde{R} x_\alpha,  \sum_{0\le \beta\leq \gamma}\Tilde{V}^{* \beta} \Tilde{R} x_\beta \rangle\\
&=\sum_{0\le \alpha, \beta\le \gamma}\langle \Tilde{V}^{* \alpha} \Tilde{R} x_\alpha, \Tilde{V}^{* \beta} \Tilde{R} x_\beta \rangle\\
%&=\sum_{0\le \beta\le \gamma} \sum_{0\le \alpha \le \gamma }\langle  \Tilde{R} x_\alpha, \Tilde{V}^{ \alpha}\Tilde{V}^{* \beta} \Tilde{R} x_\beta \rangle\\
%&=\sum_{0\le \beta\le \gamma} \sum_{0\le \alpha \le \gamma }\langle  \Tilde{R} x_\alpha, \Tilde{V}^{ \alpha -\beta} \Tilde{R} x_\beta \rangle\\
%&=\sum_{0\le \beta\le \gamma} \sum_{0\le \alpha \le \gamma }\langle  \Tilde{R} x_\alpha, \Tilde{V}^{ (\alpha -\beta)_+} \Tilde{V}^{* (\alpha -\beta)_-}\Tilde{R} x_\beta \rangle\\
&=\sum_{0\le \alpha, \beta\le \gamma} \langle \Tilde{V}^{ (\beta -\alpha)_+} \Tilde{R} x_\alpha, \Tilde{V}^{ (\beta -\alpha)_-} \Tilde{R} x_\beta \rangle\\
&=\sum_{0\le \alpha,\beta\le \gamma}\langle \Tilde{R}\Tilde{V}^{ (\beta -\alpha)_+}  x_\alpha, \Tilde{R}\Tilde{V}^{ (\beta-\alpha)_-}  x_\beta \rangle\quad (\text{as } \Tilde{R} V_i|_\clm=V_i \Tilde{R}|_{\clm}, i\neq j)\\
&=\sum_{0\le \alpha,\beta\le \gamma}\langle \Tilde{V}^{ (\beta -\alpha)_+}  x_\alpha, \Tilde{V}^{ (\beta-\alpha)_-}  x_\beta \rangle\\
%&=\sum_{0\le \beta\le \gamma} \sum_{0\le \alpha \le \gamma }\langle \Tilde{V}^{ (\alpha -\beta)_-}  x_\alpha, \Tilde{V}^{ (\alpha -\beta)_+}  x_\beta \rangle\\
%&=\sum_{0\le \beta\le \gamma} \sum_{0\le \alpha \le \gamma }\langle \Tilde{V}^{* \alpha}  x_\alpha, \Tilde{V}^{* \beta}  x_\beta \rangle\\
&= \| \sum_{0\le \alpha\leq \gamma}\Tilde{V}^{* \alpha} x_\alpha\|^2.
\end{align*}

We denoted the extended operator again by $\Tilde{R}: \tilde{\clm}\to\tilde{\cln}$ and it follows from the definition that 
\[\tilde{R} V_i|_{\Tilde{\clm}}= V_i\tilde{R}|_{\Tilde{\clm}}\quad (i\neq j).\]
Finally, we define the contraction $R$ on $\clk_j\oplus \clg$ by setting  $R= \tilde{R}$ on $\tilde{\clm}$ and $R\equiv 0$ on $\tilde{\clm}^{\perp}$. Since $\tilde{\clm}$ is a reducing subspace for $V_i$ $(i\neq j)$, we have $RV_i=V_iR$ for all $i\neq j$. Also since 
$V_j=V_1^*\cdots V_{j-1}^*V_{j+1}^*\cdots V_n^*$, which can be read off from ~\eqref{Equ_Wjj} and \eqref{Equ_Yj}, it follows that $RV_j=V_jR$. Then by Fuglede-Putnam, we have ~\eqref{Comm_RV}. It only remains to prove that
\[\Pi_j X^*=M^*_{\Theta_j}\Pi_j,\]
where $\Theta_j(z)=\tau_{R^*}(z)=A^*+C^*z (I-z D^*)^{-1} B^*$, transfer function of $R^*$ and
\[R=\begin{bmatrix}
A& B\\
C& D
\end{bmatrix}: \clk_j \oplus \clg\to \clk_j \oplus \clg.\]
This is obtained by applying Lemma~\ref{Lifting_id} for $S:=T_1\cdots T_{j-1}$, $T:= P_T$, $G:=G_j^{1/2}$, $Q:=D_{T_j}$, $\Pi:=\Pi_j$, $W:=R$, and using ~\ref{tilde_R}. Hence the theorem is proved.  
%Therefore, it is enough to show that for $j=1,\ldots,n$,
%\[D_{T_j}T^*_1 \cdots T^*_{j-1} X^*=A D_{T_j}T^*_1 \cdots T^*_{j-1}+\sum^\infty_{k=0} B D^{ k} C D_{T_j}T^*_1 \cdots T^*_{j-1} P_T^{* k+1}.\]
%By the definition of $\Tilde{W}_j$ and using Lemma \ref{Lifting_id}, one can easily show that the above identity holds. Again, with the help of $\Theta_j$ by considering $M_\Theta$ as in \ref{Matrix_form} and using Theorem \ref{comm_lifting}, it is easy to see that $\Pi_V X^*=M^*_{\Theta} \Pi_V$ and $M_\Theta$ commutes with $\underline{M}$. This completes the proof.
\end{proof}

\begin{Remark}
For $T=(T_1,\dots,T_n)\in \mathfrak{B}^n_0(\clh)$ and contractive $X$ in $\{T\}'$, the above theorem shows that $X$ has a contractive lifting in the isometric dilation space of $T$. This in particular implies that one can find a norm preserving lifting of $X$. Indeed, if $\|X\|=1$ then there is nothing to prove. If $0<\|X\| <1$, and if $Y$ is a contractive lifting of $\frac{X}{\|X\|}$ then $\|Y\|=1$ and clearly $\|X\|Y$ is a norm preserving lifting of $X$.    
\end{Remark}

The following result is a particular case of ~\cite[Proposition 9.2]{NF}, we include a proof for the sake of completeness.  
\begin{Proposition}\label{Lifting_pro}
Let $(V_1,\ldots, V_n)$ be an $n$-tuple of commuting isometries on $\clh$, and let $X$ be a contraction on $\clh$ such that $X V_i=V_i X$ for all $i=1,\ldots,n$. Then the tuple $(X, V_1, \ldots, V_n)$ has an isometric dilation on the minimal isometric dilation space of $X$. 
\end{Proposition}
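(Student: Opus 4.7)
The plan is to take $U$ on $\clk$ to be the minimal isometric dilation of $X$, realized so that $\clh\subseteq\clk$ is co-invariant with $U^*|_\clh = X^*$ and $\clk = \bigvee_{m\ge 0}U^m\clh$. I will then construct, for each $i$, an isometry $\tilde V_i$ on $\clk$ extending $V_i$ (so $\tilde V_i h = V_i h$ for $h\in\clh$), commuting with $U$, and such that the $\tilde V_i$'s commute with each other. The resulting $(n+1)$-tuple $(U,\tilde V_1,\ldots,\tilde V_n)$ will then be the required isometric dilation of $(X,V_1,\ldots,V_n)$.

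The defining formula is essentially forced by the requirement $\tilde V_i U = U\tilde V_i$ together with $\tilde V_i|_\clh = V_i$: set
\[
\tilde V_i\Bigl(\sum_j U^{m_j} h_j\Bigr) = \sum_j U^{m_j} V_i h_j
\]
on the dense subspace $\clk_0 := \operatorname{span}\{U^m h : m\ge 0,\; h\in\clh\}$ of $\clk$. The main step is to check that this prescription is well defined and isometric. For $h,h'\in\clh$ the co-extension property gives $\langle U^m h, h'\rangle = \langle h, X^{*m}h'\rangle = \langle X^m h, h'\rangle$, so combining this with the intertwining $V_iX = XV_i$ and the isometry identity $V_i^*V_i = I_\clh$ yields, for $m_j\ge m_l$,
\[
\langle U^{m_j}V_i h_j,\, U^{m_l}V_i h_l\rangle = \langle X^{m_j-m_l}V_i h_j,\, V_i h_l\rangle = \langle X^{m_j-m_l}h_j,\, h_l\rangle = \langle U^{m_j}h_j,\, U^{m_l}h_l\rangle.
\]
Summing over $j,l$ gives $\|\tilde V_i k\| = \|k\|$ for $k\in\clk_0$, so $\tilde V_i$ extends by continuity to an isometry on $\clk$.

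The remaining verifications are immediate from the defining formula. Commutation with $U$ reads $\tilde V_i U(U^m h) = U^{m+1}V_i h = U\tilde V_i(U^m h)$; mutual commutation $\tilde V_i\tilde V_j = \tilde V_j\tilde V_i$ follows from $V_iV_j = V_jV_i$; and the joint dilation identity $X^m V_1^{\alpha_1}\cdots V_n^{\alpha_n} = P_\clh U^m \tilde V_1^{\alpha_1}\cdots\tilde V_n^{\alpha_n}|_\clh$ follows because $\tilde V_i|_\clh = V_i$ (so $\clh$ is in fact $\tilde V_i$-invariant) combined with $P_\clh U^m|_\clh = X^m$.

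The only point that requires real care is the isometric estimate above, and this is precisely where the assumption that each $V_i$ is an isometry (and not merely a contraction commuting with $X$) is used essentially: the identity $V_i^*V_i = I_\clh$ converts what would otherwise be the contractive estimate of the Sz.-Nagy--Foias commutant lifting theorem into an equality, yielding an isometric rather than merely contractive lift.
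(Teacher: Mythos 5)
Your construction is exactly the one the paper uses: pass to the minimal isometric dilation $U$ of $X$, define $\tilde V_i$ on $\operatorname{span}\{U^m h\}$ by $U^m h\mapsto U^m V_i h$, and verify well-definedness/isometry via the inner-product identity $\langle U^{k}V_ih,U^{l}V_ih'\rangle=\langle X^{k-l}V_ih,V_ih'\rangle=\langle X^{k-l}h,h'\rangle$ using $XV_i=V_iX$ and $V_i^*V_i=I$. The argument is correct and coincides with the paper's proof, with the routine commutation and dilation identities spelled out slightly more explicitly.
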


\begin{proof}
Since $X$ is a contraction, suppose $Y$ on $\clk$ is the minimal isometric dilation of $X$. That is,
\[X^k=P_\clh Y^k|_\clh \, (k\in \Z_+) \text{ and } \clk=\bigvee_{k\in \Z_+} Y^k(\clh).\]
We define, for each $i=1,\ldots,n$, $\Tilde{V_i}:\clk \to \clk$ by
\[\Tilde{V}_i(\sum^m_{k=0} Y^k h_k)=\sum^m_{k=0} Y^k V_i h_k \quad (h_k\in \clh).\]
It is now enough to prove that each $\Tilde{V_i}$ is well-defined, which readily follows from the following computation. For any $h, h^\prime\in\clh$ and $k, l\in \Z_+$ with $k\geq l$,
\begin{align*}
    \langle Y^k V_i h, Y^l V_i h^\prime \rangle= \langle Y^{k-l} V_i h, V_i h^\prime \rangle=\langle X^{k-l} V_i h, V_i h^\prime \rangle=\langle V_i X^{k-l}  h, V_i h^\prime \rangle=\langle  X^{k-l}  h, h^\prime \rangle.
\end{align*}
This completes the proof. 
%The last term of above computation is independent of $V_i$. This shows that each $\Tilde{V_i}$ is well-defined isometry on $\clk$. And the commutativity of the tuple $(Y, \Tilde{V_i}, \ldots, \Tilde{V_n})$ follows from the definition each $\Tilde{V_i}$ and the commutativity of $(V_1,\ldots, V_n)$. Hence, the proposition is proved. 
\end{proof}

As a consequence of Theorem \ref{Comm_lift_theo} and Proposition \ref{Lifting_pro}, we have the following dilation result. 
%a class of tuples of commuting contractions such that each of the tuple admits an isometric dilation.
 %We now have the dilation result for certain $n$-tuple of commuting contractions, which is also one of the main results of this paper. 
 \begin{Theorem}\label{main 2}
Let $(T_1,\dots,T_n, T_{n+1})$ be an $(n+1)$-tuple of commuting contractions on $\clh$ such that $(T_1,\dots,T_n)\in \mathfrak{B}^n_0(\clh)$. Then $(T_1,\dots,T_n, T_{n+1})$ has an isometric dilation and consequently, satisfies the von Neumann inequality. 
\end{Theorem}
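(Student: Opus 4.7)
The plan is to chain together two dilation steps using the commutant lifting theorem already established. Since $(T_1,\dots,T_n)\in \mathfrak{B}^n_0(\clh)$, Theorem~\ref{Dila_infi} supplies a Hilbert space $\cle$, an isometry $\Pi\colon\clh\to H^2_{\cle}(\D)$ and a BCL $n$-tuple $M=(M_{\Phi_1},\dots,M_{\Phi_n})\in \mathfrak{B}^n_0(H^2_{\cle}(\D))$ such that $\Pi T_i^*=M_{\Phi_i}^*\Pi$ for $i=1,\dots,n$. Since $T_{n+1}$ is a contraction commuting with each $T_i$, we have $T_{n+1}\in \{T\}'$, and Theorem~\ref{Comm_lift_theo} yields a contractive multiplier $\Theta\in H^\infty_{\clb(\cle)}(\D)$ with $M_\Theta\in \{M\}'$ and
\[
\Pi T_{n+1}^*=M_\Theta^*\Pi.
\]

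At this stage $(M_{\Phi_1},\dots,M_{\Phi_n},M_\Theta)$ is a commuting tuple on $H^2_{\cle}(\D)$ of which the first $n$ entries are isometries and the last one is merely a contraction, and it is a dilation (via $\Pi$) of $(T_1,\dots,T_n,T_{n+1})$. The next step is to upgrade $M_\Theta$ to an isometry while preserving commutation with the $M_{\Phi_i}$'s. This is exactly Proposition~\ref{Lifting_pro} applied to the $n$-tuple of commuting isometries $(M_{\Phi_1},\dots,M_{\Phi_n})$ and the commuting contraction $X:=M_\Theta$: it produces an isometry $V$ on a Hilbert space $\clk$ (the minimal isometric dilation of $M_\Theta$) together with commuting isometries $\tilde{V}_1,\dots,\tilde{V}_n$ on $\clk$ and an isometric embedding $J\colon H^2_{\cle}(\D)\hookrightarrow \clk$ satisfying
\[
J M_{\Phi_i}^{*}=\tilde{V}_i^{*} J\quad(i=1,\dots,n),\qquad J M_\Theta^{*}=V^{*} J.
\]

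Composing the two dilations via the isometry $J\Pi\colon \clh\to \clk$, we obtain
\[
(J\Pi)T_i^{*}=\tilde{V}_i^{*}(J\Pi)\ (i=1,\dots,n),\qquad (J\Pi)T_{n+1}^{*}=V^{*}(J\Pi),
\]
so $(\tilde{V}_1,\dots,\tilde{V}_n,V)$ is an $(n{+}1)$-tuple of commuting isometries on $\clk$ that co-extends $(T_1,\dots,T_n,T_{n+1})$; this is the desired isometric dilation. The von Neumann inequality on $\overline{\D}^{n+1}$ is then an immediate consequence of the existence of an isometric dilation, by the usual spectral theorem / functional calculus argument applied to the commuting unitary extension of $(\tilde{V}_1,\dots,\tilde{V}_n,V)$.

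There is no real obstacle here beyond correctly invoking the two ingredients: the bulk of the work has already been paid for in Theorem~\ref{Comm_lift_theo}, whose proof provides commutativity and norm control of $M_\Theta$ with the BCL tuple, and in Proposition~\ref{Lifting_pro}, which handles the (classical) lifting of a commuting isometric tuple together with a contraction to the minimal isometric dilation space of the contraction. The only point requiring mild care is verifying commutativity of the final $(n{+}1)$-tuple on $\clk$, which follows automatically from Proposition~\ref{Lifting_pro} because $M_\Theta$ commutes with each $M_{\Phi_i}$.
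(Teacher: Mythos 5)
Your overall route is exactly the paper's: the authors give no further argument for Theorem~\ref{main 2} beyond declaring it a consequence of Theorem~\ref{Comm_lift_theo} and Proposition~\ref{Lifting_pro}, and you invoke precisely those two results in the intended order (dilate $(T_1,\dots,T_n)$ to a BCL tuple, lift $T_{n+1}$ to $M_\Theta$, then apply Proposition~\ref{Lifting_pro} to $(M_\Theta, M_{\Phi_1},\dots,M_{\Phi_n})$).

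There is, however, one incorrect step in your write-up. You assert that Proposition~\ref{Lifting_pro} yields an isometry $J$ with $JM_{\Phi_i}^*=\tilde V_i^*J$, i.e.\ that the final tuple is a joint co-extension. It is not: in the proof of Proposition~\ref{Lifting_pro} the operators $\tilde V_i$ are defined by $\tilde V_i(\sum_k Y^k h_k)=\sum_k Y^k V_i h_k$, so $\tilde V_i$ \emph{extends} $V_i=M_{\Phi_i}$ (the subspace $H^2_{\cle}(\D)$ is forward invariant, $\tilde V_i|_{H^2_{\cle}(\D)}=M_{\Phi_i}$), and only $Y$ co-extends $M_\Theta$. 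Consequently $\tilde V_i^*$ need not leave $H^2_{\cle}(\D)$ invariant and the identity $JM_{\Phi_i}^*=\tilde V_i^*J$ fails in general, as does your final claim $(J\Pi)T_i^*=\tilde V_i^*(J\Pi)$. The conclusion is still reachable, but via the weaker (and here sufficient) dilation notion used in the statement of the theorem: for $\alpha\in\Z_+^n$ and $k\in\Z_+$ one checks directly that $\tilde V^{\alpha}$ maps $H^2_{\cle}(\D)$ into itself acting as $M_{\Phi}^{\alpha}$, and $P_{H^2_{\cle}(\D)}Y^k|_{H^2_{\cle}(\D)}=M_\Theta^k$, whence
\[
P_{H^2_{\cle}(\D)}\,\tilde V^{\alpha}Y^{k}\big|_{H^2_{\cle}(\D)}=M_{\Phi}^{\alpha}M_{\Theta}^{k},
\]
and composing with the genuine co-extension $\Pi$ gives $T^{\alpha}T_{n+1}^{k}=\Pi^{*}\tilde V^{\alpha}Y^{k}\Pi$, which is the required power-dilation identity. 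With that correction the argument is complete; the von Neumann inequality then follows as you say, after extending the commuting isometries to commuting unitaries.
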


\newsection{Examples}\label{Sec5}
 In this section, we exhibit structures of tuples in $\mathfrak{B}^n_0(\clh)$, when $\clh$ is finite dimensional. Also we find an example to demonstrate that such a structure theorem does not hold for the general case of infinite dimensional Hilbert space.  We have pointed out earlier that if $(T_1,T_2)$ is a pair of commuting contractions on $\clh$ such that $T_1$ is a co-isometry and $T_2$ is a pure contraction, then $T_1T_2$ is a pure contraction and $\Delta^{(1,1)}_{(T_1,T_2)}=0$, and therefore $(T_1, T_2)\in \mathfrak{B}^2_0(\clh)$. In fact, in the next proposition, we show that pureness of one of the contraction forces the other operators to be co-isometries and vice versa.

\begin{Proposition}\label{pure vs co-isometry}
Let $T=(T_1,\ldots, T_n)\in \mathfrak{B}^n_0(\clh)$. Then $T_i$ is a pure contraction if and only if $T_j$ is  a co-isometry for all $j\neq i$. 
\end{Proposition}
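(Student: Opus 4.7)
The plan is to handle the two directions separately, with both relying on the hypothesis that $P_T = T_1T_2$ is pure and on the operator identity
\[
\mathbb{S}_2^{-1}(T_1,T_2) = I - T_1T_1^* - T_2T_2^* + T_1T_2T_1^*T_2^* = 0,
\]
which can be rewritten in the two equivalent forms
\[
D_{T_1}^2 = T_2 D_{T_1}^2 T_2^*, \qquad D_{T_2}^2 = T_1 D_{T_2}^2 T_1^*.
\]

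For the ``if'' direction, assume $T_2$ is a co-isometry, so $T_2^*$ is an isometry. Then for every $h\in\clh$ and every $k\in\mathbb{Z}_+$,
\[
\|T_1^{*k}h\| = \|T_2^{*k}T_1^{*k}h\| = \|(T_1T_2)^{*k}h\|,
\]
because $T_1$ and $T_2$ commute. Since $P_T = T_1T_2$ is pure by definition of $\mathfrak{B}^2_0(\clh)$, the right-hand side tends to $0$ as $k\to\infty$, so $T_1^{*k}\to 0$ in SOT, i.e.~$T_1$ is pure.

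For the ``only if'' direction, assume $T_1$ is pure. Iterating the identity $D_{T_2}^2 = T_1 D_{T_2}^2 T_1^*$ yields
\[
D_{T_2}^2 = T_1^k D_{T_2}^2 T_1^{*k}\qquad (k\in\mathbb Z_+).
\]
Since $T_1$ is pure, $T_1^{*k}h\to 0$ for every $h\in\clh$, and since $D_{T_2}^2$ is bounded, $D_{T_2}^2 T_1^{*k}h\to 0$ as well. Combined with the uniform bound $\|T_1^k\|\le 1$, a Cauchy--Schwarz argument (or passing to the weak operator topology) gives $T_1^k D_{T_2}^2 T_1^{*k}\to 0$ in WOT. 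Hence $D_{T_2}^2 = 0$, i.e.~$T_2$ is a co-isometry.

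Neither direction appears to present a real obstacle; the essential input is the identity $D_{T_2}^2 = T_1 D_{T_2}^2 T_1^*$ (and its symmetric counterpart), which is an immediate consequence of the defining relation $\mathbb{S}_2^{-1}(T_1,T_2)=0$, together with the fact that pureness of $P_T=T_1T_2$ is built into the definition of $\mathfrak{B}^2_0(\clh)$.
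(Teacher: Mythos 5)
Your proof is correct and follows essentially the same route as the paper: the forward direction iterates $D_{T_2}^2 = T_1 D_{T_2}^2 T_1^*$ (a rewriting of $\mathbb{S}_2^{-1}(T_1,T_2)=0$) and uses pureness of $T_1$ to force $D_{T_2}^2=0$, while the converse uses the co-isometry property of $T_2$ to dominate $\|T_1^{*k}h\|$ by $\|(T_1T_2)^{*k}h\|$ and then invokes pureness of the product. The only cosmetic difference is that the paper inserts $T_2^kT_2^{*k}=I$ and estimates, whereas you use directly that $T_2^*$ is an isometry; both are the same computation.
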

\begin{proof}
Without of loss generality, let $T_1$ be a pure contraction. Since $(T_1,\ldots, T_n)\in \mathfrak{B}^n_0(\clh)$, then, for $i=2, \ldots, n$, $\Delta^{(1,1)}_{(T_1,T_i)}=D^2_{T_i}-T_1 D^2_{T_i} T^*_1=0$, where $D^2_{T_i}=I-T_i T^*_i$. Iterating the identity $k$-times, we have
\[D^2_{T_i}=T^k_1 D^2_{T_i} T^{*k}_1\quad (k\in \Z_+). 
\]
Using pureness of $T_1$, we obtain $D^2_{T_i}=0$, and therefore, $T_i$ is a co-isometry. Conversely, let $T_2, \ldots, T_n $ are  co-isometries. Since $P_T$ is a pure contraction, for $h\in\clh$,
\[\|T^{* k}_1 h\|=\|T^{k}_2 \cdots T^{k}_n  T^{* k}_n \cdots T^{* k}_2 T^{*k}_1 h\|\leq \| P^{* k}_T h\| \to 0\text{ as } n\to \infty.\]
This completes the proof.  
\end{proof}

%{\color{magenta}It turns out that if $\clh$ is finite dimensional that any pair $(T_1, T_2)\in \mathfrak{B}^2_0(\clh)$ arises in this way.} 

It turns out that if $\clh$ is finite dimensional that any $n$-tuple $(T_1, \ldots, T_n)$ in $\mathfrak{B}^n_0(\clh)$ arises in this way. We first prove it for $n=2$, as it is much simpler, and subsequently we treat the general case. 
%The case $n=2$ is much simpler  To convince the reader, we first prove it for any pair $(T_1, T_2)\in \mathfrak{B}^2_0(\clh)$. Then we prove for any tuple $(T_1, \ldots, T_n)$ in $\mathfrak{B}^n_0(\clh)$.  
We denote by $\mathbb{M}_m(\mathbb{C})$ the space of all $m\times m$ complex matrices.

%{\color{magenta}We prove it in the following proposition. We denote by $\mathbb{M}_m(\mathbb{C})$ the space of all $m\times m$ complex matrices.} 

\begin{Proposition}\label{finite}
Let $(A,B)\in \mathfrak{B}^2_0(\mathbb{C}^m)$. Then there exist
commuting pairs $(U_1, T_1)$ in $\mathbb{M}_k(\mathbb{C})$ and 
$(T_2, U_2)$ in $\mathbb{M}_{m-k}(\mathbb{C})$, where $k\le m$, $U_1$ and $U_2$ are unitary, and $T_1$ and $T_2$ are matrices with no unimodular eigenvalues such that  
\[(A, B)\cong \left(\begin{bmatrix} U_1 & 0\\
0 &  T_2\end{bmatrix}, \begin{bmatrix} T_1 & 0\\
0 &  U_2\end{bmatrix}\right).\]
\end{Proposition}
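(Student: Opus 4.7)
The plan is to use the canonical decomposition of $A$ as a finite-dimensional contraction, transfer it to $B$ via a disjoint-spectra Sylvester argument, and conclude via Proposition~\ref{pure vs co-isometry}. First I would decompose $\clh=\mathbb{C}^n = \clh_1 \oplus \clh_2$ with $A = U_1 \oplus T_2$, where $U_1$ is unitary on the maximal $A$-reducing subspace $\clh_1$ and $T_2$ is completely non-unitary (c.n.u.) on $\clh_2$. A standard observation is that in finite dimension a c.n.u. contraction has spectral radius strictly less than $1$: an eigenvector $x$ of $T_2$ for a unimodular $\lambda$ would satisfy $\|T_2 x\|=\|x\|$, hence $T_2^*T_2 x=x$ and then $T_2^* x=\bar\lambda x$, producing a one-dimensional reducing subspace of $T_2$ on which $T_2$ acts unitarily, contradicting the c.n.u. property. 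Consequently $\sigma(U_1)\subset\mathbb{T}$ and $\sigma(T_2)\subset\mathbb{D}$, so the two spectra are disjoint.

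The key step, which I expect to be the principal obstacle, is to show that $\clh_1$ is also reducing for $B$. Writing $B$ as a $2\times 2$ block matrix with respect to $\clh_1\oplus\clh_2$, the commutation $AB=BA$ yields the two Sylvester equations $U_1 B_{12}=B_{12} T_2$ and $T_2 B_{21}=B_{21} U_1$. Since $\sigma(U_1)\cap\sigma(T_2)=\emptyset$, Rosenblum's theorem forces $B_{12}=B_{21}=0$, and hence $B=T_1\oplus U_2$ with $T_1:=B_{11}$ and $U_2:=B_{22}$. This immediately produces the block form $(A,B)\cong(U_1\oplus T_2,\,T_1\oplus U_2)$ claimed in the proposition.

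It then remains to verify the stated properties of $T_1$ and $U_2$. Since $\clh_1$ and $\clh_2$ are jointly reducing for both $A$ and $B$, each compressed pair $(U_1,T_1)$ on $\clh_1$ and $(T_2,U_2)$ on $\clh_2$ still lies in $\mathfrak{B}_0^2$ of its respective space: commutativity and the vanishing of $\mathbb{S}_2^{-1}$ both pass to jointly reducing subspaces, and the products $U_1T_1$ and $T_2 U_2$ are restrictions of the pure contraction $AB$ to invariant subspaces, hence pure. Applying Proposition~\ref{pure vs co-isometry} to $(T_2,U_2)$: purity of $T_2$ (which follows from $\sigma(T_2)\subset\mathbb{D}$ in finite dimension) forces $U_2$ to be a co-isometry, and in finite dimension a co-isometry is unitary. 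Applying the same proposition to $(U_1,T_1)$: since $U_1$ is in particular a co-isometry, $T_1$ must be pure, so $\sigma(T_1)\subset\mathbb{D}$ and $T_1$ has no unimodular eigenvalues. This completes the plan, with Step 2 --- the disjoint-spectrum transfer --- as the only non-bookkeeping ingredient.
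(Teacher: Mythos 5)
Your argument is correct, but it follows a genuinely different route from the paper's. The paper simultaneously upper-triangularizes $(A^*,B^*)$, evaluates the quadratic form of $\mathbb{S}^{-1}_2(A,B)$ at a joint eigenvector for each diagonal pair $(a_{ii},b_{ii})$ to get $(1-|a_{ii}|^2)(1-|b_{ii}|^2)=0$, uses purity of $AB$ to rule out both moduli being $1$, and then sorts the unimodular eigenvalues into the two blocks (this last rearrangement is left somewhat implicit). You instead start from the canonical unitary/c.n.u.\ decomposition $A=U_1\oplus T_2$, observe that in finite dimension the c.n.u.\ part has spectrum in $\D$ so that $\sigma(U_1)\cap\sigma(T_2)=\emptyset$, and invoke Rosenblum/Sylvester on the off-diagonal blocks of $B$ to force $B=T_1\oplus U_2$; the hypotheses $\mathbb{S}^{-1}_2(A,B)=0$ and purity of $AB$ enter only at the end, through the fact that they pass to jointly reducing subspaces and through Proposition~\ref{pure vs co-isometry} applied to each block. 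Your version buys a fully explicit justification of the splitting (the step the paper compresses into ``rearranging'') and cleanly separates what depends only on commutativity from what depends on membership in $\mathfrak{B}^2_0$; the paper's version buys a direct view of the joint-eigenvalue dichotomy, which is the computation that motivates the class in the first place. All the individual steps you use check out: the equality case $\|T_2x\|=\|x\|$ does yield $T_2^*x=\bar\lambda x$ and hence a one-dimensional unitary reducing subspace, the Sylvester operators are injective because the spectra are disjoint, the restricted pairs do lie in $\mathfrak{B}^2_0$ of the respective subspaces, and the symmetry of the class lets you apply Proposition~\ref{pure vs co-isometry} in both directions.
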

\begin{proof}
Since $A$ and $B$ are commuting matrices, then the pair $(A^*,B^*)$ can be simultaneously upper tringularizable. Suppose that 
\[
(A^*,B^*)\cong \left( \begin{bmatrix}
a_{11}& \cdots & \cdots& a_{1m}\\
0& a_{22}& \cdots & a_{2 m}\\
\vdots & \vdots & \ddots & \vdots\\
0 &0 &  \cdots & a_{mm}
\end{bmatrix}, \begin{bmatrix}
b_{11}& \cdots & \cdots& b_{1m}\\
0& b_{22}& \cdots & b_{2 m}\\
\vdots & \vdots & \ddots & \vdots\\
0 &0 &  \cdots & b_{mm}
\end{bmatrix} \right).
\]
Then the joint eigenvalues of $A^*$ and $B^*$ are given by $\{(a_{ii}, b_{ii}): i=1,\dots,m\}$. Let $v_i$ be joint eigenvector of norm one corresponding to the joint eigenvalues $(a_{ii}, b_{ii})$. Then 
\begin{align*}
0=\langle \Delta^{(1,1)}_{(A,B)}v_i, v_i\rangle =\langle (I-AA^*-BB^*+ABA^*B^*)v_i, v_i\rangle=(1-|a_{ii}|^2)(1-|b_{ii}|^2).
\end{align*}
This shows that either $|a_{ii}|=1$ or $|b_{ii}|=1$. Also note that both $a_{ii}$ and $b_{ii}$ can not have modulus one, as $A^{*n}B^{* n}\to 0$ in SOT as $n\to \infty$. Now the result follows by rearranging all the unimodular eigenvalues of $A^*$ and $B^*$.  
\end{proof}
 Now we prove the result for any $n$-tuple ($n\geq 3$) of matrices in $\mathfrak{B}^n_0({\mathbb{C}^m})$. Let $A=(A_1, \ldots, A_n)\in \mathfrak{B}^n_0(\mathbb{C}^m)$. Since $(A_1, \ldots, A_n)$ is a tuple of commuting  matrices, the tuple $(A_1^*,\ldots, A_n^*)$ is simultaneously upper tringularizable. Let us assume that  
\[
(A_1^*,\ldots, A_n^*)\cong \left( \begin{bmatrix}
a^{(1)}_{11}& \cdots & \cdots& a^{(1)}_{1m}\\
0& a^{(1)}_{22}& \cdots & a^{(1)}_{2 m}\\
\vdots & \vdots & \ddots & \vdots\\
0 &0 &  \cdots & a^{(1)}_{mm}
\end{bmatrix},\ldots, \begin{bmatrix}
a^{(n)}_{11}& \cdots & \cdots& a^{(n)}_{1m}\\
0& a^{(n)}_{22}& \cdots & a^{(n)}_{2 m}\\
\vdots & \vdots & \ddots & \vdots\\
0 &0 &  \cdots & a^{(n)}_{mm}
\end{bmatrix} \right).
\]
Let us fix $i$ ($1\le i\le m$). Since $A_1^{*k}\cdots A_n^{* k}\to 0$ in SOT as $k\to \infty$ and $(a^{(1)}_{ii},\ldots, a^{(n)}_{ii})$ is a joint eigenvalues of $(A_1^*,\ldots, A_n^*)$, $a_{ii}^{(k)}\in \D$ for some $1\le k\le n$.
%one of the joint eigenvalues of $(a^{(1)}_{ii},\ldots, a^{(n)}_{ii})$ is in $\D$.  
%and by Lemma \ref{Breh_0}, $\Delta^{\bm{e}_{1}+\cdots+\bm{e}_{n}}_{A}=0$, 
On the other hand, for $l\neq j$, a similar computation as in the proof of Proposition \ref{finite} reveals that
%\begin{align*}
%0=\langle \Delta^{\bm{e}_{1}+\cdots+\bm{e}_{n}}_{A}v_i, v_i\rangle =(1-%|a^{(1)}_{ii}|^2)\cdots(1-|a^{(n)}_{ii}|^2),
%\end{align*}
% The above identity implies that at least one of the joint eigenvalues $(a^{(1)}_{ii},\ldots, a^{(n)}_{ii})$ is unimodular.
%and without loss of generality, let  $|a^{(1)}_{11}|=1$.  
 \[0=\langle \Delta^{\bm{e}_{l}+\bm{e}_{j}}_{A}v_i, v_i\rangle =(1-|a^{(l)}_{ii}|^2)(1-|a^{(j)}_{ii}|^2) \quad (l\neq j),\]
 where $v_i$ is a joint eigenvector of norm one corresponding to the joint eigenvalue $(a^{(1)}_{ii},\ldots, a^{(n)}_{ii})$ of the tuple $(A^*_1, \ldots, A^*_n)$. The above identity implies that, $a_{ii}^{(k)}\in \D$ for exactly one $k\in \{1,\dots,n\}$ and the rest are unimodular.  
 %in fact, exactly one of the joint eigenvalues $(a^{(1)}_{ii},\ldots, a^{(n)}_{ii})$ is in $\D$, 
 %Hence,
%and again without loss of generality, let $|a^{(2)}_{11}|<1$. 
%It is easy to observe that if one of the joint eigenvalues $(a^{(1)}_{ii},\ldots, a^{(n)}_{ii})$ is in $\D$, then
%rest of the joint eigenvalues $(a^{(1)}_{ii},\ldots, a^{(n)}_{ii})$ are unimodular. %We rearrange all the non-unimodular eigenvalues of $A^*_1, \ldots, A^*_n$ in order. And, it is easy to observe that the tuple $A$ has at most $k$ block matrices with non-unimodular eigenvalues, where $k=\min \{n,m\}$, and if $n>m$, then at least $(n-m)$ matrices of $A$ are unitaries.  
%Without loss of generality, we assume that $k$ block matrices with non-unimodular eigenvalues arise in the first $k$-tuple of $(A_1, \ldots, A_n)$. 
Since this is true for all $i=1, \ldots, m$, it leads to the following result.   

\begin{Proposition}\label{n_tuple_finite}
Let $A=(A_1, \ldots, A_n)\in \mathfrak{B}^n_0(\mathbb{C}^m)$. Set $k=\min \{n,m\}$. Then a rearrangement of the tuple $A$ is unitarily equivalent to a commuting tuple $(B_1, \ldots, B_k, W_1, \ldots, W_{n-k})$, where $B_j$'s and $W_i$'s are block diagonal matrices corresponding to the decomposition $\mathbb C^m=\mathbb C^{m_1}\oplus\cdots\oplus \mathbb C^{m_k}$ and have the form 
%corresponding to a  $m=$, for $j=1, \ldots, k$ and $i=1,\ldots, (n-k)$,
\[B_j=\begin{bmatrix} U^{(1)}_j \\
&\ddots \\
&  & T_j&\\
& & & \ddots &\\
& & & & U^{(k)}_j
\end{bmatrix}  \quad \text{and} \quad W_i= \begin{bmatrix}  V^{(1)}_i\\
&\ddots \\
&  & \ddots&\\
& & & \ddots &\\
& & & & V^{(k)}_i
\end{bmatrix},\]
for some matrices $T_j$'s in $\mathbb{M}_{m_j}(\mathbb{C})$ with no unimodular eigenvalues and for unitaries $U^{(l)}_j$ $(l\neq j)$ and $V^{(l)}_i$ in $\mathbb{M}_{m_l}(\mathbb{C})$.
\end{Proposition}

It is worth mentioning here that a pair of commuting isometries $(V_1,V_2)\in\mathfrak{B}^2_0(\clh) $ also has the form as in Proposition\ref{finite}. Indeed, since $V_1V_2$ is pure then $(V_1,V_2)$ is unitarily equivalent to a BCL pair 
$(M_{\Phi_1}, M_{\Phi_2})\in \mathfrak{B}^2_0(H^2_{\cle}(\D))$ for some co-efficient Hilbert space $\cle$. Then it follows from Proposition~\ref{main_Characterization} and ~\eqref{BCL_Matrix} that $(M_{\Phi_1}, M_{\Phi_2})$ has the required form. However, Proposition~\ref{finite} is not true for arbitrary elements in $\mathfrak{B}^2_0(\clh)$ when $\clh$ is infinite dimensional. We demonstrate this in the following example.

%In fact, we find an example of pair $(T_1, T_2)\in \mathfrak{B}^2_0(\clh)$ below such that there is no subspace $\clq\subseteq\clh$ for which neither $T_1|_{\clq}$ nor $T_2|_{\clq}$ is a co-isometry.  

\textbf{Example:} 
Let $B$ be the bilateral shift on $l^2(\mathbb{Z})$, defined by
\[B(\{\ldots, a_{-2},a_{-1}, \bold{a_0}, a_1, a_2, \ldots\})=\{\ldots, a_{-3},a_{-2}, \bold{a_{-1}}, a_0, a_1, \ldots\}.\] 
Set $\cle:=l^2(\mathbb{Z})\oplus l^2(\mathbb{Z})$. Consider the unitary $U:=B\oplus B$ on $\cle$ and define a projection 
\[P:\cle \to \cle, \text{ by } P((x,y))=(x,0)\quad (x,y\in l^2(\mathbb{Z})).
\]
Let $(M_\Phi, M_\Psi)$ on $H^2_{\cle}(\D)$ be the BCL pair corresponding to $U$ and $P$, that is, 
\[\Phi(z)=(P +zP^\perp)U^*\quad \text{and}\quad \Psi(z)=U(P^\perp+z P)\quad (z\in \D).\]
Since $UP=PU$, it follows from Proposition~\ref{main_Characterization} that  $(M_\Phi, M_\Psi)\in \mathfrak{B}^2_0(H^2_{\cle}(\D))$. 
Moreover, with respect to the decomposition $H^2_{l^2(\Z)}(\D)\oplus H^2_{l^2(\Z)}(\D)$,
 \begin{align}\label{Form matrix M_phi}
     M_{\Phi}=\begin{bmatrix}
I\otimes B^* &0\\
0& M_z \otimes B^*
\end{bmatrix} \text{ and } M_{\Psi}=\begin{bmatrix}
M_z\otimes B &0\\
0& I \otimes B
\end{bmatrix}.
 \end{align}
For a fixed $\lambda_0\in\D$, let us define a subspace $\clq$ of $H^2_{\cle}(\D)$ as follows 
\[\clq:=\bigvee_{k, l\geq 0}M^{* k}_\Phi M^{* l}_\Psi (K_{\lambda_0}\otimes (e_0, e_0)),\]
where $e_0=\{\ldots, 0,\bold{1}, 0,\ldots\}\in l^2(\mathbb{Z})$ and $K_{\lambda_0}$ is the kernel function at $\lambda_0$, that is, 
\[K_{\lambda_0}(z)=\frac{1}{(1-\bar{\lambda}_0z)}\quad (z\in\D).\] 
It is easy to see that $\clq$ is joint $(M_\Phi^*, M_\Psi^*)$ invariant subspace. We define
\[T_1=P_\clq M_\Phi|_\clq \text{ and } T_2=P_\clq M_\Psi|_\clq.\] 
Then $(T_1, T_2)$ is a pair of commuting contractions on $\clq$ with $T_1T_2= P_{\clq}M_z|_{\clq}$. Moreover, since $\Delta^{(1,1)}_{(M_{\phi}. M_{\psi})}=0$, we have $\Delta^{(1,1)}_{(T_1. T_2)}=0$. Therefore, $(T_1,T_2)\in \mathfrak{B}^2_0(\clq)$. We show below that the pair $(T_1, T_2)$ does not have the form as in Proposition~\ref{finite}. We divide the arguments in two cases.

\textbf{Case I:} We claim that neither $T_1$ nor $T_2$ is a co-isometry, equivalently, by Proposition~\ref{pure vs co-isometry} neither $T_1$ nor $T_2$ is a pure contraction. Without any loss of generality, suppose that $T_1$ is a pure contraction. % Therefore, for $n\geq0$ $T_1^{*n}=M^{*n}_{\Phi}|_{\clq}$.
Since $K_{\lambda_0}\otimes (e_0, e_0)\in \clq$, for $n\in \mathbb N$,
\begin{align*}
    \|T_1^{* n}\big(K_{\lambda_0}\otimes (e_0, e_0)\big)\|^2&=\|M^{*n}_\Phi \big(K_{\lambda_0}\otimes (e_0, e_0)\big)\|^2\\
    & =\left\|\begin{bmatrix}
I\otimes B^{n} &0\\
0& M^{*n}_z \otimes B^{n}
\end{bmatrix}\begin{pmatrix}
K_{\lambda_0}\otimes e_0\\
K_{\lambda_0}\otimes e_0
\end{pmatrix}\right\|^2\\
%&=\|\begin{pmatrix}
%I\otimes B^{n}\big(K_{\lambda_0}\otimes (e_0, 0)\big)\\
%M^{*n}_z \otimes B^{n}\big(K_{\lambda_0}\otimes (0, e_0)\big)
%\end{pmatrix}\|^2\\
&=\|K_{\lambda_0}\otimes B^ne_0\|^2+
\|M^{*n}_z K_{\lambda_0}\otimes B^{n} e_0
\|^2.
%\|K_{\lambda_0}\|^2\| (e_0, e_0)\|^2&=\|K_{\lambda_0}\|^2\| e_0\|^2\quad(\text{as  } M_z \text{  is pure })\\
%\| (e_0, e_0)\|^2&=\| e_0\|^2.
\end{align*}
This shows that $\|T_1^{* n}\big(K_{\lambda_0}\otimes (e_0, e_0)\big)\|$ does not converges to $0$ as $n\to \infty$, which is a contradiction. 
 %The above computation implies that $e_0=0$ which contradicts that $e_0=\{\ldots, 0,\bold{1}, 0,\ldots\}\in l^2(\mathbb{Z})$. Therefore, Case I is proved.

\textbf{Case II:} We show that there is no joint $(T_1, T_2)$-reducing subspace $\clq_1$ of $\clq$ such that both $T_1|_{\clq_1}$ and $T_2|_{\clq_2}$ are co-isometry, where $\clq=\clq_1 \oplus \clq_2$. Suppose on the contrary that there exists a joint $(T_1, T_2)$-reducing subspace $\clq_1$ of $\clq$ so that both $T_1|_{\clq_1}$ and $T_2|_{\clq_2}$ are co-isometry. Since an arbitrary element of $\clq$ is of the form $K_{\lambda_0}\otimes (x',y')$ for some $x', y' \in l^2(\mathbb{Z})$ and $\clq_1\subseteq\clq$, suppose that $K_{\lambda_0}\otimes (x, y)\in \clq_1$. We claim that $y=0$. Indeed, for any $n\in\mathbb N$,  
\begin{align*}
   \|K_{\lambda_0}\otimes (x, y)\|^2&= \|T_1^{* n}\big(K_{\lambda_0}\otimes (x, y)\big)\|^2\\
   &=\|M^{*n}_\Phi \big(K_{\lambda_0}\otimes (x, y)\big)\|^2\\
     &=\left\|\begin{bmatrix}
I\otimes B^{n} &0\\
0& M^{*n}_z \otimes B^{n}
\end{bmatrix}\begin{pmatrix}
K_{\lambda_0}\otimes x\\
K_{\lambda_0}\otimes  y
\end{pmatrix}\right\|^2\\
%&=\|\begin{pmatrix}
%I\otimes B^{n}\big(K_{\lambda_0}\otimes (x, 0)\big)\\
%M^{*n}_z \otimes B^{n}\big(K_{\lambda_0}\otimes (0, y)\big)
%\end{pmatrix}\|^2\\
&=\|K_{\lambda_0}\otimes B^nx\|^2+
\|M^{*n}_z K_{\lambda_0}\otimes B^n y
\|^2.
%\|K_{\lambda_0}\|^2\| (x, y)\|^2&=\|K_{\lambda_0}\|^2\| x\|^2\quad(\text{as  } M_z \text{  is pure })\\
%\| (x, y)\|^2&=\| x\|^2.
\end{align*}
Since $M_z$ is a pure isometry, by taking $n\to \infty$ in the above identity we have $\| (x, y)\|^2=\| x\|^2$, and therefore $y=0$. Thus we have shown that 
 $\clq_1 \subseteq H^2_{l^2(\mathbb{Z})\oplus \{0\}}$. By a similar set of arguments, along with the fact that $T_2|_{\clq_2}$ is a co-isometry, we also have $\clq_2 \subseteq H^2_{\{0\}\oplus l^2(\mathbb{Z})}$. Since $\clq=\clq_1\oplus\clq_2$ and $K_{\lambda_0}\otimes (e_0, e_0)\in \clq$, then  $K_{\lambda_0}\otimes (e_0, 0)\in \clq_1$. 
Next we claim that $\clq$ has the following orthogonal decomposition 
\[\clq=\text{Span}\,\{K_{\lambda_0}\otimes (e_0, e_0)\}\oplus \bigvee_{k\neq l}M^{* k}_\Phi M^{* l}_\Psi K_{\lambda_0}\otimes (e_0, e_0).\]  
Indeed, note that for $k=l$, $M_{\Phi}^{* k}M_{\Psi}^{* l}K_{\lambda_0}\otimes (e_0, e_0) = M_z^* K_{\lambda_0}\otimes (e_0, e_0)=  K_{\lambda_0}\otimes \Bar{\lambda}_0(e_0, e_0)$. Thus, we only need to show the orthogonality. Without any loss of generality suppose that $k>l$, then 
\begin{align*}
\langle M_{\Phi}^{* k}M_{\Psi}^{* l}K_{\lambda_0}\otimes (e_0, e_0), K_{\lambda_0}\otimes (e_0, e_0) \rangle & = \langle M_{\Phi}^{* (k-l)} M_z^{* l}
K_{\lambda_0}\otimes (e_0, e_0), K_{\lambda_0}\otimes (e_0, e_0) \rangle\\
& = \langle K_{\lambda_0} \otimes( B^{(k-l)} e_0, \overline{\lambda}_0^{(k-l)}  B^{(k-l)} e_0),
K_{\lambda_0}\otimes \lambda_0^l(e_0, e_0) \rangle\\
&=0.\end{align*}
Here we have used the fact that for $k\in \Z_+$,  $M_\Phi^{* k}K_{\lambda_0}\otimes (e_0, e_0)=K_{\lambda_0} \otimes( B^k e_0, \overline{\lambda}_0^k  B^k e_0)$, which can be computed using \ref{Form matrix M_phi}. This proves the claim. Also by the same computation as we have done above, for $l\neq k$, $\langle M^{* k}_\Phi M^{* l}_\Psi K_{\lambda_0}\otimes (e_0, e_0), K_{\lambda_0}\otimes (e_0, 0) \rangle=0$. Therefore, $K_{\lambda_0}\otimes (e_0, 0)\in \text{Span}\,\{K_{\lambda_0}\otimes (e_0, e_0)\}$, which is a contradiction. 

\vspace{0.1in} \noindent\textbf{Conflict of interest:}
The author states that there is no conflict of interest. No data sets were generated or analyzed during the current study.

\vspace{0.1in} \noindent\textbf{Acknowledgement:}
We thank to the anonymous referee for many helpful suggestions.  The first author is supported by the Mathematical Research Impact Centric Support (MATRICS) grant, File No: MTR/2021/000560, by the Science and Engineering Research Board (SERB), Department of Science \& Technology (DST), Government of India.

\bibliographystyle{amsplain}

\end{document}